\documentclass{article}
\usepackage{iclr2027_conference,times}
\iclrfinalcopy

\usepackage{amsmath,amsfonts,bm}

\def\eqref#1{equation~(\ref{#1})}

\def\plaineqref#1{(\ref{#1})}

\def\1{\bm{1}}

\DeclareMathAlphabet{\mathsfit}{\encodingdefault}{\sfdefault}{m}{sl}
\SetMathAlphabet{\mathsfit}{bold}{\encodingdefault}{\sfdefault}{bx}{n}

\newcommand{\E}{\mathbb{E}}

\newcommand{\R}{\mathbb{R}}

\usepackage{amsmath,amssymb,amsthm,mathtools,bm}
\usepackage{booktabs}
\usepackage{colortbl}
\usepackage{graphicx}
\usepackage{capt-of}
\usepackage{microtype}
\usepackage{algorithm}
\usepackage{algpseudocode}
\usepackage{xspace}
\usepackage{hyperref}
\usepackage{url}
\hypersetup{colorlinks=true,linkcolor=black,citecolor=black,urlcolor=blue}

\newtheorem{theorem}{Theorem}[section]
\newtheorem{lemma}[theorem]{Lemma}

\newtheorem{corollary}[theorem]{Corollary}
\newtheorem{assumption}[theorem]{Assumption}

\newcommand{\St}{\operatorname{St}}
\newcommand{\grad}{\operatorname{grad}}
\newcommand{\sym}{\operatorname{sym}}

\newcommand{\one}{\mathbf 1}
\newcommand{\Xb}{\mathbf X}
\newcommand{\Sb}{\mathbf S}
\newcommand{\Gb}{\mathbf G}
\newcommand{\Wb}{\mathbf W}
\newcommand{\Jb}{\mathbf J}
\newcommand{\Cman}{\mathcal C}
\newcommand{\ip}[2]{\left\langle #1,#2\right\rangle}
\newcommand{\norm}[1]{\left\|#1\right\|}
\newcommand{\plados}{\textsc{Plados}\xspace}

\title{\textsc{PLADOS}: A Parameter-free Landing Algorithm for
Decentralized Optimization on the Stiefel Manifold}
\author{Shu Li\\
Qiuzhen College\\
Tsinghua University\\
Beijing, China\\
\texttt{lishu26@mails.tsinghua.edu.cn}
\And
Jiang Hu\thanks{Corresponding author.}\\
Yau Mathematical Sciences Center\\
Tsinghua University\\
Beijing, China\\
\texttt{jianghu@tsinghua.edu.cn}}

\begin{document}
\maketitle
\lhead{}
\addtocontents{toc}{\protect\setcounter{tocdepth}{-1}}

\begin{abstract}
Decentralized optimization on the Stiefel manifold has broad applications
in machine learning and signal processing.
We propose \plados, a parameter-free landing algorithm for decentralized
optimization on the Stiefel manifold. Each iteration combines local
stochastic gradient updates with a single retraction-free communication step over
the communication graph. A key convex-like property of the intersection of the nonconvex Stiefel manifold and
consensus constraints is characterized through a restricted secant
inequality for the penalty-only scheme, ensuring local contraction of the manifold consensus error without introducing
additional parameters that require tuning. For \(n\) nodes using independent local batches of size \(b\),
we prove that \plados{} achieves linear speedup with respect to the number
of nodes with an asymptotic convergence rate of
\(O(\sigma/\sqrt{nbK}+(n\xi^2)^{1/3}/K^{2/3})\)
after \(K\) iterations, where \(\sigma^2>0\) is the sampling variance
and \(\xi^2\) quantifies gradient heterogeneity.
Experiments demonstrate stability across tested stepsizes and networks, time efficiency,
and linear speedup with respect to the number of nodes.
\end{abstract}

\section{Introduction}
\label{sec:introduction}

Decentralized optimization enables a network of \(n\) nodes to learn from
local data using only neighbor communication
\citep{tsitsiklis1986distributed,nedic2009distributed,duchi2011dual}.
Distributing both data and computation makes this framework useful for
large-scale learning and signal processing. Many such applications also
require orthonormal decision matrices, including principal component
analysis \citep{Chen2021DecentralizedRG}, low-rank estimation
\citep{Deng2025DecenProjRG}, orthogonal representation learning
\citep{Huang2018OWN}, and low-rank adaptation \citep{Zhang2026Landing}.
These problems motivate decentralized stochastic optimization on the
Stiefel manifold \citep[Section~7.3]{Boumal2023Manifolds}:
\begin{equation}
 \min_{X\in\St(p,r)} f(X):=\frac1n\sum_{i=1}^n f_i(X),
 \qquad \St(p,r):=\{X\in\R^{p\times r}:X^\top X=I_r\},
 \label{eq:intro-problem}
\end{equation}
where node \(i\) accesses its local objective through a stochastic oracle
and communicates with its neighbors over a connected graph.

Solving \eqref{eq:intro-problem} requires the nodes to reach consensus
while satisfying a nonconvex orthogonality constraint. Even when all local
matrices have orthonormal columns, their weighted average can leave the
Stiefel manifold. Retractions restore feasibility after each update, but
the required QR or polar factorizations add computation at every node
\citep{Chen2021DecentralizedRG,Deng2025DecenProjRG}. Retraction-free landing methods offer a cheaper way to reduce constraint violation
\citep{ablin2022fast,Zhang2026Landing}. In a decentralized algorithm,
however, landing must work jointly with~communication~and~optimization.

Stochastic gradients and heterogeneous data make this interaction more
delicate. Independent local samples reduce the variance of the averaged
gradient, while differences between local objectives drive the nodes
apart. On the Stiefel manifold, disagreement also affects feasibility and
the descent of the projected network average. A convergence analysis must
therefore control these errors together while preserving the variance
reduction from independent local batches. This is essential for obtaining
linear speedup with respect to the number of nodes, a central benefit
of decentralized stochastic optimization \citep{Lian2017DPSGD,Tang2018D2}.

This raises the question: can a simple retraction-free decentralized
algorithm control both consensus and orthogonality errors and achieve
stochastic linear speedup with respect to the number of nodes? We answer this question
with \plados, a parameter-free landing algorithm that combines a
penalty-only scheme with one communication round and one local stochastic
gradient step per iteration.
To the best of our knowledge, \plados is the first decentralized stochastic
landing method with proven convergence on the Stiefel manifold.

\subsection{Related Work}
\label{subsec:related-work}

\paragraph{Decentralized optimization in Euclidean space.}
Classical consensus and gradient methods laid the foundations
\citep{tsitsiklis1986distributed,nedic2009distributed,duchi2011dual,
yuan2016convergence}. D-PSGD and subsequent analyses establish nonconvex
stochastic linear speedup and characterize topology, compression, and
heterogeneity effects
\citep{Lian2017DPSGD,Koloskova2019DecentralizedSO,Tang2018D2,
alghunaim2022unified,Takezawa2026Topology}.  EXTRA, exact diffusion, NIDS,
and gradient tracking use auxiliary information for exact convergence or
reduced local drift
\citep{shi2015extra,yuan2018exact,Li2019NIDS,Nedic2016AchievingGC,
Qu2017HarnessingSA,Li2019SDIGingAS,pu2021distributed,
koloskova2021improved,xin2021improved,Liu2023DecentralizedGT}.
In D-PSGD, heterogeneity contributes a \(K^{-2/3}\) term, while independent
batches reduce stochastic error \citep{alghunaim2022unified}.
By contrast, \(D^2\) uses a history correction to remove dependence on global
gradient heterogeneity \citep{Tang2018D2}.
Extending these Euclidean techniques to nonconvex manifold constraints is
nontrivial.

\paragraph{Retraction-based decentralized optimization on manifolds.}
Early work studied consensus intrinsically on manifolds
\citep{Sarlette2009ConsensusManifolds,Tron2012RiemannianConsensus}.  On the
Stiefel manifold, DRSGD combines stochastic Riemannian steps with retractions
and multi-step consensus, while DRGTA adds gradient tracking
\citep{Chen2021DecentralizedRG}.  DPRGD, DPRGT, and DPRSRM extend projected
Riemannian techniques to compact embedded submanifolds and stochastic
recursive momentum
\citep{Deng2025DecenProjRG,Deng2025DecenProjRSG}.  Proximal tracking,
Riemannian EXTRA, local-consensus, conjugate-gradient, natural-gradient,
compressed, and quantized variants further broaden this class
\citep{Wang2025DecenPGTRiem,Wu2025RiemannianEC,Hu2025LocalConsensus,
Chen2023DRCG,Hu2025DRNG,Hu2024SingleStepCompression,Chen2025QuantizedRGT}.
Repeated factorizations and vector transport can limit scalability,
especially on GPUs \citep{Song2025EFLanding,Zhang2026Landing}.

\paragraph{Retraction-free optimization on the Stiefel manifold.}
Centralized infeasible, landing, and constraint-dissolving methods avoid
orthonormalization
\citep{gao2019parallelizable,ablin2022fast,Xiao2024DissolvingConstraints,
Zhang2026Landing}. Centrally coordinated EF-Landing combines stochastic
landing with compressed communication \citep{Song2025EFLanding}.
Decentralized examples include the approximate
augmented-Lagrangian method DESTINY, landing with gradient tracking in DRFGT,
and the deterministic primal--dual RF-EXTRA correction
\citep{Wang2022StiefelAL,Sun2024GlobalDecenRFStiefel,Li2026RFEXTRA}.
Stochastic linear speedup with respect to the number of nodes without
gradient tracking remains a key challenge for decentralized methods with
independent mini-batches and heterogeneous objectives.
Table~\ref{tab:method-comparison} summarizes the main properties of
representative methods and \plados.
Penalty selection is another central issue in constrained optimization:
\citet{Byrd1998ActiveSet} asked at the International Congress of Mathematicians 1998 whether sensitivity to penalty
parameters could limit large-scale exact penalty methods. Our \plados answers this question for the considered constrained problem by a landing correction that requires no problem-dependent penalty tuning.

\begin{table}[t]
\caption{Comparison with representative stochastic decentralized and retraction-free
methods.}
\label{tab:method-comparison}
\centering
\scriptsize
\setlength{\tabcolsep}{3pt}
\renewcommand{\arraystretch}{1.15}
\resizebox{\textwidth}{!}{%
\begin{tabular}{lcccccc}
\toprule
\textbf{Method} & \textbf{Decentralized} &
\shortstack{\textbf{Stiefel manifold}\\\textbf{constraints}} &
\shortstack{\textbf{Retraction-}\\\textbf{free}} &
\shortstack{\textbf{Parameter-}\\\textbf{free}} &
\shortstack{\textbf{Heterogeneity}\\\textbf{assumption}} &
\shortstack{\textbf{Linear node}\\\textbf{speedup}} \\
\midrule
EF-Landing~\citep{Song2025EFLanding} & $\times$ & $\checkmark$ & $\checkmark$ & $\times$ & $\times$ & $\checkmark$ \\
Man-LoRA~\citep{Zhang2026Landing} & $\times$ & $\checkmark$ & $\checkmark$ & $\checkmark$ & $\times$ & $\times$ \\
DRSGD~\citep{Chen2021DecentralizedRG} & $\checkmark$ & $\checkmark$ & $\times$ & $\times$ & $\times$ & $\checkmark$ \\
\midrule
\rowcolor{black!7}
\textbf{\plados} & $\checkmark$ & $\checkmark$ & $\checkmark$ & $\checkmark$ & $\checkmark$ & $\checkmark$ \\
\bottomrule
\end{tabular}%
}
\vspace{2pt}

\begin{minipage}{\textwidth}
\scriptsize
\emph{Note.} ``Parameter-free'' means that constraint-related stepsizes or
parameters have explicit values and require no tuning; \plados uses \(\gamma=1/3\).
``Heterogeneity assumption'' records an explicit bound on cross-worker gradient
heterogeneity, not whether heterogeneous data are allowed.
\end{minipage}
\end{table}

\subsection{Contributions}
\label{subsec:contributions}

Our contributions are threefold.
\begin{itemize}
 \item \textbf{A parameter-free landing algorithm.}
 We propose \plados for decentralized optimization on the Stiefel manifold.
 Each iteration combines local stochastic gradient updates with a single
 retraction-free communication step. The landing coefficient is universally
 fixed at \(\gamma=1/3\), independently of unknown problem constants,
 so no additional feasibility parameter requires tuning.

 \item \textbf{Linear convergence of the penalty-only scheme.}
 The feasible set intersects the convex consensus subspace with the
 nonconvex product Stiefel manifold. We establish a restricted secant
 inequality (RSI) for the penalty-only scheme near this intersection,
 yielding local Q-linear convergence of the distance to the feasible set.

 \item \textbf{Linear speedup with respect to the number of nodes.}
 With independent local mini-batches, we prove linear speedup with respect
 to the number of nodes with heterogeneous local data, once
 the stochastic term dominates the heterogeneity and network transients.
 The asymptotic rate is
 \(O(\sigma/\sqrt{nbK}+(n\xi^2)^{1/3}/K^{2/3})\) after \(K\) iterations,
 where \(\sigma^2>0\) is the sampling variance and \(\xi^2\) quantifies
 gradient heterogeneity. This holds for a fixed network or a uniformly
 gapped network family with uniformly bounded \mbox{problem and oracle constants}.
\end{itemize}

\section{Problem Formulation and \plados}
\label{sec:method}

We study decentralized optimization on the Stiefel manifold in
\eqref{eq:intro-problem}.  For a stack
\(\Xb=(X_1,\ldots,X_n)\), define
\[
 \widehat X=\frac1n\sum_iX_i,\qquad
 \bar X=P_{\St}(\widehat X),\qquad
 \bar\Xb=\one\otimes\bar X,
 \qquad d(\Xb)=\norm{\Xb-\bar\Xb}_F.
\]
Thus \(d(\Xb)\) is the distance to the consensus Stiefel manifold
\(\Cman=\{\one\otimes Q:Q\in\St(p,r)\}\).  The projection
\(P_{\St}(Y)=Y(Y^\top Y)^{-1/2}\), defined for full-column-rank \(Y\),
is the nearest-point polar projection \citep{Higham1986Polar}.
For any ambient matrix \(U\), write
\(P_XU=U-X\sym(X^\top U)\), where
\(\sym(A)=(A+A^\top)/2\) for a square matrix \(A\).
At a feasible \(X\), this is the tangent
projection and \(\grad f(X)=P_X\nabla f(X)\)
\citep[Section~7.3]{Boumal2023Manifolds}.

\subsection{Assumptions}

We state the assumptions used in the analysis.
Let \(\mathcal F_k\) denote the history immediately before the time-\(k\)
mini-batches are sampled.
The initialization \(\Xb_0\) is deterministic; its local tube condition is
stated in Theorem~\ref{thm:master}.

\begin{assumption}[Objective regularity and stochastic oracle]
\label{ass:regularity}\label{ass:main}
Each \(f_i\) has a \(C^2\) extension to a neighborhood of a fixed compact
tube around \(\St(p,r)\).  Before iteration \(k\), node \(i\) draws a fresh
mini-batch
\(\mathcal B_{i,k}=\{\zeta_{i,k}^{(1)},\ldots,\zeta_{i,k}^{(b)}\}\)
and forms
\begin{equation}
 S_{i,k}=P_{X_{i,k}}\!\left(
 \frac1b\sum_{\ell=1}^b
 \nabla_X F_i\!\left(X_{i,k};\zeta_{i,k}^{(\ell)}\right)\right),
 \label{eq:minibatch-gradient}
\end{equation}
where \(F_i(\cdot;\zeta)\) is a local sample loss.  Conditional on
\(\mathcal F_k\), the nodewise estimators are independent across nodes and
satisfy
\begin{equation}
 \E\!\left[\frac1b\sum_{\ell=1}^b
 \nabla_XF_i(X_{i,k};\zeta_{i,k}^{(\ell)})
 \,\middle|\,\mathcal F_k\right]=\nabla f_i(X_{i,k}).
\end{equation}
The batch gradient satisfies
\(\|b^{-1}\sum_{\ell=1}^b
\nabla_XF_i(X_{i,k};\zeta_{i,k}^{(\ell)})\|_F\le G\)
almost surely in the tube. Define
\(G_{i,k}:=P_{X_{i,k}}\nabla f_i(X_{i,k})\).
Linearity of \(P_{X_{i,k}}\) gives the first equality below, and we assume
the projected variance bound in the second:
\begin{equation}
 \E[S_{i,k}\mid\mathcal F_k]=G_{i,k},
 \qquad
 \E[\norm{S_{i,k}-G_{i,k}}_F^2\mid\mathcal F_k]\le\frac{\sigma^2}{b}.
 \label{eq:projected-oracle}
\end{equation}
\end{assumption}

The oracle conditions separate the mean gradient from sampling noise.
Independent local batches reduce the variance of the averaged noise to
\(\sigma^2/(nb)\). The almost-sure bound controls tube invariance and
higher-order remainders; smooth finite-sum losses satisfy it on a compact
neighborhood.

\begin{assumption}[Data heterogeneity]
\label{ass:heterogeneity}
For every feasible \(X\),
\begin{equation}
 \frac1n\sum_{i=1}^n\norm{\nabla f_i(X)-\nabla f(X)}_F^2
 \le\xi^2.                                                   
\end{equation}
\end{assumption}

The quantity \(\xi^2\) measures variation among local gradients evaluated
at the same feasible matrix. Smoothness separately controls the effect of
evaluating those gradients at different local iterates.

\begin{assumption}[Communication network]
\label{ass:network}
The nodes communicate through a symmetric doubly stochastic matrix \(W\)
whose graph is connected.  With \(J=\one\one^\top/n\), we have
\(\sigma_2(W):=\norm{W-J}_2<1\); write \(\Wb=W\otimes I_p\) and
\(\Jb=J\otimes I_p\).
\end{assumption}

The spectral gap \(1-\sigma_2(W)\) determines how effectively communication
reduces disagreement between nodes.
These are standard regularity, oracle, heterogeneity, and connectivity
assumptions in decentralized stochastic optimization
\citep{Chen2021DecentralizedRG,Tang2018D2}.

\subsection{The \plados algorithm}

For a coefficient \(\gamma\in(0,1)\), define the landing map
\begin{equation}
 \phi_\gamma(X)=X-\gamma X(X^\top X-I_r).
\end{equation}
Throughout the paper we fix \(\gamma=1/3\), write \(\phi=\phi_{1/3}\), and
set \(\Phi(\Xb)=(\phi(X_1),\ldots,\phi(X_n))\). This universally prescribed
landing coefficient gives the parameter-free feasibility correction.
Section~\ref{sec:theory} proves that it contracts the normal feasibility
error while leaving every feasible point unchanged.

\begin{algorithm}[H]
\caption{\plados}
\label{alg:plods}
\begin{algorithmic}[1]
\Statex \textbf{Input:} communication weights \(w_{ij}\), batch size \(b\),
initial point \(X_{i,0}\), horizon \(K\), and either \(\alpha_0\) or \(\alpha_K\)
\Statex \textbf{Stepsize:} either
\(\alpha_k=\alpha_0/\sqrt{k+1}\) with \(\alpha_0\le\bar\alpha\), or, for a
known horizon, a constant stepsize \(\alpha_k\equiv\alpha_K\le\bar\alpha\)
\For{\(k=0,\ldots,K-1\)}
 \State draw \(\mathcal B_{i,k}=\{\zeta_{i,k}^{(\ell)}\}_{\ell=1}^b\)
 \State compute \(S_{i,k}\) by \eqref{eq:minibatch-gradient}
 \State \(Y_{i,k}=\phi(X_{i,k})
 =X_{i,k}-\frac13X_{i,k}(X_{i,k}^{\top}X_{i,k}-I_r)\)
 \State communicate \(Y_{i,k}\) with neighbors to form \(\sum_jw_{ij}Y_{j,k}\)
 \State \(X_{i,k+1}=\sum_jw_{ij}Y_{j,k}-\alpha_kS_{i,k}\)
\EndFor
\end{algorithmic}
\end{algorithm}

\noindent\begin{minipage}[t]{0.54\textwidth}
\vspace{0pt}
In stacked form, Algorithm~\ref{alg:plods} is
\begin{equation}
 \Xb_{k+1}=\Wb\Phi(\Xb_k)-\alpha_k\Sb_k.
 \label{eq:plods-update}
\end{equation}
One iteration
uses one communication round, one mini-batch gradient, and the products
\(X_i^\top X_i\) and \(X_i(X_i^\top X_i-I_r)\).
The landing map costs \(O(pr^2)\) local arithmetic and consists entirely of
matrix products and additions.
Figure~\ref{fig:mechanism} illustrates the mechanism.
Within the invariant tube, mixing promotes consensus, while landing with
fixed \(\gamma=1/3\) reduces feasibility error. With a diminishing
stepsize, the iterates approach the Stiefel manifold and achieve vanishing
consensus and average stationarity errors under our assumptions.
\end{minipage}\hfill
\begin{minipage}[t]{0.44\textwidth}
\vspace{0pt}
\includegraphics[width=\linewidth,trim=15 15 15 15,clip]{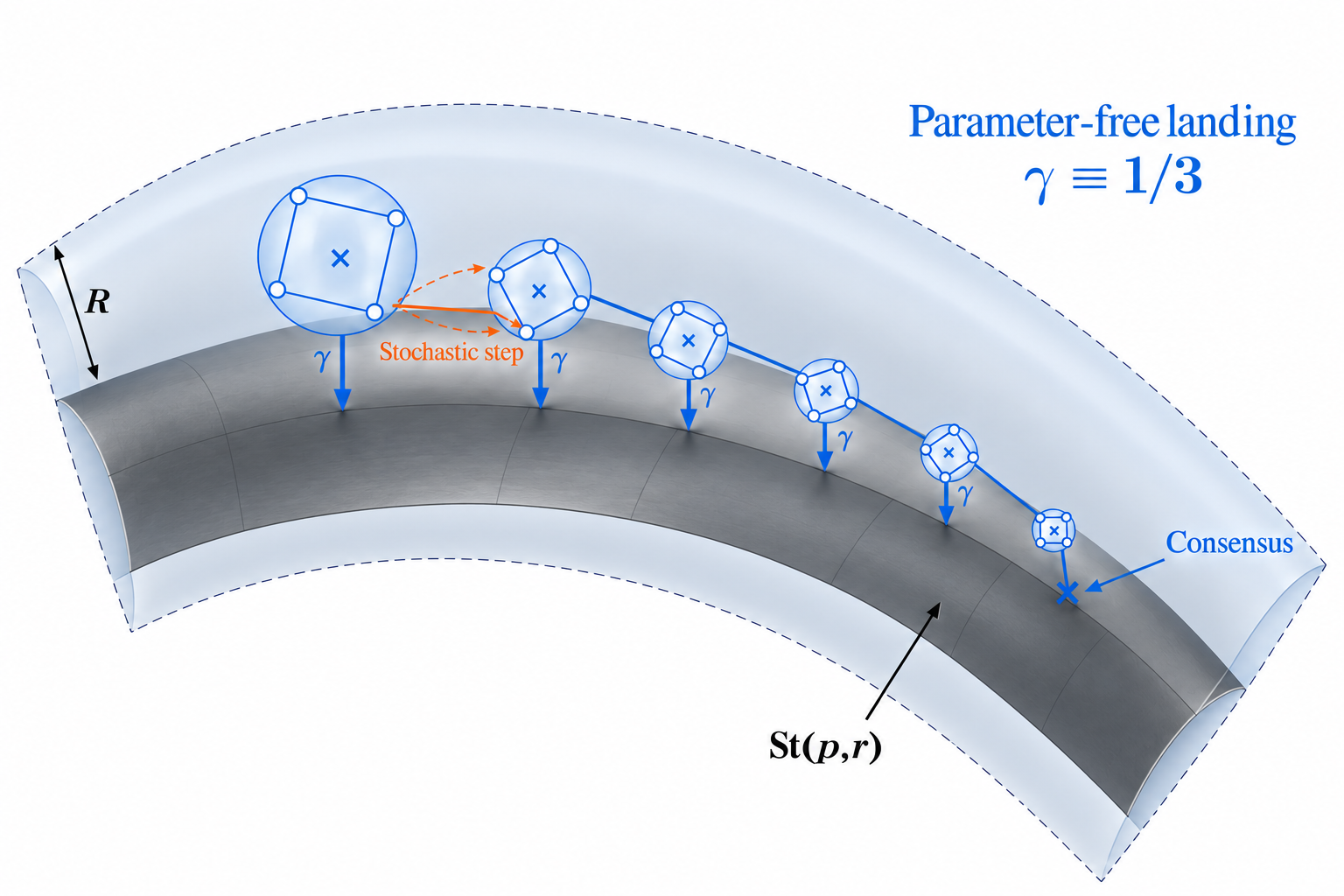}
\setlength{\abovecaptionskip}{2pt}
\captionof{figure}{\small Stochastic landing with \(\gamma=1/3\). Crosses mark network averages.}
\label{fig:mechanism}
\end{minipage}

\section{Convergence Analysis}
\label{sec:theory}

We establish local linear convergence of the penalty-only scheme, then
control stochastic disagreement and objective descent. Together, these
results yield convergence with diminishing stepsizes and linear speedup
with respect to the number of nodes under a horizon-dependent stepsize.
Full proofs and a sufficient stepsize cap appear in
Appendices~\ref{app:geometry}--\ref{app:rates}.

\subsection{Penalty-only scheme and its linear convergence}

The penalty-only scheme \(F(\Xb)=\Wb\Phi(\Xb)\) combines the polynomial
landing map with neighbor communication. We first study how its iteration
reduces the distance to the consensus Stiefel manifold.

Let \(\Psi(\Xb)=\Xb-F(\Xb)\), and let \(\bar\Xb\) be the nearest point to
\(\Xb\) in \(\Cman\).  Set
\(
 \rho_\star:=\max\{\sigma_2(W),1/3\}
\).
The following lemma gives the local geometric property used in the
stochastic analysis.

\begin{lemma}[Restricted secant inequality]
\label{lem:rsi}
For every \(\rho\in(\rho_\star,1)\), there is a radius \(R>0\) such that,
whenever \(d(\Xb)\le R\), the nearest point \(\bar\Xb\in\Cman\) is unique and
\begin{equation}
 \ip{\Psi(\Xb)}{\Xb-\bar\Xb}
 \ge \frac12\norm{\Psi(\Xb)}_F^2
 +\frac{1-\rho^2}{2}d(\Xb)^2.
 \label{eq:rsi}
\end{equation}
\end{lemma}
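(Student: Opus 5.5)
The plan is to rewrite \eqref{eq:rsi} as a local contraction estimate for \(F\) toward \(\bar\Xb\), and then prove that estimate by linearizing \(F\) at \(\bar\Xb\).

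\textbf{Step 1 (nearest point and reduction).} Write \(\Xb=(X_1,\ldots,X_n)\). For any \(Q\in\St(p,r)\) we have \(\norm{\Xb-\one\otimes Q}_F^2=\sum_i\norm{X_i-\widehat X}_F^2+n\norm{\widehat X-Q}_F^2\). Hence the nearest points of \(\Cman\) are \(\one\otimes Q\) with \(Q\) a nearest point of \(\St(p,r)\) to \(\widehat X\). When \(\widehat X\) has full column rank, this \(Q\) is unique and equals \(P_{\St}(\widehat X)\). Let \(\one\otimes Q_0\in\Cman\) attain \(d(\Xb)\). Then \(\norm{\widehat X-Q_0}_F\le d(\Xb)/\sqrt n\), so \(\sigma_{\min}(\widehat X)\ge 1-R/\sqrt n>0\) for \(R<\sqrt n\), which gives uniqueness of \(\bar\Xb\).

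Set \(E=\Xb-\bar\Xb\). Expanding \(\norm{F(\Xb)-\bar\Xb}_F^2=\norm{E-\Psi(\Xb)}_F^2\) shows that \eqref{eq:rsi} is equivalent to
\[
\norm{F(\Xb)-\bar\Xb}_F\le\rho\, d(\Xb).
\]
Every point of \(\Cman\) is fixed by \(F\): \(\phi(Q)=Q\) for \(Q\in\St(p,r)\), and \(\Wb(\one\otimes Q)=\one\otimes Q\). So \(\bar\Xb=F(\bar\Xb)\), and it suffices to show \(\norm{F(\bar\Xb+E)-F(\bar\Xb)}_F\le\rho\norm{E}_F\).

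\textbf{Step 2 (structure of \(E\)).} Decompose \(E=\one\otimes\widehat E+E_\perp\), where \(\widehat E=\widehat X-\bar X\) and \(E_\perp=(I-\Jb)E\) has blockwise mean zero. The two pieces are orthogonal. Since \(\bar X=P_{\St}(\widehat X)\) is a polar factor, \(\widehat X=\bar X H\) with \(H\) symmetric. Therefore \(\widehat E=\bar X S\) with \(S=H-I\) symmetric, i.e.\ \(\widehat E\) lies in the normal space of \(\St(p,r)\) at \(\bar X\).

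\textbf{Step 3 (linearization).} Write \(Q=\bar X\). The differential is \(D\phi(Q)[V]=V-\tfrac13 Q(Q^\top V+V^\top Q)=V-\tfrac23 Q\sym(Q^\top V)\). This map is the identity on the tangent space and multiplication by \(1/3\) on normal directions \(QS\), so it is self-adjoint with \(\norm{L}_2\le1\), where \(L:=D\phi(Q)\). Since every block of \(\bar\Xb\) equals \(Q\),
\[
DF(\bar\Xb)=\Wb\,(I_n\otimes L),
\]
and \(I_n\otimes L\) commutes with \(\Wb\) and with \(\Jb\). Applying this to the two pieces:
\begin{itemize}
\item the consensus normal part maps to \(\one\otimes\tfrac13\widehat E\);
\item \((I_n\otimes L)E_\perp\) remains mean-zero, and \(\Wb\) contracts mean-zero stacks by \(\sigma_2(W)\).
\end{itemize}
The images are again orthogonal, so
\[
\norm{DF(\bar\Xb)E}_F^2\le \tfrac19 n\norm{\widehat E}_F^2+\sigma_2(W)^2\norm{E_\perp}_F^2\le\rho_\star^2\norm{E}_F^2 .
\]

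\textbf{Step 4 (remainder).} Because \(\phi\) is a cubic polynomial and \(\norm{\Wb}_2=1\), on the compact set \(\{d(\Xb)\le 1\}\) we have
\[
\norm{F(\bar\Xb+E)-F(\bar\Xb)-DF(\bar\Xb)E}_F\le C\norm{E}_F^2,
\]
with \(C\) depending only on \(n,p,r\) via the second derivative of \(\phi\) near \(\St(p,r)\). Choosing \(R\le\min\{1,\sqrt n/2,(\rho-\rho_\star)/C\}\) gives \(\norm{F(\Xb)-\bar\Xb}_F\le(\rho_\star+CR)\,d(\Xb)\le\rho\, d(\Xb)\), which is \eqref{eq:rsi} by Step 1.

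\textbf{Main obstacle.} The key point is Step 2. A generic direction \(E\) could have a tangent consensus component, which \(DF\) leaves unchanged with factor \(1\). Optimality of the nearest point \(\bar\Xb\) removes exactly this component, because \(\widehat E=\bar X S\) with \(S\) symmetric is purely normal. This is what produces the constant \(\max\{\sigma_2(W),1/3\}\) and the convex-like behaviour of the scheme.
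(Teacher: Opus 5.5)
Your proposal is correct and follows essentially the paper's route. You reduce the RSI, by expanding the square, to the fixed-representative contraction \(\|F(\Xb)-\bar\Xb\|_F\le\rho\, d(\Xb)\). You then show that \(DF(\bar\Xb)\) contracts normal directions by \(\max\{\sigma_2(W),1/3\}\) via the consensus-normal/zero-mean split, and absorb the nonlinearity on a small tube.

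The differences are minor and only in presentation. You verify uniqueness of \(\bar\Xb\) and normality of \(\Xb-\bar\Xb\) directly, from \(\|\Xb-\one\otimes Q\|_F^2=\sum_i\|X_i-\widehat X\|_F^2+n\|\widehat X-Q\|_F^2\) and the symmetric polar factor, which gives an explicit radius; the paper instead cites the tubular-neighborhood theorem. You also use a second-order Taylor remainder at \(\bar\Xb\), where the paper integrates a perturbed derivative bound along the normal segment.
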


Although \(\Cman\) intersects a convex consensus subspace with a nonconvex
product Stiefel manifold, the RSI gives a convex-like property near the
feasible set: the residual is aligned with the direction from the nearest
feasible point. Expanding \(\|F(\Xb)-\bar\Xb\|_F^2\) then yields the
following linear convergence result with the prescribed \(\gamma=1/3\).

\begin{theorem}[Linear convergence of the penalty-only scheme]
\label{thm:rsi}
\label{thm:penalty}
For every \(\rho\in(\rho_\star,1)\), after possibly decreasing the radius
\(R\) in Lemma~\ref{lem:rsi}, whenever \(d(\Xb)\le R\),
\begin{equation}
 \norm{F(\Xb)-\bar\Xb}_F
 \le\rho\norm{\Xb-\bar\Xb}_F,
 \qquad
 d(F(\Xb))\le\rho d(\Xb).
 \label{eq:penalty-contraction}
\end{equation}
Consequently, the fixed-point iteration
\(\Xb^{t+1}=F(\Xb^t)\), initialized with \(d(\Xb^0)\le R\), obeys
\begin{equation}
 d(\Xb^t)\le\rho^t d(\Xb^0),
 \qquad t=0,1,\ldots.
 \label{eq:penalty-q-linear}
\end{equation}
\end{theorem}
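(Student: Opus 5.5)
The plan is to expand the square in the way the remark before the theorem suggests, and then use Lemma~\ref{lem:rsi} with the same \(\rho\). Since \(F(\Xb)=\Xb-\Psi(\Xb)\),
\[
 \norm{F(\Xb)-\bar\Xb}_F^2
 =\norm{\Xb-\bar\Xb}_F^2-2\ip{\Psi(\Xb)}{\Xb-\bar\Xb}+\norm{\Psi(\Xb)}_F^2 .
\]
Inserting \eqref{eq:rsi}, the \(\norm{\Psi(\Xb)}_F^2\) terms cancel exactly, and we are left with
\[
 \norm{F(\Xb)-\bar\Xb}_F^2\le d(\Xb)^2-(1-\rho^2)d(\Xb)^2=\rho^2d(\Xb)^2 .
\]
This is the first inequality in \eqref{eq:penalty-contraction}, valid on \(d(\Xb)\le R\) with \(R\) from Lemma~\ref{lem:rsi}.

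For the second inequality, note that \(\bar\Xb\in\Cman\), and \(d(\cdot)\) is the distance to \(\Cman\) whenever the nearest point is unique. Hence
\[
 d(F(\Xb))\le\norm{F(\Xb)-\bar\Xb}_F\le\rho\, d(\Xb).
\]
The one point to check is that \(d\), as defined in Section~\ref{sec:method} via the polar projection of the average, really equals the Euclidean distance to \(\Cman\) at the point \(F(\Xb)\). To see this, write \(\Xb=\one\otimes\widehat X+(\Xb-\one\otimes\widehat X)\). For any \(Q\in\St(p,r)\),
\[
 \norm{\Xb-\one\otimes Q}_F^2=\norm{\Xb-\one\otimes\widehat X}_F^2+n\norm{\widehat X-Q}_F^2 ,
\]
so the minimizer over \(Q\) is \(P_{\St}(\widehat X)\). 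This requires \(\widehat X\) to have full column rank, which holds near \(\Cman\). We must therefore ensure that \(F(\Xb)\) stays in the region where this identity holds. Since \(d(F(\Xb))\le\rho R<R\), \(F(\Xb)\) stays in the region, and I decrease \(R\) if needed so that \(d\le R\) implies the average is well conditioned; for instance, \(\sigma_{\min}(\widehat X)\ge 1-R/\sqrt n>0\). Even without uniqueness, the infimum characterization gives \(d(F(\Xb))\le\norm{F(\Xb)-\bar\Xb}_F\), and this is all that is needed. This is where the phrase ``after possibly decreasing \(R\)'' is used.

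Finally, the Q-linear rate \eqref{eq:penalty-q-linear} follows by induction. If \(d(\Xb^t)\le\rho^td(\Xb^0)\le R\), then Lemma~\ref{lem:rsi} and the contraction apply at \(\Xb^t\). This gives \(d(\Xb^{t+1})\le\rho\, d(\Xb^t)\le\rho^{t+1}d(\Xb^0)\le R\), so the iterates never leave the ball on which the RSI holds. The main (mild) obstacle is therefore the bookkeeping that the nearest-point map and the polar-projection formula for \(d\) agree on the whole tube \(\{d\le R\}\). All of the genuine geometric content is already contained in Lemma~\ref{lem:rsi}.
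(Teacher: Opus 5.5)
Your deduction is correct given Lemma~\ref{lem:rsi}. The square expansion, the bound \(d(F(\Xb))\le\norm{F(\Xb)-\bar\Xb}_F\) via the infimum characterization of the distance to \(\Cman\), and the induction all go through, and this is the derivation the main text alludes to.

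The appendix proof runs the logic in the opposite direction. It first proves the fixed-representative contraction directly from Lemma~\ref{lem:normal-derivative}:
\begin{itemize}
\item \(DF_\gamma(\bar\Xb)\) is the identity on \(T_{\bar\Xb}\Cman\).
\item It has norm at most \(\rho_\star(\gamma)=\max\{\sigma_2(W),|1-2\gamma|\}\) on \(N_{\bar\Xb}\Cman\). Landing scales the consensus-normal block \(\one\otimes QA\) by \(1-2\gamma\), and \(W-J\) contracts the zero-average block.
\item This normal bound stays at most \(\rho\) at every point of the tube once \(\gamma(6R+3R^2)\le\rho-\rho_\star(\gamma)\).
\end{itemize}
On a tubular neighborhood, \(F(\bar\Xb)=\bar\Xb\) and \(\Xb-\bar\Xb\in N_{\bar\Xb}\Cman\). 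Integrating \(DF\) along the segment \(\bar\Xb+t(\Xb-\bar\Xb)\) then gives \(\norm{F(\Xb)-\bar\Xb}_F\le\rho\,d(\Xb)\). The RSI is read off from this contraction by your expansion run backwards, and the paper remarks that the two estimates are equivalent on the tube.

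So your route is shorter but contributes no geometric content of its own. In the paper, Lemma~\ref{lem:rsi} is proved jointly with, and from, the contraction; it has no independent proof. Citing the lemma is legitimate given how the statement is phrased. But if you wanted an argument that does not treat the RSI as a black box, the normal-derivative estimate integrated along the normal segment is the step you would have to supply. Your expansion alone would be circular against the paper's actual proof.
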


The local factor \(\rho\) can be any number above
\(\rho_\star=\max\{\sigma_2(W),1/3\}\) on a sufficiently small tube.
Contraction along the normal bundle gives Q-linear decay of the distance
to \(\Cman\), controlling consensus and orthogonality errors together.

\subsection{One-step descent}

We relate feasibility control to objective descent by separating tangent
motion from normal perturbations, then bound the effects of landing,
sampling noise, and disagreement.

\begin{lemma}[Polar projection expansion]
\label{lem:polar-main}
Let \(\bar X=P_{\St}(\widehat X)\), and suppose the singular values of
\(\widehat X\) lie in \([s,S]\subset(0,\infty)\).  For all sufficiently
small \(\Delta\),
\begin{equation}
 P_{\St}(\widehat X+\Delta)-\bar X
 =\mathcal L_{\widehat X}[P_{\bar X}\Delta]
  +\mathcal R_{\widehat X}(\Delta),
 \label{eq:polar-expansion}
\end{equation}
where \(\mathcal L_{\widehat X}=DP_{\St}(\widehat X)\) on
\(T_{\bar X}\St\) is uniformly invertible and
\begin{equation}
 \norm{\mathcal R_{\widehat X}(\Delta)}_F
 \le C_p\norm{P_{\bar X}\Delta}_F\norm{\Delta}_F.
 \label{eq:polar-remainder}
\end{equation}
\end{lemma}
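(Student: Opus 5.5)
The plan is a second-order Taylor expansion of the polar map \(P_{\St}(Y)=Y(Y^\top Y)^{-1/2}\) around \(\widehat X\), combined with the observation that normal perturbations move the projection only at second order. On the compact set \(\mathcal K=\{Y:\sigma(Y)\subset[s/2,2S]\}\), the map \(P_{\St}\) is real-analytic, since \(Y\mapsto (Y^\top Y)^{-1/2}\) is analytic on positive definite matrices with spectrum in \([s^2/4,4S^2]\). Hence \(\|D^2P_{\St}\|\le C_2\) on \(\mathcal K\). For \(\|\Delta\|_F\le s/2\), Weyl's inequality keeps the segment \(\widehat X+t\Delta\) inside \(\mathcal K\), so
\[
P_{\St}(\widehat X+\Delta)-\bar X=DP_{\St}(\widehat X)[\Delta]+\mathcal E(\Delta),\qquad \|\mathcal E(\Delta)\|_F\le \tfrac{C_2}{2}\|\Delta\|_F^2 .
\]
This quadratic remainder is not yet of the required form \(\|P_{\bar X}\Delta\|_F\|\Delta\|_F\). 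Closing that gap is the main obstacle.

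\textbf{Structure of the derivative.} Write the polar decomposition \(\widehat X=\bar X H\) with \(H=(\widehat X^\top\widehat X)^{1/2}\), whose eigenvalues lie in \([s,S]\). Decompose \(\Delta=P_{\bar X}\Delta+\bar X\,\sym(\bar X^\top\Delta)\), where the second piece \(N=\bar X B\) with \(B\) symmetric is the normal part. The claim is that \(P_{\St}(\bar X(H+B))=\bar X\) exactly whenever \(H+B\) is positive definite, because the polar factor of \(\bar X M\) with \(M\succ0\) is \(\bar X\). Therefore \(P_{\St}\) is constant along the affine normal slice \(\widehat X+\bar X\,\mathrm{Sym}\) near \(\widehat X\). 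This gives \(DP_{\St}(\widehat X)[N]=0\) and \(\mathcal L_{\widehat X}:=DP_{\St}(\widehat X)|_{T_{\bar X}\St}\) satisfies \(DP_{\St}(\widehat X)[\Delta]=\mathcal L_{\widehat X}[P_{\bar X}\Delta]\).

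\textbf{Remainder with the product form.} To get the sharper bound, exploit the same invariance. Set \(\Delta_T=P_{\bar X}\Delta\) and \(\Delta_N=\bar X B\), and use \(\|\Delta_N\|_F\le\|\Delta\|_F\). Then
\[
P_{\St}(\widehat X+\Delta)-\bar X=\big[P_{\St}(\widehat X+\Delta_N+\Delta_T)-P_{\St}(\widehat X+\Delta_N)\big],
\]
since the second term equals \(\bar X\) when \(\|B\|\le s/2\). Define \(g(Z)=P_{\St}(Z+\Delta_T)-P_{\St}(Z)\) for \(Z\) in the slice. The mean value theorem gives \(g(Z)=\int_0^1 DP_{\St}(Z+t\Delta_T)[\Delta_T]\,dt\). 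Applying it at \(Z=\widehat X+\Delta_N\) and at \(Z=\widehat X\), we write
\[
\mathcal R_{\widehat X}(\Delta)=\big[g(\widehat X+\Delta_N)-g(\widehat X)\big]+\big[g(\widehat X)-DP_{\St}(\widehat X)[\Delta_T]\big].
\]
The second bracket is at most \(\tfrac{C_2}{2}\|\Delta_T\|_F^2\). The first bracket is at most \(C_2\|\Delta_N\|_F\|\Delta_T\|_F\) by the Lipschitz continuity of \(DP_{\St}\) on \(\mathcal K\). Together these give \(\|\mathcal R\|_F\le C_p\|\Delta_T\|_F\|\Delta\|_F\) with \(C_p=\tfrac32C_2\).

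\textbf{Uniform invertibility of \(\mathcal L_{\widehat X}\).} Differentiating \(\bar X=\widehat X H^{-1}\) in a tangent direction \(\bar X\Omega+\bar X_\perp K\) (with \(\Omega\) skew) gives two components. The \(\bar X_\perp\) block is \(K\mapsto KH^{-1}\). The skew block \(\Omega'\) solves the Sylvester equation \(\Omega' H+H\Omega'=\Omega H+H\Omega\), after symmetrizing appropriately via the identity \(\bar X^\top DP[\cdot]\) skew. In the eigenbasis of \(H\) both maps are diagonal multipliers. The entries are \(1/h_j\) and \((h_i+h_j)^{-1}\)-type ratios bounded between constants depending only on \(s,S\), which yields \(\|\mathcal L_{\widehat X}^{-1}\|\le c(s,S)\) uniformly.
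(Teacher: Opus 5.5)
Your proposal is correct and follows essentially the same route as the paper: the normal-fiber invariance $P_{\St}(\widehat X+\Delta_N)=\bar X$, then integration of $DP_{\St}$ along $\Delta_T$ from the shifted base point $\widehat X+\Delta_N$ (your two brackets add up to exactly the paper's remainder $\int_0^1\{DP_{\St}(\widehat X+\Delta_N+t\Delta_T)-DP_{\St}(\widehat X)\}[\Delta_T]\,dt$), and finally a Sylvester-equation formula for $\mathcal L_{\widehat X}$ diagonalized in the eigenbasis of $H$. One slip to fix: the skew block satisfies $H\Omega'+\Omega'H=\bar X^\top Z-Z^\top\bar X=2\Omega$, not $\Omega H+H\Omega$ (which would force $\Omega'=\Omega$), and it is the correct equation that produces the multipliers $2/(h_i+h_j)\in[1/S,1/s]$ you then rely on.
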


The linear map $L_{\widehat X}$ is characterized by a Sylvester equation
\citep{Higham1986Polar,Gawlik2017PolarDerivative};
Appendix~\ref{app:polar} proves \eqref{eq:polar-remainder} and the
uniform operator bounds. Combining Theorem~\ref{thm:penalty} with the tangent--normal splitting gives
the stochastic disagreement recursion.

\begin{lemma}[Stochastic disagreement recursion]
\label{lem:disagreement}
There are \(C_1>0\) and a cap \(\bar\alpha>0\) such that,
whenever \(\Xb_k\) is in the radius-\(R\) tube and
\(\alpha_k\le\bar\alpha\),
\begin{align}
 \E[d(\Xb_{k+1})^2\mid\mathcal F_k]
 \le\frac{3+\rho^2}{4}d(\Xb_k)^2
 +C_1n\alpha_k^2\bigl(\xi^2+\tfrac{\sigma^2}{b}\bigr)
 +C_1n\alpha_k^4\bigl(\norm{\grad f(\bar X_k)}_F^2+\tfrac{\sigma^2}{nb}\bigr).
 \label{eq:disagreement}
\end{align}
Moreover, for either nonincreasing schedule stated below, the iterates stay
in the tube when \(d(\Xb_0)\le R/2\) and
\(\alpha_0\le(1-\rho)R/(2\sqrt nG)\).
\end{lemma}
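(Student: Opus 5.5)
The plan is to write $\Xb_{k+1}=F(\Xb_k)-\alpha_k\Sb_k$ and compare it with a feasible reference point. Since $\bar\Xb_{k+1}$ is the nearest point of $\Cman$ to $\Xb_{k+1}$, for any $\one\otimes Q\in\Cman$ we have $d(\Xb_{k+1})\le\norm{\Xb_{k+1}-\one\otimes Q}_F$. Take $Q=P_{\St}(\widehat X_k^F-\alpha_k\widehat S_k)$, where $\widehat X_k^F$ is the average of $F(\Xb_k)$ and $\widehat S_k=\frac1n\sum_i S_{i,k}$; this $Q$ is the projection of the new average. Set $\Delta=-\alpha_k\widehat S_k$ and base point $\widehat X_k^F$. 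Note that $P_{\St}(\widehat X_k^F)$ is the nearest point to $F(\Xb_k)$, which is within $\rho d(\Xb_k)$ of $\Cman$ by Theorem~\ref{thm:penalty}.

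Next, split the error as
\[
\Xb_{k+1}-\one\otimes Q=\bigl(F(\Xb_k)-\one\otimes P_{\St}(\widehat X_k^F)\bigr)-\alpha_k\bigl(\Sb_k-\one\otimes\mathcal L[P\widehat S_k]\bigr)-\one\otimes\mathcal R(\Delta),
\]
using Lemma~\ref{lem:polar-main}. The first term has norm at most $\rho d(\Xb_k)$. For the second, the consensus-direction component of $\Sb_k$ is mostly cancelled by $\mathcal L[P\widehat S_k]$, because $\mathcal L$ is near the identity on tangent directions when $\widehat X$ is near-orthonormal. The leftover is the deviation $\Sb_k-\one\otimes\widehat S_k$ plus an $O(d(\Xb_k))\norm{\widehat S_k}$ mismatch. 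By Assumption~\ref{ass:heterogeneity}, smoothness on the tube, and the variance bound, the deviation satisfies $\E\norm{\cdot}^2\lesssim n(\xi^2+\sigma^2/b)+L^2d(\Xb_k)^2$. The remainder $\mathcal R$ contributes $C_p\alpha_k^2\norm{\widehat S_k}^2\sqrt n$. Its square is $n\alpha_k^4\norm{\widehat S_k}^4$, and using $\norm{\widehat S_k}\le G$ together with $\E\norm{\widehat S_k}^2\lesssim\norm{\grad f(\bar X_k)}^2+\sigma^2/(nb)+O(d^2+\xi$-free smoothness terms$)$ gives the $\alpha_k^4$ term.

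To assemble the bound, expand the square and apply Young's inequality with weight $\epsilon$: $(a+b)^2\le(1+\epsilon)a^2+(1+1/\epsilon)b^2$. Choose $\epsilon$ so that $(1+\epsilon)\rho^2\le(1+\rho^2)/2$. Terms of the form $\alpha_k^2 L^2d^2$ and $\alpha_k d^2$ are then absorbed into the margin up to $(3+\rho^2)/4$ once $\alpha_k\le\bar\alpha$. The cross term involving the zero-mean noise vanishes in conditional expectation with respect to the deterministic $\mathcal F_k$-measurable first term. This is what allows the variance to appear as $\sigma^2/b$ per node rather than paying a Young's inequality penalty.

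For tube invariance, argue by induction using the almost-sure bound $\norm{\Sb_k}_F\le\sqrt nG$: $d(\Xb_{k+1})\le\rho d(\Xb_k)+\alpha_k\sqrt nG\le\rho R/2+(1-\rho)R/2\le R$. Iterating with nonincreasing $\alpha_k$ gives $d\le R/2+\alpha_0\sqrt nG/(1-\rho)\le R$. A slightly sharper bookkeeping, bounding $d$ by the fixed point $\alpha_0\sqrt nG/(1-\rho)\le R/2$ together with the initial $R/2$, keeps the iterates within $R$; the full-column-rank requirement for the polar projection holds in the tube. The main obstacle is the tangent cancellation step: showing that $\Sb_k-\one\otimes\mathcal L_{\widehat X}[P_{\bar X}\widehat S_k]$ has no $O(\norm{\widehat S_k})$ component beyond heterogeneity and noise. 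The approach is to use that $S_{i,k}$ is tangent at $X_{i,k}$, which is $O(d)$ from $\bar X_k$, and that $\mathcal L_{\widehat X}-I=O(\norm{\widehat X^\top\widehat X-I})=O(d)$. The mismatch is then $O(d\cdot G)$, which is absorbed as above.
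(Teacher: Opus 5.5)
Your proof is correct, but it takes a different route from the paper at exactly the step you flag as the main obstacle.

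\emph{Your route.} You compare $\Xb_{k+1}$ with its true nearest point $\one\otimes P_{\St}(\widehat X_k^F-\alpha_k\widehat S_k)$. You expand around the infeasible post-mixing average $\widehat X_k^F$, and you use only $d(F(\Xb_k))\le\rho d(\Xb_k)$.

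\emph{The paper's route.} It compares with the non-nearest feasible stack $\one\otimes P_{\St}(\bar X_k-\alpha_kP_{\bar X_k}\widehat S_k)$, anchored at the current projection $\bar X_k$. It uses the first (fixed-representative) inequality of Theorem~\ref{thm:penalty}, $\norm{F(\Xb_k)-\one\otimes\bar X_k}_F\le\rho d(\Xb_k)$. Because $\bar X_k$ is feasible, $\mathcal L_{\bar X_k}$ is exactly the identity on $T_{\bar X_k}\St$. The stochastic term therefore splits exactly and orthogonally as $(I-\Jb)\Sb_k+\one\otimes(I-P_{\bar X_k})\widehat S_k$. The only mismatch is the normal residual $\norm{(I-P_{\bar X_k})S_{i,k}}_F=O(G\norm{X_{i,k}-\bar X_k}_F)$, so no tangent-cancellation estimate is ever needed.

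\emph{What your route costs.} You get the exact distance and need only the weaker contraction, but you need two facts not contained in Lemma~\ref{lem:polar-main}. First, a perturbation bound $\norm{\mathcal L_{\widehat X_k^F}[Z]-Z}_F\lesssim\norm{\widehat X_k^F-P_{\St}(\widehat X_k^F)}_F\norm{Z}_F$; this does follow from the Sylvester characterization. Second, the normal-residual bound at $P_{\St}(\widehat X_k^F)$ rather than at $\bar X_k$. That requires $\sqrt n\,\norm{P_{\St}(\widehat X_k^F)-\bar X_k}_F=O(d(\Xb_k))$, via Lipschitz continuity of $P_{\St}$ and Lemma~\ref{lem:landing-average}.

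\emph{Smaller points.} For the stack-level mismatch to be $O(d(\Xb_k)G)$ as you state, use the per-node estimate $\norm{\widehat X_k^F-P_{\St}(\widehat X_k^F)}_F\le\rho d(\Xb_k)/\sqrt n$ (Jensen) rather than just $O(d)$. Otherwise you lose a factor $\sqrt n$ and need an $n$-dependent absorption cap, which is harmless here but unnecessary.

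The vanishing cross term is also unnecessary. The paper simply pays the Young factor $1+\eta^{-1}$ on the noise, which is a constant absorbed into $C_1$. In any case that cancellation covers only the noise entering linearly, not the noise entering through $\mathcal R$.

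Your tube-invariance induction is the paper's argument. In fact your hypothesis $d(\Xb_k)\le R/2$ is preserved exactly, slightly sharper than the paper's $(1+\rho)R/2$.
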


The \(\alpha_k^2\) term in \eqref{eq:disagreement} is the usual effect
of heterogeneity and independent local noise.  A gradient shared by all
nodes is instead a common tangent direction; its distance to \(\Cman\)
starts at order \(\alpha_k^2\), producing the \(\alpha_k^4\) term after
squaring.

Double stochasticity of the communication matrix gives the exact average update
\begin{equation}
 \widehat X_{k+1}=\widehat X_k+\widehat F_k-\alpha_k\widehat S_k,
 \qquad \widehat F_k=\frac1n\sum_i\phi(X_{i,k})-\widehat X_k.
 \label{eq:average-update}
\end{equation}
Conditional independence yields
\(\E[\norm{n^{-1}\sum_i(S_{i,k}-G_{i,k})}_F^2\mid\mathcal F_k]\le\sigma^2/(nb)\), while
smoothness, with a tube-uniform constant \(L_g\), gives
\begin{equation}
 \norm{\widehat G_k-\grad f(\bar X_k)}_F^2
 \le\frac{L_g^2}{n}d(\Xb_k)^2.
 \label{eq:gradient-average}
\end{equation}
At $\bar{X}_k$, the local gradients average to the global
gradient; hence the error in \eqref{eq:gradient-average} is controlled by
the disagreement between local iterates.

\begin{lemma}[Landing-average bounds]
\label{lem:landing-average}
There are tube-uniform constants \(L_a,C_a>0\) such that
\begin{equation}
 \norm{\widehat F_k}_F^2\le\frac{L_a^2}{n}d(\Xb_k)^2,
 \qquad
 \norm{P_{\bar X_k}\widehat F_k}_F^2
 \le\frac{C_a}{n^2}d(\Xb_k)^4.
 \label{eq:landing-average-bounds}
\end{equation}
\end{lemma}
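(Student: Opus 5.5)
We prove both bounds in Lemma~\ref{lem:landing-average} by expanding the landing residual \(\phi(X_i)-X_i=-\frac13 X_i(X_i^\top X_i-I_r)\) around the common feasible point \(\bar X:=\bar X_k\). Write \(E_i:=X_i-\bar X\), so that \(\sum_i\norm{E_i}_F^2=d(\Xb_k)^2\). Then
\[
X_i^\top X_i-I_r=2\sym(\bar X^\top E_i)+E_i^\top E_i,
\qquad
\phi(X_i)-X_i=-\tfrac13\bigl(\bar X+E_i\bigr)\bigl(2\sym(\bar X^\top E_i)+E_i^\top E_i\bigr).
\]
On the radius-\(R\) tube all \(\norm{E_i}_F\) are bounded by a tube-uniform constant. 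Hence \(\norm{\phi(X_i)-X_i}_F\le L_a'\norm{E_i}_F\) for a tube-uniform \(L_a'\). Averaging and using Cauchy--Schwarz gives
\[
\norm{\widehat F_k}_F^2\le\frac1n\sum_i\norm{\phi(X_i)-X_i}_F^2\cdot\!1\le\frac{L_a'^2}{n}\,d(\Xb_k)^2,
\]
since \(\norm{\frac1n\sum_i v_i}^2\le\frac1n\sum_i\norm{v_i}^2\) and \(\frac1n\sum_i\norm{E_i}_F^2=d^2/n\). This proves the first bound with \(L_a=L_a'\).

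For the second bound, first separate the part of \(\widehat F_k\) that is linear in \(E_i\):
\[
\widehat F_k=-\tfrac{2}{3}\bar X\,\sym\!\Bigl(\bar X^\top\tfrac1n\textstyle\sum_iE_i\Bigr)+\frac1n\sum_i Q_i,
\qquad \norm{Q_i}_F\le C\norm{E_i}_F^2 .
\]
Two facts finish the argument.

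\emph{(i) The linear term is normal at \(\bar X\).} It has the form \(\bar X\,\mathrm{Sym}\). Since \(P_{\bar X}(\bar X A)=\bar X A-\bar X\sym(A)=0\) for symmetric \(A\), the projection \(P_{\bar X}\) annihilates it.

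\emph{(ii) The quadratic terms are small.} Because \(P_{\bar X}\) is nonexpansive,
\[
\norm{P_{\bar X}\widehat F_k}_F\le\frac1n\sum_i\norm{Q_i}_F\le\frac{C}{n}\sum_i\norm{E_i}_F^2=\frac{C}{n}d(\Xb_k)^2 .
\]
Squaring gives \(\norm{P_{\bar X_k}\widehat F_k}_F^2\le\frac{C^2}{n^2}d(\Xb_k)^4\), which is the second bound with \(C_a=C^2\).

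The main point to check is that \(\bar X\) is the polar projection of the average, not an arbitrary feasible point. The argument above never uses \(\frac1n\sum_iE_i=0\) beyond linearity, so this costs nothing. It only matters that \(\bar X\in\St(p,r)\), which makes \(P_{\bar X}\) the tangent projection, and that \(\bar X\) is well defined on the tube by Lemma~\ref{lem:rsi}.

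The other point to verify is that every constant is tube-uniform, i.e., independent of \(n\). This holds because each bound is nodewise and depends only on \(\sup\norm{E_i}_F\le R\). Explicitly, \(C=\frac13(1+2R)\) suffices, since
\[
Q_i=-\tfrac13\bigl(\bar X E_i^\top E_i+2E_i\sym(\bar X^\top E_i)+E_iE_i^\top E_i\bigr).
\]
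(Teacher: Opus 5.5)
Your proposal is correct and is essentially the paper's argument. The first estimate is a nodewise bound \(\|\phi(X_i)-X_i\|_F\le L_a\|E_i\|_F\) plus Jensen, and the second uses the fact that \(P_{\bar X_k}\) annihilates \(\bar X_k\) times any symmetric matrix, so only a nodewise quadratic remainder survives; the paper splits \(X_i(X_i^\top X_i-I_r)=\bar X_k(X_i^\top X_i-I_r)+E_i(X_i^\top X_i-I_r)\), which also discards your term \(\bar X_kE_i^\top E_i\) and gives \(C=\frac13(2+R)\).

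The only slip is your explicit value \(C=\frac13(1+2R)\), which is too small: for \(p=r=1\) and \(E_i=\epsilon\bar X_k\) one gets \(\|Q_i\|_F=(1+\epsilon/3)\epsilon^2\). The triangle inequality on your formula for \(Q_i\) gives \(C=\frac13(3+R)\), and since the lemma only asserts that tube-uniform constants exist, this does not affect the proof.
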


The first bound controls the ambient Taylor remainder; the sharper second
bound states that landing perturbs the average only to second order in
tangent directions.  Appendix~\ref{app:convergence} records their direct
derivation from the explicit polynomial landing map.
Applying Lemma~\ref{lem:polar-main} to \eqref{eq:average-update} gives
the following descent estimate.

\begin{lemma}[One-step descent]
\label{lem:descent}
After possibly reducing \(\bar\alpha\), there are constants
\(C_2,C_3>0\) such that every iterate in the invariant tube satisfies
\begin{align}
 \E[f(\bar X_{k+1})-f(\bar X_k)]
 &\le-C_3\alpha_k\E\norm{\grad f(\bar X_k)}_F^2
 +\frac{C_2\alpha_k}{n}\E d(\Xb_k)^2
 +\frac{C_2\alpha_k^2\sigma^2}{nb} \notag\\
 &\quad+\frac{C_2}{n}\E d(\Xb_k)^4
 +\frac{C_2}{\sqrt n}
 \sqrt{\E d(\Xb_k)^4\,\E\norm{\grad f(\bar X_k)}_F^2}.
 \label{eq:descent}
\end{align}
\end{lemma}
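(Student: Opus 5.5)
We would prove Lemma~\ref{lem:descent} by applying a second-order Taylor bound for \(f\) along the polar projection of the average update. Write the average update \eqref{eq:average-update} as \(\widehat X_{k+1}=\widehat X_k+\Delta_k\) with \(\Delta_k:=\widehat F_k-\alpha_k\widehat S_k\). The tube is invariant by Lemma~\ref{lem:disagreement}, so the singular values of \(\widehat X_k\) lie in a fixed interval \([s,S]\). By the first bound of Lemma~\ref{lem:landing-average} and the almost-sure bound \(\|\widehat S_k\|_F\le G\), \(\Delta_k\) is uniformly small once \(\bar\alpha\) and \(R\) are small. Lemma~\ref{lem:polar-main} then gives
\[
\bar X_{k+1}-\bar X_k=\mathcal L_{\widehat X_k}[P_{\bar X_k}\Delta_k]+\mathcal R_k,\qquad \|\mathcal R_k\|_F\le C_p\|P_{\bar X_k}\Delta_k\|_F\|\Delta_k\|_F .
\]
Since \(f\) is \(C^2\) on the compact tube, \(f(\bar X_{k+1})-f(\bar X_k)\le\langle\nabla f(\bar X_k),\bar X_{k+1}-\bar X_k\rangle+\tfrac{L}{2}\|\bar X_{k+1}-\bar X_k\|_F^2\). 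Because the increment is tangent to first order, we may replace \(\nabla f\) by \(\grad f(\bar X_k)\) in the linear term, up to a term absorbed into the remainder.

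\textbf{Linear term.} Split \(P_{\bar X_k}\Delta_k=P_{\bar X_k}\widehat F_k-\alpha_kP_{\bar X_k}\widehat S_k\).
\begin{itemize}
\item \emph{Gradient part.} Take conditional expectation: \(\E\widehat S_k=\widehat G_k\). By \eqref{eq:gradient-average}, \(\widehat G_k=\grad f(\bar X_k)+E_k\) with \(\|E_k\|_F^2\le L_g^2d^2/n\). We also need \(P_{\bar X_k}\widehat G_k\) close to \(\widehat G_k\), which holds up to \(O(d/\sqrt n)\). This gives
\[
-\alpha_k\langle\grad f,\mathcal L[\grad f]\rangle+\alpha_k\|\grad f\|\,O(d/\sqrt n).
\]
\item \emph{Coercivity.} We need \(\langle \xi,\mathcal L_{\widehat X}\xi\rangle\ge c\|\xi\|^2\) on \(T_{\bar X}\St\). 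This follows from the Sylvester characterization: for \(\widehat X=\bar X H\) with \(H\) symmetric positive definite and close to \(I\) in the tube, \(\mathcal L\) is a perturbation of \(H^{-1}\)-type scaling, hence positive definite. This is the one structural fact not literally stated in Lemma~\ref{lem:polar-main}. We would derive it from the explicit derivative of the polar factor and shrink \(R\) if needed.
\item \emph{Young's inequality.} Applying Young to the cross term yields \(-C_3\alpha_k\|\grad f\|^2+C\alpha_kd^2/n\).
\item \emph{Landing part.} By the second bound of Lemma~\ref{lem:landing-average}, \(\|P_{\bar X_k}\widehat F_k\|\le\sqrt{C_a}\,d^2/n\). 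Its inner product with \(\grad f\) is therefore at most \(\|\grad f\|\sqrt{C_a}\,d^2/n\). After taking full expectation and applying Cauchy--Schwarz, this becomes the term \(\frac{C_2}{\sqrt n}\sqrt{\E d^4\,\E\|\grad f\|^2}\). The bound \(1/n\le1/\sqrt n\) is harmless.
\end{itemize}

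\textbf{Quadratic and remainder terms.} The increment satisfies \(\|\bar X_{k+1}-\bar X_k\|^2\lesssim\|P\Delta_k\|^2(1+\|\Delta_k\|^2)\). Moreover \(\|P\Delta_k\|^2\le2\|P\widehat F_k\|^2+2\alpha_k^2\|\widehat S_k\|^2\).
\begin{itemize}
\item The first piece gives \(d^4/n^2\le d^4/n\).
\item For the second, take conditional expectation and use independence across nodes: \(\E\|\widehat S_k\|^2\le 2\|\widehat G_k\|^2+\sigma^2/(nb)\). This yields \(\alpha_k^2\sigma^2/(nb)\), together with \(\alpha_k^2\|\grad f\|^2\) and \(\alpha_k^2 d^2/n\). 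The \(\alpha_k^2\|\grad f\|^2\) term is absorbed into \(-C_3\alpha_k\|\grad f\|^2\) after reducing \(\bar\alpha\). The \(\alpha_k^2d^2/n\) term is at most \(\alpha_kd^2/n\).
\item The remainder \(\mathcal R_k\) enters linearly through \(\langle\nabla f,\mathcal R_k\rangle\le G'C_p\|P\Delta_k\|\|\Delta_k\|\). This needs care: \(\|\Delta_k\|\le L_ad/\sqrt n+\alpha_kG\), and a crude bound would give non-summable \(d\cdot\alpha_k\) cross terms. We would bound
\[
\|P\Delta_k\|\|\Delta_k\|\le\bigl(\sqrt{C_a}\,d^2/n+\alpha_k\|\widehat S_k\|\bigr)\bigl(L_ad/\sqrt n+\alpha_k\|\widehat S_k\|\bigr)
\]
and expand. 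The term \(\alpha_k\|\widehat S_k\|\cdot d/\sqrt n\) is handled by Young as \(\epsilon\alpha_k\|\widehat S_k\|^2+\alpha_kd^2/(\epsilon n)\), with \(\E\|\widehat S_k\|^2\) controlled as above. Terms with \(d^3\) are bounded using \(d\le R\). The term \(\alpha_k^2\|\widehat S_k\|^2\) is treated as before.
\end{itemize}

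\textbf{Main obstacle.} We expect the hardest step to be this remainder and the cross term between landing and normal components. The tangent-only second-order estimate of Lemma~\ref{lem:landing-average} must be used so that no first-order \(d\) term survives without a factor \(\alpha_k\) or \(d\). Coercivity of \(\mathcal L\) must also be verified uniformly. Once both are in place, taking total expectation gives \eqref{eq:descent}.
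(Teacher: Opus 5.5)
Your skeleton is the paper's: the polar expansion of \(\Delta_k=\widehat F_k-\alpha_k\widehat S_k\), coercivity of \(\mathcal L_{\widehat X_k}\), the second-order tangent landing bound, and the chord term. However, the step you flag as the main obstacle does not close as proposed. You bound the remainder with a uniform constant, \(\ip{\nabla f(\bar X_k)}{\mathcal R_k}\le G'C_p\norm{P_{\bar X_k}\Delta_k}_F\norm{\Delta_k}_F\), and two of the resulting pieces are not of the form allowed in \plaineqref{eq:descent}.

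The landing--landing piece \(G'C_p\sqrt{C_a}L_a\,d(\Xb_k)^3/n^{3/2}\) carries no factor \(\alpha_k\). Using \(d\le R\) only turns it into a bare \(O(d(\Xb_k)^2)\) term. With \(\sigma=0\) and \(\grad f(\bar X_k)=0\), the right-hand side reduces to \(C_2(\alpha_kd^2+d^4)/n\), which cannot dominate \(d^3\) when \(\alpha_k\ll d\ll 1\).

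In the landing--gradient piece \(G'C_pL_a\alpha_k\norm{\widehat S_k}_F\,d(\Xb_k)/\sqrt n\), your Young split leaves \(\epsilon\alpha_k\sigma^2/(nb)\). That is one power of \(\alpha_k\) short of the claimed \(\alpha_k^2\sigma^2/(nb)\). No choice of \(\epsilon\) fixes this: the noise part \(\alpha_k\sigma d/(n\sqrt b)\) exceeds the geometric mean \(\alpha_k^{3/2}\sigma d/(n\sqrt b)\) of the two admissible terms \(\alpha_kd^2/n\) and \(\alpha_k^2\sigma^2/(nb)\) by a factor \(\alpha_k^{-1/2}\). After summing and dividing by \(\sum_k\alpha_k\), such a term would leave a non-vanishing \(\sigma^2/(nb)\) floor in Theorem~\ref{thm:master}.

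The missing idea is to pair \(\mathcal R_k\) with \(\grad f(\bar X_k)\), not with a bound on \(\nabla f\), so that every remainder piece can be traded against the descent term. You already have the tool. For \(Q,Q'\in\St\), the normal part of \(Q'-Q\) is \(-\tfrac12Q(Q'-Q)^\top(Q'-Q)\). So \(\nabla f\) may be replaced by \(\grad f(\bar X_k)\) in the whole linear term, including the \(\mathcal R_k\) part, at the cost of a chord term; this is \plaineqref{eq:chord-smoothness-app}.

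Cauchy--Schwarz in expectation then gives \(\E\ip{\grad f(\bar X_k)}{\mathcal R_k}\le C_p\sqrt{\E\norm{\grad f(\bar X_k)}_F^2}\,\sqrt{\E[\norm{P_{\bar X_k}\Delta_k}_F^2\norm{\Delta_k}_F^2]}\). Expand the product into four pieces. Use \(\norm{\widehat S_k}_F\le G\) pathwise wherever \(\widehat S_k\) multiplies a landing factor, \(\norm{P_{\bar X_k}\widehat S_k}_F\le G\) in the last piece, and \(d\le R\) only inside \(\norm{\widehat F_k}_F^2\le L_a^2R^2/n\). The pieces then behave as follows.
\begin{itemize}
\item The two pieces containing \(P_{\bar X_k}\widehat F_k\) become the mixed term \(\frac{C}{\sqrt n}\sqrt{\E d(\Xb_k)^4\,\E\norm{\grad f(\bar X_k)}_F^2}\).
\item The landing--gradient piece \(C\alpha_k\sqrt{\E\norm{\grad f(\bar X_k)}_F^2\,\E d(\Xb_k)^2/n}\) splits by Young into \(\epsilon\alpha_k\E\norm{\grad f(\bar X_k)}_F^2+C_\epsilon\alpha_k\E d(\Xb_k)^2/n\).
\item The stochastic piece \(C\alpha_k^2\sqrt{\sigma^2/(nb)}\,\sqrt{\E\norm{\grad f(\bar X_k)}_F^2}\) splits into \(\epsilon\alpha_k\E\norm{\grad f(\bar X_k)}_F^2+C_\epsilon\alpha_k^3\sigma^2/(nb)\).
\end{itemize}
The \(\epsilon\)-terms are absorbed by the coercive descent term. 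The leftover \(\alpha_k^2\E\norm{\grad f(\bar X_k)}_F^2\) is absorbed by reducing \(\bar\alpha\), exactly as you already do for the chord term. This trade against \(-C_3\alpha_k\E\norm{\grad f(\bar X_k)}_F^2\) is precisely what the constant \(G'\) throws away.
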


The factor \(\alpha_k\) before \(\E d(\Xb_k)^2\) comes from the tangent
projection in \eqref{eq:polar-expansion}. The fourth-order and
mixed terms are reduced in Appendix~\ref{app:accumulation} using tube
invariance and \eqref{eq:disagreement}.

\subsection{Error accumulation and rates}

Summing the disagreement recursion controls its contribution to descent.
The higher-order terms can then be absorbed under a common stepsize cap;
Appendix~\ref{app:accumulation} gives the accumulation argument. Define
\(\mathcal E_0=f(\bar X_0)-\inf_{X\in\St}f(X)
+d(\Xb_0)^2+d(\Xb_0)/\sqrt n\).

\begin{theorem}[Unified stochastic bound]
\label{thm:master}
Under Assumptions~\ref{ass:regularity}--\ref{ass:network}, suppose
the deterministic initialization satisfies \(d(\Xb_0)\le R/2\), and let
\(0<\alpha_k\le\bar\alpha\) be either constant or
\(\alpha_0/\sqrt{k+1}\).  Constants \(C_0,C_1,C_2\), independent of
\(K,b,\sigma,\xi\), satisfy
\begingroup\small
\begin{equation}
 \frac{\sum_{k<K}\alpha_k\E\norm{\grad f(\bar X_k)}_F^2}
      {\sum_{k<K}\alpha_k}
 \le \frac{C_0\mathcal E_0}{\sum_{k<K}\alpha_k}
 +\frac{C_1\sigma^2\sum_{k<K}\alpha_k^2}
 {nb\sum_{k<K}\alpha_k}
 +\frac{C_2n(\xi^2+\sigma^2/b)\sum_{k<K}\alpha_k^3}
 {\sum_{k<K}\alpha_k}.
 \label{eq:master}
\end{equation}
\endgroup
\end{theorem}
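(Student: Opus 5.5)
We plan to sum the one-step descent inequality of Lemma~\ref{lem:descent} over \(k<K\), and to control the disagreement terms by unrolling the recursion of Lemma~\ref{lem:disagreement}. Since \(d(\Xb_0)\le R/2\) and \(\alpha_k\le\bar\alpha\) (with \(\bar\alpha\) reduced so that \(\alpha_0\le(1-\rho)R/(2\sqrt nG)\)), tube invariance from Lemma~\ref{lem:disagreement} applies at every iterate. All the lemmas therefore hold along the whole trajectory, and \(f(\bar X_K)\ge\inf_{\St}f\) makes the telescoped left side at least \(-(f(\bar X_0)-\inf f)\).

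\textbf{Step 1 (second moments of disagreement).} Write \(D_k=\E d(\Xb_k)^2\), \(g_k=\E\norm{\grad f(\bar X_k)}_F^2\), and \(q=(3+\rho^2)/4<1\). Taking total expectation in \eqref{eq:disagreement} gives
\[
 D_{k+1}\le qD_k+C_1n\alpha_k^2\bigl(\xi^2+\tfrac{\sigma^2}{b}\bigr)+C_1n\alpha_k^4\bigl(g_k+\tfrac{\sigma^2}{nb}\bigr).
\]
We unroll this as \(D_k\le q^kD_0+\sum_{j<k}q^{k-1-j}(\cdots)_j\), multiply by \(\alpha_k\), and sum over \(k\). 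Both schedules are nonincreasing and satisfy \(\alpha_j\le c\,\alpha_{k}\) for \(j\le k\) within a geometric window; for \(\alpha_0/\sqrt{k+1}\) the ratio \(\alpha_j/\alpha_k\) grows only polynomially while \(q^{k-j}\) decays geometrically. This yields
\[
 \sum_k\alpha_kD_k\lesssim \frac{\alpha_0D_0}{1-q}+\frac{n(\xi^2+\sigma^2/b)}{1-q}\sum_k\alpha_k^3+\frac{n}{1-q}\sum_k\alpha_k^5\bigl(g_k+\tfrac{\sigma^2}{nb}\bigr).
\]
After division by \(n\) as in \eqref{eq:descent}, the \(\alpha^3\) term is exactly the last term of \eqref{eq:master}. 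The term \(\alpha_k^5g_k\) is absorbed into \(C_3\alpha_kg_k\) once \(\bar\alpha^4\) is small enough. The term \(\alpha_k^5\sigma^2/(nb)\) is dominated by \(\alpha_k^2\sigma^2/(nb)\).

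\textbf{Step 2 (fourth moments and the mixed term).} This is the main obstacle, since \eqref{eq:descent} contains \(\E d^4\) without a factor \(\alpha_k\). Our first attempt uses tube invariance to write \(d(\Xb_k)^4\le R^2d(\Xb_k)^2\), but that loses the \(\alpha_k\) factor. Instead we derive a fourth-moment recursion: square the pathwise bound \(d(\Xb_{k+1})\le\rho\,d(\Xb_k)+\alpha_k\norm{\Sb_k-\text{common part}}_F+O(\alpha_k^2)\), which follows from Theorem~\ref{thm:penalty} and the Lipschitz property of the distance. Using the almost-sure bound \(G\), this gives
\[
 \E d_{k+1}^4\le q'\,\E d_k^4+C n^2\alpha_k^4\bigl(\xi^2+\tfrac{\sigma^2}{b}\bigr)G^2+\dots
\]
The quantity \(\sum_k\E d_k^4\) is then bounded by \(d(\Xb_0)^4\) plus \(n^2\sum_k\alpha_k^4(\cdot)\le n^2\bar\alpha\sum_k\alpha_k^3(\cdot)\). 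After the \(1/n\) prefactor this is again of the \(\alpha^3\) form, up to constants depending on \(n\) through tube-uniform bounds. We will allow \(C_2\) to absorb these, noting that it is independent of \(K,b,\sigma,\xi\). For the mixed term, Young's inequality gives
\[
 \frac{C_2}{\sqrt n}\sqrt{\E d^4\,g_k}\le \frac{C_3}{2}\alpha_kg_k+\frac{C_2^2}{2C_3n\alpha_k}\E d_k^4 .
\]
This requires \(\sum_k\E d_k^4/\alpha_k\) to be of order \(n^2\sum_k\alpha_k^3(\cdot)\). That holds because the fourth-moment recursion driven by \(\alpha_k^4\) gives \(\E d_k^4\lesssim n^2\alpha_k^4\) plus the initial transient \(q'^kd_0^4\). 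The transient, divided by \(\alpha_k\), sums to \(O(d_0^4/\alpha_K)\), which is the delicate part. We expect to handle it through an \(\mathcal E_0\)-type term \(d(\Xb_0)^2+d(\Xb_0)/\sqrt n\), using the square-root form directly for the early iterates: \(\sqrt{\E d^4}\le\rho^{k}d_0^2+\dots\), summed geometrically with \(g_k\le\) a tube bound. This explains the appearance of \(d(\Xb_0)/\sqrt n\) in \(\mathcal E_0\).

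\textbf{Step 3 (assemble).} We add the summed descent, absorb all \(g_k\)-terms into \(\tfrac{C_3}{4}\sum_k\alpha_kg_k\), and move them to the left. The remaining terms are: \(f(\bar X_0)-\inf f\) together with the initial-disagreement contributions, which we bound by \(C_0\mathcal E_0\); \(\sum_k\alpha_k^2\sigma^2/(nb)\); and \(n(\xi^2+\sigma^2/b)\sum_k\alpha_k^3\). Dividing by \(\tfrac{C_3}{4}\sum_k\alpha_k\) yields \eqref{eq:master}.
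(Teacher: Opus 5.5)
Your proposal is correct. Step 1 matches the paper in substance. The paper runs $v_k=\sqrt{\alpha_kD_k}$ through a stable scalar energy lemma; your unroll-and-swap argument needs only $\alpha_k\le\alpha_j$ for $k>j$. The polynomial ratio bound you invoke there is not needed until your later sum $\sum_kT_k/\alpha_k$.

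Step 2, however, takes a genuinely different and heavier route. The paper never forms a fourth-moment recursion. It unrolls the crude pathwise bound $d(\Xb_{k+1})\le\rho\, d(\Xb_k)+\alpha_k\sqrt nG$ to $d(\Xb_k)\le\rho^kd(\Xb_0)+\sqrt nG\alpha_k\sum_{j\ge0}\rho^j\sqrt{j+2}$. It then writes $d^4=d^2\cdot d^2$ and bounds one factor by this pathwise estimate, using $d\le R$ on the transient. This gives $\E d(\Xb_k)^4\le 2R^2\rho^{2k}d(\Xb_0)^2+O(nG^2\alpha_k^2)\,D_k$.

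That is exactly your ``first attempt'' with $R^2$ replaced by a sharper pathwise factor. It restores the missing $\alpha_k$, and the factor $n$ cancels the $1/n$ and $1/\sqrt n$ prefactors. The mixed term's transient produces the $d(\Xb_0)/\sqrt n$ in $\mathcal E_0$. Young then turns the remainder into $\varepsilon\alpha_kg_k+O(\varepsilon^{-1})\alpha_kD_k$, which is fed back into Step 1. Because $\xi^2+\sigma^2/b$ enters only through $D_k$, no common-part subtraction is needed.

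Your route needs considerably more:
\begin{itemize}
\item the refined pathwise inequality with the common tangent part removed, which comes from the polar comparison point of Lemma~\ref{lem:tangent-perturb}, not from Theorem~\ref{thm:penalty} plus Lipschitzness of $d$ alone (that only gives $\alpha_k\norm{\Sb_k}_F$);
\item the reduction $\E Z^4\le\sup Z^2\,\E Z^2$ to retain the factor $\xi^2+\sigma^2/b$;
\item a fourth-power Young margin;
\item the ratio-weighted summation of $T_k/\alpha_k$;
\item the split $\sqrt{\E d(\Xb_k)^4}\le q'^{k/2}d(\Xb_0)^2+\sqrt{T_k}$ before applying Young.
\end{itemize}
You rightly flag that split as necessary. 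The unsplit transient is actually of order $d(\Xb_0)^4/\alpha_0$ rather than $d(\Xb_0)^4/\alpha_K$, since $q'^k$ beats $\sqrt{k+1}$. It is inadmissible either way, because $C_0$ cannot depend on the stepsize.

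Two terms in your scheme force an $n$-dependent cap, just as the last entry of the paper's $\bar\alpha$ does. One is the forcing term $n^2\alpha_k^8\norm{\widehat S_k}_F^2$ of your recursion. The other is the $n\alpha_k^5g_k$ term of Step 1 re-entering through your recursion's $D_k$-forcing.

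Your route yields a sharper intermediate estimate: $\E d(\Xb_k)^4$ of order $n^2\alpha_k^4(\xi^2+\sigma^2/b)$, plus transient and feedback terms. It ends at the same bound, while the paper's argument is much shorter.

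One small correction: in both routes the factor $n$ in the last term of \eqref{eq:master} comes from the higher-order terms. Step 1 alone, after division by $n$, contributes only $(\xi^2+\sigma^2/b)\sum_k\alpha_k^3$, not ``exactly'' the last term.
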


The terms in \eqref{eq:master} represent optimization progress,
averaged sampling noise, and accumulated network disagreement, respectively.

\begin{corollary}[Square-root diminishing stepsize]
\label{cor:diminishing}
For \(\alpha_k=\alpha_0/\sqrt{k+1}\),
\begin{equation}
 \min_{k<K}\E\norm{\grad f(\bar X_k)}_F^2
 \le \frac{C_0\mathcal E_0}{\alpha_0\sqrt K}
 +\frac{C_1\alpha_0\sigma^2(1+\log K)}{nb\sqrt K}
 +\frac{C_2n\alpha_0^2(\xi^2+\sigma^2/b)}{\sqrt K}.
 \label{eq:diminishing}
\end{equation}
\end{corollary}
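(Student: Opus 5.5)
The plan is to specialize Theorem~\ref{thm:master} to the schedule \(\alpha_k=\alpha_0/\sqrt{k+1}\). Since \(\alpha_0\le\bar\alpha\) and the schedule is nonincreasing, every \(\alpha_k\le\bar\alpha\). Together with \(d(\Xb_0)\le R/2\), this puts us in the setting of the theorem, so \eqref{eq:master} holds. The left-hand side of \eqref{eq:master} is a weighted average of \(\E\norm{\grad f(\bar X_k)}_F^2\) with nonnegative weights \(\alpha_k/\sum_{j<K}\alpha_j\). It therefore dominates \(\min_{k<K}\E\norm{\grad f(\bar X_k)}_F^2\), and what remains is to estimate the three step-size sums by elementary integral comparisons.

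For the denominator, I use \(\sum_{k<K}(k+1)^{-1/2}\ge\int_1^{K+1}t^{-1/2}\,dt=2(\sqrt{K+1}-1)\). A cleaner bound comes from monotonicity: each of the \(K\) terms is at least \((K)^{-1/2}\) (using \(k+1\le K\)), so \(\sum_{k<K}\alpha_k\ge\alpha_0\sqrt K\). For the numerators I use
\begin{equation*}
\sum_{k<K}\frac{1}{k+1}\le1+\log K,\qquad
\sum_{k<K}\frac{1}{(k+1)^{3/2}}\le\zeta(3/2)<3 .
\end{equation*}
Hence \(\sum_{k<K}\alpha_k^2\le\alpha_0^2(1+\log K)\) and \(\sum_{k<K}\alpha_k^3\le3\alpha_0^3\).

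Substituting these into \eqref{eq:master} gives the three terms directly. The first becomes \(C_0\mathcal E_0/(\alpha_0\sqrt K)\). The second becomes \(C_1\sigma^2\alpha_0^2(1+\log K)/(nb\,\alpha_0\sqrt K)\), which simplifies to \(C_1\alpha_0\sigma^2(1+\log K)/(nb\sqrt K)\). The third becomes \(3C_2n(\xi^2+\sigma^2/b)\alpha_0^3/(\alpha_0\sqrt K)\), and the factor \(3\) is absorbed by renaming \(C_2\). This yields \eqref{eq:diminishing}.

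There is no substantive obstacle, since everything is inherited from Theorem~\ref{thm:master}. The one point to check is that its constants are independent of the schedule, so that absorbing numerical factors into \(C_2\) is legitimate.
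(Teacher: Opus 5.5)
Your proposal is correct and follows the paper's own argument. The paper also specializes Theorem~\ref{thm:master}, uses $\sum_{k<K}\alpha_k\ge\alpha_0\sqrt K$, $\sum_{k<K}\alpha_k^2\le\alpha_0^2(1+\log K)$, and $\sum_{k<K}\alpha_k^3\le 3\alpha_0^3$, bounds the minimum by the weighted average, and implicitly absorbs the factor $3$ into $C_2$ as you do.
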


For a horizon-dependent constant step, fix a problem-scale constant \(c>0\),
independent of \(K,n,b,\sigma,\xi\), and set
\begin{equation}
 \alpha_K=\min\!\left\{\bar\alpha,
 \frac{c}{1+\sigma\sqrt{K/(nb)}+(n\xi^2K)^{1/3}}\right\}.
 \label{eq:horizon-step}
\end{equation}

\begin{corollary}[Linear speedup]
\label{cor:speedup}
Use \(\alpha_k\equiv\alpha_K\) from \eqref{eq:horizon-step} for \(K\)
iterations.  For a fixed network, or a uniformly gapped network family with
uniformly bounded problem and oracle constants, every \(\sigma,\xi\ge0\),
including \(\sigma=\xi=0\), satisfies
\begin{equation}
 \frac1K\sum_{k<K}\E\norm{\grad f(\bar X_k)}_F^2
 =O\!\left(\frac{\sigma}{\sqrt{nbK}}
 +\frac{(n\xi^2)^{1/3}}{K^{2/3}}
 +\frac{n^2}{K}\right).
 \label{eq:speed-rate}
\end{equation}
\end{corollary}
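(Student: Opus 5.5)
Plug the constant stepsize \(\alpha_k\equiv\alpha_K\) into the unified bound of Theorem~\ref{thm:master}. With a constant step the weighted average on the left becomes the plain average \(\frac1K\sum_{k<K}\E\norm{\grad f(\bar X_k)}_F^2\), and the right-hand side reduces to
\[
 \frac{C_0\mathcal E_0}{K\alpha_K}+\frac{C_1\sigma^2\alpha_K}{nb}+C_2n\Bigl(\xi^2+\frac{\sigma^2}{b}\Bigr)\alpha_K^2 .
\]
For a fixed network, or a uniformly gapped family with uniformly bounded problem and oracle constants, the quantities \(C_0,C_1,C_2,\bar\alpha,R\) are uniform, since they depend on \(W\) only through \(\rho>\rho_\star\). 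The initialization condition \(d(\Xb_0)\le R/2\) keeps \(\mathcal E_0=O(1)\). Taking \(c\) small enough that \(\alpha_K\le\bar\alpha\) and \(\alpha_K\le(1-\rho)R/(2\sqrt nG)\) gives tube invariance via Lemma~\ref{lem:disagreement}. This second cap is where an \(n\)-dependence enters: it forces \(\alpha_K\lesssim 1/\sqrt n\), and it is the source of the \(n^2/K\)-type transient. The rest is bookkeeping of the three terms above.

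\textbf{First term.} Since \(1/\alpha_K=\max\{1/\bar\alpha,\,(1+\sigma\sqrt{K/(nb)}+(n\xi^2K)^{1/3})/c\}\), we get
\[
 \frac{1}{K\alpha_K}\lesssim\frac1K+\frac{\sigma}{\sqrt{nbK}}+\frac{(n\xi^2)^{1/3}}{K^{2/3}},
\]
where the \(1/K\) part also absorbs the cap \(\alpha_K\lesssim1/\sqrt n\) (this gives \(\sqrt n/K\le n^2/K\)).

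\textbf{Second term.} Use \(\alpha_K\le c/(\sigma\sqrt{K/(nb)})\) when \(\sigma>0\). Then \(\sigma^2\alpha_K/(nb)\le c\,\sigma/\sqrt{nbK}\). If \(\sigma=0\) the term vanishes.

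\textbf{Third term.} Split it into the \(\xi^2\) and \(\sigma^2/b\) pieces. For heterogeneity, \(\alpha_K\le c(n\xi^2K)^{-1/3}\) gives
\[
 n\xi^2\alpha_K^2\le c^2(n\xi^2)^{1/3}K^{-2/3}.
\]
The variance piece \(n\sigma^2\alpha_K^2/b\) is the main obstacle, because the extra factor \(n\) fights linear speedup. I would bound \(\alpha_K^2\) by the product of \(\alpha_K\le c\sqrt{nb}/(\sigma\sqrt K)\) and \(\alpha_K\le c/(1+\sigma\sqrt{K/(nb)})\), which gives
\[
 \frac{n\sigma^2\alpha_K^2}{b}\le \frac{n\sigma^2}{b}\cdot\frac{c^2 nb}{\sigma^2K}=\frac{c^2n^2}{K}.
\]
This explains the appearance of the \(n^2/K\) transient in \eqref{eq:speed-rate}.

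If instead this crude bound gives the wrong power of \(n\), I would use a case split. When \(\sigma\sqrt{K/(nb)}\ge1\), apply \(\alpha_K\le c\sqrt{nb}/(\sigma\sqrt K)\) twice, which yields exactly \(c^2n^2/K\). Otherwise \(\sigma^2\le nb/K\), so \(n\sigma^2\alpha_K^2/b\le n^2\bar\alpha^2/K\).

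Collecting the three estimates gives \eqref{eq:speed-rate} uniformly over \(\sigma,\xi\ge0\). The degenerate case \(\sigma=\xi=0\) reduces to \(\alpha_K=\min\{\bar\alpha,c\}\) and an \(O(1/K)\) rate. Beyond the variance term, the remaining care point is checking that all hidden constants, including the tube-invariance step cap, are uniform over the network family.
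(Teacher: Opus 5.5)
Your proposal is correct and follows the paper's proof essentially step for step: specialize \eqref{eq:master} to the constant step (the paper's \eqref{eq:constant-step}), then bound \(1/(\alpha_K K)\), \(\alpha_K\sigma^2/(nb)\), \(n\alpha_K^2\xi^2\), and \(n\alpha_K^2\sigma^2/b\) separately, the last giving \(c^2n^2/K\) via \(z^2/(1+z)^2\le 1\) with \(z=\sigma\sqrt{K/(nb)}\), so your fallback case split is unnecessary. One small correction is that \(\bar\alpha\) is not uniform in \(n\), since it already contains the tube cap \((1-\rho)R/(2\sqrt n G)\) and an \(n^{-1/4}\) feedback-absorption cap; hence nothing should be imposed on the \(n\)-independent constant \(c\), because the minimum in \eqref{eq:horizon-step} enforces these caps automatically, and the only fact you need is \(\bar\alpha^{-1}=O(\sqrt n)\), which gives the \(\sqrt n/K\le n^2/K\) contribution you already account for.
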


The stochastic term \(\sigma/\sqrt{nbK}\) gives linear speedup with
respect to the number of nodes when it dominates the other two terms.
The term \(n^2/K\) is a network-dependent transient.
If \(\sigma>0\), it is dominated by the variance term once
\(K\gtrsim n^5b/\sigma^2\); if \(\xi>0\), it is dominated by the
heterogeneity term once \(K\gtrsim n^5/\xi^2\).
After either applicable threshold, the remaining two terms recover the
classical \(K^{-1/2}\) and \(K^{-2/3}\) iteration exponents of tracker-free
decentralized SGD \citep{Tang2018D2,Lian2017DPSGD}.

\begingroup
\setlength{\textfloatsep}{12pt plus 2pt minus 2pt}
\setlength{\floatsep}{8pt plus 2pt minus 2pt}
\setlength{\abovecaptionskip}{6pt}
\section{Experiments}
\label{sec:experiments}

We evaluate \plados on principal component analysis (PCA) and low-rank matrix completion, examining
stepsize and network sensitivity, scalability, data heterogeneity, and
computational efficiency against DRSGD \citep{Chen2021DecentralizedRG}.
All runs use \(\gamma=1/3\) and 1 communication round with Metropolis
weights \citep{Xiao2006Metropolis}. Curves are medians over 5 seeds for
PCA robustness and scalability, and 3 seeds for matrix completion and
heterogeneity. Appendix~\ref{app:experiments} provides implementation details.

\subsection{Decentralized PCA}

Node \(i\) stores \(A_i\in\R^{m_i\times p}\) and minimizes
\(f_i(X)=-(2m_i)^{-1}\|A_iX\|_F^2\) over \(X\in\St(p,r)\), with \(r=5\).
We evaluate the projected average
\(\bar X_k=P_{\St}(n^{-1}\sum_iX_{i,k})\) using
\(\|\grad f(\bar X_k)\|_F\) and the subspace distance
\begin{equation}
 d_s(\bar X_k,X^\star)=
 \min_{QQ^\top = Q^\top Q=I_r}\|\bar X_k-X^\star Q\|_F.
 \label{eq:experimental-distance}
\end{equation}
Here \(X^\star\) spans the leading \(r\)-dimensional principal subspace.
Figure~\ref{fig:robustness} uses the population covariance as the reference;
the remaining PCA experiments use the global empirical covariance.

\paragraph{Synthetic PCA.}
For stepsize and network sensitivity, we set \(n=8\), \(m_i=1000\) for all
\(i\), \(p=10\), and \(r=5\). The rows of each \(A_i\) are independently
sampled from a zero-mean Gaussian distribution whose covariance has
eigenvalues \(1,0.8,\ldots,0.8^{p-1}\). With batch size 100, each run performs
60000 updates, or 6000 local epochs.
Figure~\ref{fig:robustness}(a,b) compares \plados and DRSGD using
\(\alpha_k=\alpha_0/\sqrt{k+1}\), with
\(\alpha_0\in\{0.01,0.03,0.05,0.08\}\), on an Erd\H{o}s--R\'enyi
graph \citep{Erdos1959Random,Gilbert1959Random} with independent edge
probability \(0.6\), denoted ER(0.6).
Panels (c,d) fix \(\alpha_0=0.03\) and compare complete, star, ER(0.4),
ER(0.6), and ER(0.8) graphs. In panels (a,b), larger values of \(\alpha_0\)
give faster initial progress, and \plados reaches comparable errors in
less wall-clock time than DRSGD. In panels (c,d), the trajectories of both metrics nearly overlap
across the 5 graphs at \(\alpha_0=0.03\).

For scalability, we set \(p=100\) and \(m_i=60000/n\), and compare
\(n=8,16,32\) nodes under a common epoch budget
\citep{Chen2021DecentralizedRG}. With a ring graph and batch size \(b=1\),
each run completes 300 epochs, with the same total sample budget across
node counts. We use the constant stepsize
\begin{equation}
 \alpha_{\rm syn}(n)=\widehat\alpha_n\sqrt{n/300},
 \label{eq:synthetic-scalability-step}
\end{equation}
with \((\widehat\alpha_8,\widehat\alpha_{16},\widehat\alpha_{32})
=(0.003,0.004,0.006)\).
Figure~\ref{fig:scalability}(a,b) shows nearly linear speedup in wall-clock
time over the tested range of node counts.

\begin{figure}[t]
 \centering
 \includegraphics[width=\linewidth]{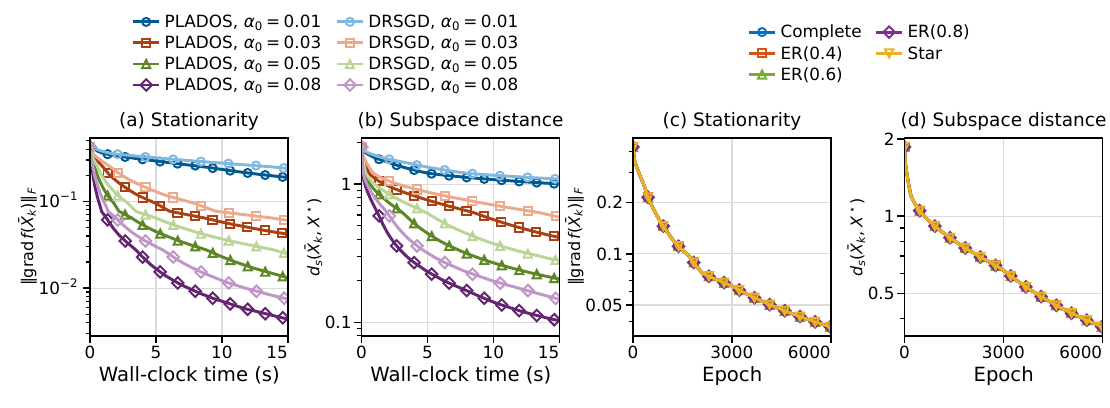}
 \caption{Synthetic PCA: \plados and DRSGD at matched diminishing stepsizes
 (a,b), and \plados on different networks (c,d).}
 \label{fig:robustness}
\end{figure}

\paragraph{MNIST PCA.}
We use the MNIST handwritten-digit dataset \citep{LeCun1998Gradient}.
The scalability experiment uses all 60000 training images. Each image is
vectorized in \(\R^p\) with \(p=784\), and pixel values are divided by 255.
The node counts, local sample sizes, ring graph, batch size, and epoch budget
match the synthetic scalability experiment. We use the constant stepsize
\begin{equation}
 \alpha_{\rm MNIST}(n)=\frac{\widehat\alpha_n\sqrt n}{10000\sqrt{300}},
 \label{eq:mnist-scalability-step}
\end{equation}
with \((\widehat\alpha_8,\widehat\alpha_{16},\widehat\alpha_{32})
=(1.0,1.4,2.0)\).
Figure~\ref{fig:scalability}(c,d) shows aligned epoch curves and nearly linear
wall-clock scaling over the tested range.
Appendix~\ref{app:scalability} details the speedup analysis.

\begin{figure}[t]
 \centering
 \includegraphics[width=\linewidth]{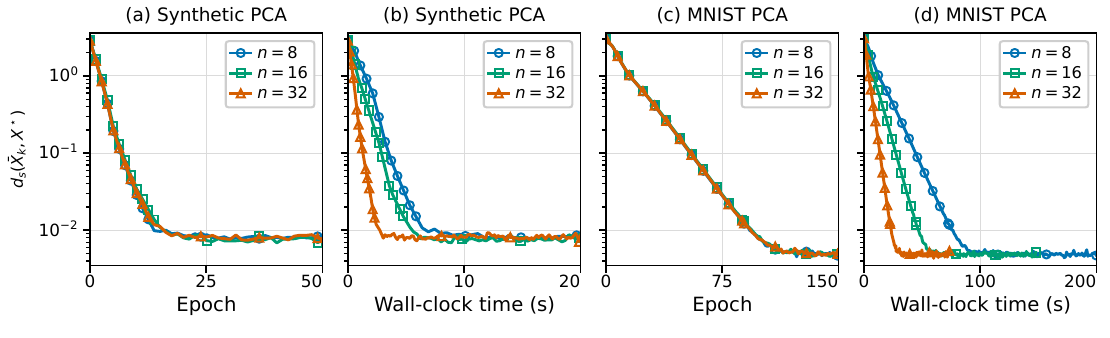}
 \caption{PCA scalability with 8, 16, and 32 MPI nodes.
 All runs complete 300 epochs.}
 \label{fig:scalability}
\end{figure}

To study data heterogeneity, we adapt the label-based partitioning of
\citet{Hsieh2020NonIID}. Images are sorted by label into 8 equal blocks.
Each node retains a randomly selected fraction
\(\rho\in\{0,0.2,0.4,0.6,0.8,1\}\) of its block; the rest is pooled and
randomly redistributed in equal shares. A larger \(\rho\) induces greater
data heterogeneity across nodes through increased label skew. The endpoints
\(\rho=0\) and \(\rho=1\) correspond to random assignment and label-based
blocks, respectively, while the global objective remains unchanged.

The 8 ring nodes use exact local gradients, \(\sigma=0\), and a shared
constant stepsize \(\alpha=7\times10^{-4}\).
Figure~\ref{fig:heterogeneity} shows the Riemannian gradient norm and
subspace distance over epochs 4000--10000. Larger data heterogeneity
generally yields larger Riemannian gradient norms and subspace distances.
Over the full budget of 15000 updates, runs with larger data heterogeneity exhibit nonzero
residual plateaus, illustrating convergence
to a neighborhood at the fixed stepsize, as discussed in
Appendix~\ref{app:heterogeneity}.
This agrees with the residual term in appendix
\eqref{eq:constant-step}.

\begin{figure}[t]
 \centering
 \includegraphics[width=0.780\linewidth]{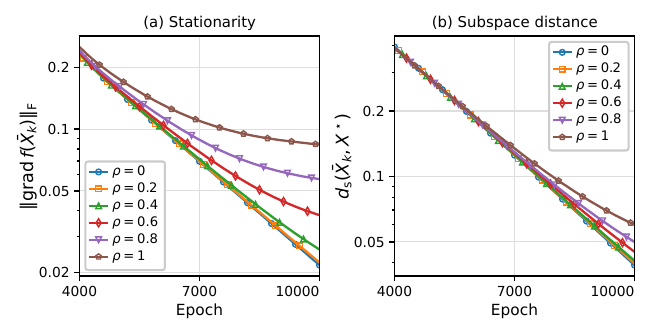}
 \caption{MNIST PCA with heterogeneous partitions.}
 \label{fig:heterogeneity}
\end{figure}

\subsection{Decentralized Low-Rank Matrix Completion}
\label{subsec:lrmc}

We recover a low-rank matrix from partial, noisy observations.
Let \(Y=[y_1,\ldots,y_m]\in\R^{p\times m}\) collect the noisy columns.
The columns are partitioned among \(n\) nodes, with \(\mathcal J_i\)
denoting the indices assigned to node \(i\) and \(m_i=|\mathcal J_i|\).
Node \(i\) minimizes, over \(X\in\St(p,r)\),
\begin{equation}
 f_i(X)=\frac{1}{m_i}\sum_{j\in\mathcal J_i}
 \min_{v\in\R^r}\frac12\|X_{\Omega_j}v-y_{j,\Omega_j}\|_2^2,
 \label{eq:lrmc-objective}
\end{equation}
where \(\Omega_j\subseteq\{1,\ldots,p\}\) contains the observed row indices
of column \(y_j\), and \(X_{\Omega_j}\) consists of the corresponding rows of \(X\).

\enlargethispage{-2\baselineskip}
We set \(p=100\), \(m=12000\), \(r=5\), and compare \(n=8,16,32\) nodes.
The data are generated as \(Y=UV+E\), where \(U\in\St(p,r)\) is the polar
factor of a Gaussian \(p\times r\) matrix, \(V\in\R^{r\times m}\) has
independent standard Gaussian entries, and \(E\in\R^{p\times m}\) has
independent zero-mean Gaussian entries with standard deviation \(10^{-3}\).
Each entry is observed independently with probability \(0.2\).
Columns are randomly divided equally among the nodes, so \(m_i=m/n\).
With a ring graph and batch size \(b=12\), each run completes 800 epochs,
with the same total sample budget across node counts. We use the constant stepsize
\begin{equation}
 \alpha_{\rm LRMC}(n)=\widehat\alpha_n\sqrt{n/800},
 \label{eq:lrmc-scalability-step}
\end{equation}
with \((\widehat\alpha_8,\widehat\alpha_{16},\widehat\alpha_{32})=(0.2,0.3,0.4)\).
Figure~\ref{fig:lrmc} shows aligned epoch curves and nearly linear speedup
in wall-clock time over the tested range of node counts.
The displayed ranges are 150 epochs and 8 seconds; all runs complete 800 epochs.
Additional experiments in Appendix~\ref{app:experiments} show stable
consensus behavior (Figure~\ref{fig:lrmc-consensus}) and further demonstrate
linear speedup (Figure~\ref{fig:paired-speedup}).

\begin{figure}[t]
 \centering
 \includegraphics[width=0.750\linewidth]{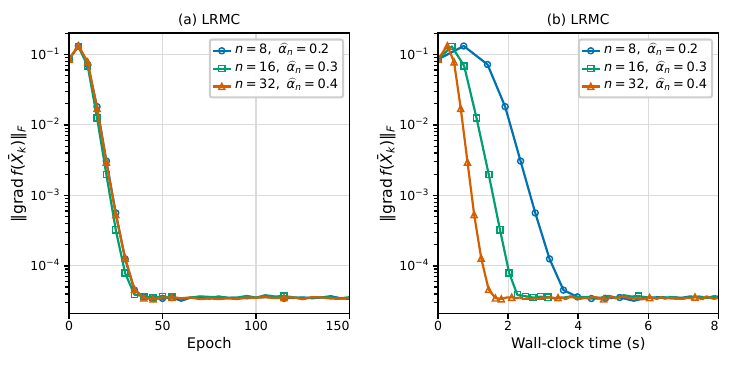}
 \caption{LRMC scalability with 8, 16, and 32 MPI nodes.}
 \label{fig:lrmc}
\end{figure}

\section{Conclusion}

\plados makes the plain decentralized stochastic-gradient recursion
retraction free on the Stiefel manifold while preserving its leading
stochastic and heterogeneity scales.  Its parameter-free landing choice
\(\gamma=1/3\) yields an RSI and a local contraction of distance to the
consensus Stiefel manifold along normal directions; a tangent-preserving polar
expansion then transfers this feasibility control to stochastic descent.
The resulting guarantee accommodates independent local mini-batches and
exhibits linear speedup with respect to the number of nodes after the stated network- and
heterogeneity-dependent transient, without a gradient tracker or repeated
orthonormalization.
Experiments demonstrate nearly linear wall-clock speedup and show that larger
data heterogeneity generally yields larger stationarity residuals at a fixed
stepsize.

\clearpage
\section*{AI Use Statement}
Generative AI tools were used to aid and polish the writing, including
language editing and checks of mathematical exposition. The authors take
full responsibility for the accuracy, integrity, and originality of the
paper.

\section*{Reproducibility Statement}
Appendices~\ref{app:geometry}--\ref{app:rates} provide the proofs, and
Appendix~\ref{app:experiments} describes the experimental implementation,
stepsize scaling, evaluation procedures, and heterogeneous data construction.
The code package is available from the authors upon reasonable request.

\endgroup
\bibliography{references}
\bibliographystyle{iclr2027_conference}

\clearpage
\appendix
\addtocontents{toc}{\protect\setcounter{tocdepth}{2}}
\begingroup
\renewcommand{\contentsname}{Appendix contents}
\tableofcontents
\endgroup
\section{Geometric estimates}
\label{app:geometry}

All norms in this appendix are Frobenius norms unless an operator norm is
displayed explicitly.  We first prove the polar expansion used for the
projected network average and then prove the parameter-free landing
contraction.  The
coefficient is kept as a general \(\gamma\in(0,1)\) in the latter argument and
is set to \(1/3\) only after the dependence on \(\gamma\) is visible.

\subsection{Polar projection with a product remainder}
\label{app:polar}

\begin{lemma}
\label{lem:young-app}
For matrices \(A,B\) of the same size and every \(\eta>0\),
\begin{align}
 2\ip AB&\le \eta\norm A_F^2+\eta^{-1}\norm B_F^2,
 \label{eq:young-inner-app}\\
 \norm{A+B}_F^2
 &\le(1+\eta)\norm A_F^2+(1+\eta^{-1})\norm B_F^2.
 \label{eq:young-square-app}
\end{align}
For nonnegative \(a,b\),
\begin{equation}
 (a+b)^4\le(1+\eta)^3a^4+(1+\eta^{-1})^3b^4.
 \label{eq:young-fourth-app}
\end{equation}
\end{lemma}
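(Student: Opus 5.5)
The plan is to derive all three inequalities from the elementary expansion of a nonnegative square. For \eqref{eq:young-inner-app}, expand
\[
0\le\norm{\sqrt\eta A-\eta^{-1/2}B}_F^2=\eta\norm A_F^2-2\ip AB+\eta^{-1}\norm B_F^2 ,
\]
which rearranges directly to the claim. This uses only bilinearity of the Frobenius inner product, so it holds for arbitrary matrices of equal size.

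For \eqref{eq:young-square-app}, I would write $\norm{A+B}_F^2=\norm A_F^2+2\ip AB+\norm B_F^2$. Then I bound the cross term by \eqref{eq:young-inner-app} with the same $\eta$, which gives $(1+\eta)\norm A_F^2+(1+\eta^{-1})\norm B_F^2$ immediately.

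For the fourth-power bound \eqref{eq:young-fourth-app}, I would use convexity of $t\mapsto t^4$ with weights $\lambda=1/(1+\eta)$ and $1-\lambda=\eta/(1+\eta)$. Writing $a+b=\lambda\cdot\frac{a}{\lambda}+(1-\lambda)\cdot\frac{b}{1-\lambda}$, Jensen's inequality gives
\[
(a+b)^4\le\lambda^{-3}a^4+(1-\lambda)^{-3}b^4 .
\]
Here $\lambda^{-1}=1+\eta$ and $(1-\lambda)^{-1}=(1+\eta)/\eta=1+\eta^{-1}$, which is exactly the stated bound.

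As an alternative route, one can apply the scalar version of \eqref{eq:young-square-app} to $(a+b)^2$, square the result, and apply the same weighted convexity once more. However, that route only yields exponent $2$ on each coefficient after careful bookkeeping, so I prefer the direct Jensen argument.

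There is no real obstacle here. The only point that needs care is choosing the convex weights so that both reciprocal factors match $1+\eta$ and $1+\eta^{-1}$ exactly. The case $a=0$ or $b=0$ is trivial and is also covered by the Jensen argument.
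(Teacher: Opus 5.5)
Your proof is correct and matches the paper's, which proves the fourth-power bound by the same Jensen argument with weights $\lambda=(1+\eta)^{-1}$ and $1-\lambda=\eta/(1+\eta)$ and treats the first two inequalities as the standard square-expansion facts you derive. One correction to your aside: the iterated-square route is not weaker. Applying the scalar bound to $(a+b)^2$ and then again to $x=(1+\eta)a^2$, $y=(1+\eta^{-1})b^2$ gives exactly $(1+\eta)^3a^4+(1+\eta^{-1})^3b^4$, and exponent $2$ could not hold in general, since $\eta=1$, $a=b=1$ gives $16\le 8$.
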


\begin{proof}
For \eqref{eq:young-fourth-app}, apply Jensen's inequality
\citep[Section~3.1.8]{Boyd2004Convex}
to \(t\mapsto t^4\) with weights
\(\lambda=(1+\eta)^{-1}\) and
\(1-\lambda=\eta/(1+\eta)\):
\[
 (a+b)^4
 \le \lambda^{-3}a^4+(1-\lambda)^{-3}b^4,
\]
which is \eqref{eq:young-fourth-app}.
\end{proof}

The polar derivative is standard \citep{Gawlik2017PolarDerivative}.
We use its normal-fiber structure to obtain the product remainder below.

\begin{lemma}
\label{lem:polar-app}
Let \(X=QH\) be the right polar decomposition, where
\(Q=P_{\St}(X)\), \(H=(X^\top X)^{1/2}=H^\top\succ0\), and
\(\sigma(H)\subset[s_-,s_+]\) with \(s_->0\).  If
\(\norm\Delta_F\le s_-/2\), then
\begin{align}
 P_{\St}(X+\Delta)-Q
 &=\mathcal L_X[P_Q\Delta]+\mathcal R_X(\Delta),
 \label{eq:polar-expansion-app}\\
 \norm{\mathcal L_X}_{\rm op}&\le s_-^{-1},
 \label{eq:polar-upper-app}\\
 \ip Z{\mathcal L_X[Z]}&\ge s_+^{-1}\norm Z_F^2,
 \qquad Z\in T_Q\St,
 \label{eq:polar-lower-app}\\
 \norm{\mathcal R_X(\Delta)}_F
 &\le C_p\norm{P_Q\Delta}_F\norm\Delta_F.
 \label{eq:polar-remainder-app}
\end{align}
Here
\(P_Q\Delta=\Delta-Q\sym(Q^\top\Delta)\),
\(\mathcal L_X=DP_{\St}(X)|_{T_Q\St}\), and
\(C_p\) depends only on \(s_-\) and \(s_+\).
\end{lemma}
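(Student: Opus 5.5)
We take the explicit derivative of the polar factor and control the remainder using the fiber structure of \(P_{\St}\). Write \(X=QH\). Differentiating \(Q(t)^\top Q(t)=I\) and \(X(t)=Q(t)H(t)\) in the direction \(E\), with \(\dot Q=Q\Omega+Q_\perp K\) for skew \(\Omega\), gives two conditions. The normal part is \(Q_\perp K=Q_\perp Q_\perp^\top E H^{-1}\). The \(Q\)-component satisfies the Sylvester equation \(\Omega H+H\Omega=Q^\top E-E^\top Q=2\,\mathrm{skew}(Q^\top E)\). Thus \(DP_{\St}(X)[E]\) depends only on \(P_QE=Q\,\mathrm{skew}(Q^\top E)+Q_\perp Q_\perp^\top E\), and it annihilates the normal directions \(Q\,\mathrm{sym}\). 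This justifies writing \(DP_{\St}(X)[E]=\mathcal L_X[P_QE]\).

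For \eqref{eq:polar-upper-app}, we diagonalize \(H\). In the eigenbasis the Sylvester solution is \(\Omega_{jk}=A_{jk}/(h_j+h_k)\) with \(A=2\,\mathrm{skew}(Q^\top E)\), so \(\|\Omega\|\le \|\mathrm{skew}(Q^\top E)\|/s_-\). Also \(\|Q_\perp Q_\perp^\top EH^{-1}\|\le s_-^{-1}\|Q_\perp Q_\perp^\top E\|\). Orthogonality of the two components then gives the bound \(s_-^{-1}\).

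For \eqref{eq:polar-lower-app}, take \(Z=Q\Omega_0+Q_\perp B\) with \(\Omega_0\) skew. Then
\[
\langle Z,\mathcal L_X Z\rangle=\sum_{jk}\frac{2(\Omega_0)_{jk}^2}{h_j+h_k}+\mathrm{tr}(B H^{-1}B^\top),
\]
and both terms are at least \(s_+^{-1}\) times the corresponding squared norms.

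For the remainder, we split \(\Delta=P_Q\Delta+N\) with \(N=Q S\) and \(S=\mathrm{sym}(Q^\top\Delta)\). The key observation is \(X+N=Q(H+S)\). Since \(\|S\|\le\|\Delta\|\le s_-/2\), the matrix \(H+S\) is symmetric positive definite with spectrum at least \(s_-/2\). Hence \(P_{\St}(X+N)=Q\) exactly: normal perturbations stay in the fiber of \(Q\). Writing \(X'=X+N\), which has polar factor \(Q\), the same \(P_Q\), and singular values in \([s_-/2,\,s_++\|\Delta\|]\), we get
\[
P_{\St}(X+\Delta)-Q=P_{\St}(X'+T)-P_{\St}(X'),\qquad T=P_Q\Delta .
\]
We apply Taylor's theorem with integral remainder along \(t\mapsto X'+tT\), \(t\in[0,1]\). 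This segment stays full rank because \(\|T\|\le\|\Delta\|\le s_-/2\). Together with the earlier bound, the smallest singular value along it is only bounded below by \(s_-/4\) or so, which is enough once constants are adjusted. The Taylor step yields
\[
P_{\St}(X+\Delta)-Q=\mathcal L_{X'}[T]+O(C\|T\|^2),
\]
where \(C\) bounds \(D^2P_{\St}\) on the compact set of matrices with singular values in \([s_-/4,\,2s_++s_-]\). Finally,
\[
\mathcal R_X(\Delta)=\bigl(\mathcal L_{X'}-\mathcal L_X\bigr)[T]+O(\|T\|^2).
\]
Both \(\mathcal L_{X'}\) and \(\mathcal L_X\) act on the same tangent space \(T_Q\St\). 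They differ only through \(H+S\) versus \(H\), and the Sylvester/inverse formulas are Lipschitz in \(H\) on that spectral range. This gives \(\|\mathcal L_{X'}-\mathcal L_X\|_{\rm op}\le C'\|S\|\le C'\|\Delta\|\). Combining, \(\|\mathcal R_X\|\le C\|T\|^2+C'\|T\|\|\Delta\|\le C_p\|P_Q\Delta\|\|\Delta\|\), using \(\|T\|\le\|\Delta\|\).

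The main obstacle is the product structure of the remainder: a naive Taylor bound gives only \(O(\|\Delta\|^2)\). The fiber identity \(P_{\St}(Q(H+S))=Q\) is exactly what removes the pure-normal quadratic term. The remaining care is verifying that all constants depend only on \(s_\pm\), via the uniform singular-value bounds above.
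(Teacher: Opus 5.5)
Your proposal is correct and follows essentially the paper's route: the same fiber identity $P_{\St}(Q(H+S))=Q$ removes the pure-normal part, and the same Sylvester-equation formula and eigenbasis computation give the bounds on $\mathcal L_X$; the only variation is that you split the remainder into a Taylor term at $X'=X+N$ plus a Lipschitz estimate $\|\mathcal L_{X'}-\mathcal L_X\|_{\mathrm{op}}\le C'\|S\|$, whereas the paper bounds $\int_0^1\{DP_{\St}(X+\Delta_N+t\Delta_T)-DP_{\St}(X)\}[\Delta_T]\,dt$ directly with a uniform second-derivative bound. One small repair is needed: full rank along $X'+tT$ does not follow from $\|T\|\le s_-/2$ and $\sigma_{\min}(X')\ge s_-/2$ (Weyl then only gives $\ge 0$), so use $N\perp T$, which gives $\|N+tT\|_F\le\|\Delta\|_F$ and hence $\sigma_{\min}(X+N+tT)\ge s_- - \|\Delta\|_F\ge s_-/2$, exactly as the paper does.
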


\begin{proof}
The identities \(Q^\top Q=I_r\) and \(X=QH\) give
\begin{equation}
 X^\top X=H Q^\top QH=H^2,
 \qquad
 \sigma_j(X)=\sqrt{\lambda_j(X^\top X)}
 =\lambda_j(H)=\sigma_j(H),
 \quad j=1,\ldots,r.
 \label{eq:polar-singular-values-app}
\end{equation}
The last equality uses \(H\succ0\) \citep[Lemma~2.1]{Higham1986Polar}.
Thus the spectral bounds imposed on
\(H\) are exactly the bounds on the singular values of \(X\).

Only in this proof, put
\[
 \Delta_T:=P_Q\Delta,
 \qquad
 \Delta_N:=Q\sym(Q^\top\Delta).
\]
Then \(\Delta_T\in T_Q\St\), \(\Delta_N\in N_Q\St\), and
\(\Delta=\Delta_T+\Delta_N\).  These two components are orthogonal.  Indeed,
if \(A=\sym(Q^\top\Delta)\), then
\[
 \ip{QA}{\Delta_T}
 =\operatorname{tr}(A Q^\top\Delta_T)=0,
\]
because \(A\) is symmetric whereas \(Q^\top\Delta_T\) is skew symmetric.

The normal segment has constant polar factor.  For \(0\le t\le1\),
\[
 X+t\Delta_N=Q(H+tA),
 \qquad
 \lambda_{\min}(H+tA)
 \ge s_- -\norm\Delta_F\ge s_-/2.
\]
Thus \(H+tA\succ0\), and uniqueness of the right polar decomposition gives
\begin{equation}
 P_{\St}(X+t\Delta_N)=Q,
 \qquad 0\le t\le1.
 \label{eq:normal-fiber-app}
\end{equation}

Every point on the segments below has smallest singular value at least
\(s_-/2\), by Weyl's inequality, and spectral norm at most
\(s_++s_-/2\).  The polar factor is smooth on the full-column-rank set, so
its second derivative is uniformly bounded on the resulting compact set.
Choose \(C_p\) to be twice such a bound.  By
\eqref{eq:normal-fiber-app} and the fundamental theorem of calculus,
\begin{align*}
 P_{\St}(X+\Delta)-P_{\St}(X)
 &=\int_0^1
 DP_{\St}(X+\Delta_N+t\Delta_T)[\Delta_T]dt\\
 &=DP_{\St}(X)[\Delta_T]+\mathcal R_X(\Delta),
\end{align*}
where
\[
 \mathcal R_X(\Delta)
 :=\int_0^1\!
 \{DP_{\St}(X+\Delta_N+t\Delta_T)-DP_{\St}(X)\}
 [\Delta_T]dt.
\]
Applying the fundamental theorem once more to the derivative yields
\begin{align*}
 \norm{\mathcal R_X(\Delta)}_F
 &\le \frac{C_p}2
 \int_0^1(\norm{\Delta_N}_F+t\norm{\Delta_T}_F)
 \norm{\Delta_T}_F\,dt\\
 &\le C_p\norm\Delta_F\norm{\Delta_T}_F.
\end{align*}
This proves equations \plaineqref{eq:polar-expansion-app} and
\plaineqref{eq:polar-remainder-app}.  In particular, a purely normal
perturbation has zero remainder and does not change the polar factor.

It remains to derive the formula for \(\mathcal L_X\) and prove its two
bounds.  For \(Z\in T_Q\St\), write the right polar decomposition of
\(X+tZ\) as \(Q(t)H(t)\) for \(t\) near zero, and put
\(\dot Q=Q'(0)\), \(\dot H=H'(0)\), and
\(\Theta=Q^\top\dot Q\).  Differentiating \(Q(t)^\top Q(t)=I_r\) shows
that \(\Theta^\top=-\Theta\), while differentiating
\(X+tZ=Q(t)H(t)\) gives \(Z=\dot QH+Q\dot H\).
Left multiplication by \(Q^\top\), followed by subtraction of the
transpose, eliminates the symmetric matrix \(\dot H\) and gives
\begin{equation}
 H\Theta+\Theta H=Q^\top Z-Z^\top Q=2Q^\top Z.
 \label{eq:polar-sylvester-app}
\end{equation}
Because every eigenvalue sum \(h_i+h_j\) is positive, this equation has a
unique skew-symmetric solution.  Projecting the differentiated factorization
onto the orthogonal complement of \(Q\) gives
\((I-QQ^\top)\dot Q=(I-QQ^\top)ZH^{-1}\).  Consequently,
\begin{equation}
 \mathcal L_X[Z]=Q\Theta+(I-QQ^\top)ZH^{-1},
 \label{eq:polar-derivative-app}
\end{equation}
in agreement with the standard Fr\'echet derivative formula
\citep{Higham1986Polar,Gawlik2017PolarDerivative}.
Diagonalize \(H=V\operatorname{diag}(h_1,\ldots,h_r)V^\top\) and set
\(B=V^\top Q^\top ZV\).  Because \(Q^\top Z\) is skew symmetric,
\[
 (V^\top\Theta V)_{ij}=\frac{2B_{ij}}{h_i+h_j}.
\]
Consequently,
\[
 \norm\Theta_F\le s_-^{-1}\norm{Q^\top Z}_F,
 \qquad
 \norm{(I-QQ^\top)ZH^{-1}}_F
 \le s_-^{-1}\norm{(I-QQ^\top)Z}_F.
\]
The two terms in \eqref{eq:polar-derivative-app} are orthogonal, as are the
corresponding components of \(Z\); squaring and adding proves
\eqref{eq:polar-upper-app}.  The same eigenbasis calculation gives
\[
 \ip{Q^\top Z}{\Theta}
 =\sum_{i,j}\frac{2B_{ij}^2}{h_i+h_j}
 \ge s_+^{-1}\norm{Q^\top Z}_F^2.
\]
Together with \(H^{-1}\succeq s_+^{-1}I\) in the orthogonal component, this
proves \eqref{eq:polar-lower-app}.  Since the derivative of the
Stiefel-valued curve \(P_{\St}(X+tZ)\) lies in \(T_Q\St\), the map
\(\mathcal L_X\) is an endomorphism of this finite-dimensional tangent
space.  Cauchy--Schwarz and \eqref{eq:polar-lower-app} further give
\[
 \norm{\mathcal L_X[Z]}_F\ge s_+^{-1}\norm Z_F.
\]
Thus \(\mathcal L_X\) is injective and hence invertible on
\(T_Q\St\), with \(\norm{\mathcal L_X^{-1}}_{\rm op}\le s_+\), which
justifies the uniform invertibility asserted in Lemma~\ref{lem:polar-main}.
\end{proof}

\subsection{Parameter-free landing contraction and the RSI}

For this subsection, let \(\gamma\in(0,1)\) and define
\[
 \phi_\gamma(X)=X-\gamma X(X^\top X-I_r),
 \qquad
 \Phi_\gamma(\Xb)=(\phi_\gamma(X_1),\ldots,\phi_\gamma(X_n)),
 \qquad
 F_\gamma=\Wb\Phi_\gamma.
\]

\begin{lemma}
\label{lem:normal-split}
At \(\bar\Xb=\one\otimes Q\in\Cman\),
\begin{equation}
 N_{\bar\Xb}\Cman=
 (\one\otimes N_Q\St)\oplus
 (\one^\perp\otimes\R^{p\times r}),
 \label{eq:normal-split-app}
\end{equation}
and the sum is orthogonal, where
\(N_Q\St=\{QA:A=A^\top\}\).
\end{lemma}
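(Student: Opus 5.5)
The plan is to compute the tangent space \(T_{\bar\Xb}\Cman\) explicitly and then take its orthogonal complement in \((\R^{p\times r})^n\) with the Frobenius inner product \(\ip{\Xb}{\mathbf Y}=\sum_i\ip{X_i}{Y_i}\).

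\textbf{Step 1: the tangent space.} Observe that \(\Cman\) is the image of the smooth injective immersion \(\St(p,r)\ni Q\mapsto\one\otimes Q\). Differentiating along curves \(Q(t)\subset\St\) shows
\[
 T_{\bar\Xb}\Cman=\one\otimes T_Q\St=\{(Z,\ldots,Z):Z\in T_Q\St\}.
\]
I will take as given the standard description \(T_Q\St=\{Z:Q^\top Z+Z^\top Q=0\}\). Its Frobenius orthogonal complement in \(\R^{p\times r}\) is \(N_Q\St=\{QA:A=A^\top\}\). The inclusion \(\supseteq\) for this complement follows from the trace computation already used in the proof of Lemma~\ref{lem:polar-app}. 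Equality then follows by dimension count: \(\dim T_Q\St=pr-r(r+1)/2\), while \(\dim N_Q\St=r(r+1)/2\) because \(A\mapsto QA\) is injective.

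\textbf{Step 2: orthogonal decomposition of the ambient space.} I would split
\[
 (\R^{p\times r})^n=(\one\otimes\R^{p\times r})\oplus(\one^\perp\otimes\R^{p\times r}),
\]
where \(\one^\perp\otimes\R^{p\times r}\) means stacks with \(\sum_iY_i=0\). These two pieces are orthogonal, since \(\ip{\one\otimes M}{\mathbf Y}=\ip{M}{\sum_iY_i}\). The first piece further splits orthogonally as \((\one\otimes T_Q\St)\oplus(\one\otimes N_Q\St)\). This holds because \(\ip{\one\otimes Z}{\one\otimes N}=n\ip ZN=0\) for \(Z\in T_Q\St\) and \(N\in N_Q\St\).

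\textbf{Step 3: identify the normal space.} From Step 2, the right-hand side of \eqref{eq:normal-split-app} is orthogonal to \(\one\otimes T_Q\St=T_{\bar\Xb}\Cman\). Together with \(\one\otimes T_Q\St\), it spans the whole ambient space. Hence it equals the orthogonal complement \(N_{\bar\Xb}\Cman\), and the sum in \eqref{eq:normal-split-app} is orthogonal, as claimed.

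As a sanity check on dimensions, \(r(r+1)/2+(n-1)pr+pr-r(r+1)/2=npr\).

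The argument is elementary throughout. The only point needing any care is Step 1, namely justifying that \(\Cman\) is an embedded submanifold whose tangent space is the diagonal copy of \(T_Q\St\). This follows because the immersion \(Q\mapsto\one\otimes Q\) is a homeomorphism onto its image: its inverse is the restriction of the continuous map \(\Xb\mapsto X_1\).
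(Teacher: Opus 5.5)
Your proof is correct and takes essentially the same route as the paper. Both identify $T_{\bar\Xb}\Cman=\one\otimes T_Q\St$ and use the pairing $\langle\one\otimes M,\mathbf Y\rangle=\langle M,\sum_iY_i\rangle$ to split stacks into a consensus part and a zero-average part; the paper phrases this as the criterion $H\in N_{\bar\Xb}\Cman\iff\widehat H\in N_Q\St$, while you phrase it as an orthogonal three-way decomposition of the ambient space whose non-tangent pieces must form the complement, and you additionally justify the Stiefel normal space (via the dimension count) and the embeddedness of $\Cman$, which the paper cites or asserts.
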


\begin{proof}
The embedded Stiefel tangent space is
\(T_Q\St=\{Z:Q^\top Z+Z^\top Q=0\}\), whose Frobenius-orthogonal
complement is \(N_Q\St=\{QA:A=A^\top\}\)
\citep[Section~7.3]{Boumal2023Manifolds}. Hence
\(T_{\bar\Xb}\Cman=\{\one\otimes Z:Z\in T_Q\St\}\).  For a stack
\(H=(H_1,\ldots,H_n)\), let \(\widehat H=n^{-1}\sum_iH_i\).  Then
\[
 \sum_i\ip{H_i}{Z}=n\ip{\widehat H}{Z}
 \quad\text{for every }Z\in T_Q\St.
\]
Thus \(H\in N_{\bar\Xb}\Cman\) if and only if
\(\widehat H\in N_Q\St\).  In that case,
\[
 H=\one\otimes\widehat H+(H-\one\otimes\widehat H),
\]
where the second term has zero network average.  This proves
\eqref{eq:normal-split-app}; orthogonality follows from the zero average.
\end{proof}

\begin{lemma}
\label{lem:normal-derivative}
Let
\(\rho_\star(\gamma)=\max\{\sigma_2(W),|1-2\gamma|\}\).  At every
\(\bar\Xb\in\Cman\),
\begin{align}
 DF_\gamma(\bar\Xb)[u]&=u,
 &&u\in T_{\bar\Xb}\Cman,
 \label{eq:tangent-identity-app}\\
 \norm{DF_\gamma(\bar\Xb)[v]}_F
 &\le\rho_\star(\gamma)\norm v_F,
 &&v\in N_{\bar\Xb}\Cman.
 \label{eq:normal-linearization-app}
\end{align}
Moreover, choose
\(\rho\in(\rho_\star(\gamma),1)\), and suppose
\begin{equation}
 \gamma(6R+3R^2)\le\rho-\rho_\star(\gamma).
 \label{eq:normal-radius-app}
\end{equation}
If \(\bar\Xb\in\Cman\), \(v\in N_{\bar\Xb}\Cman\),
\(\norm v_F=1\), and \(\norm{\mathbf Y-\bar\Xb}_F\le R\), then
\begin{equation}
 \norm{DF_\gamma(\mathbf Y)[v]}_F\le\rho.
 \label{eq:normal-derivative-app}
\end{equation}
\end{lemma}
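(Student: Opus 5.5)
The plan is to compute the derivative of the landing map explicitly, evaluate it at a feasible consensus point using the orthogonal splitting of Lemma~\ref{lem:normal-split}, and then control the change of the derivative on the radius-\(R\) ball by a direct Lipschitz estimate.

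\emph{Derivative formula.} For a single block, differentiating \(\phi_\gamma(X)=X-\gamma X(X^\top X-I_r)\) gives
\[
 D\phi_\gamma(X)[V]=V-\gamma V(X^\top X-I_r)-\gamma X(V^\top X+X^\top V).
\]
At \(X=Q\in\St(p,r)\) this reduces to \(M[V]:=V-2\gamma Q\sym(Q^\top V)\). Hence \(M\) is the identity on \(T_Q\St\) and equals multiplication by \(1-2\gamma\) on \(N_Q\St=\{QA:A=A^\top\}\). In particular \(M\) is self-adjoint with \(\norm{M}_{\rm op}\le\max\{1,|1-2\gamma|\}=1\).

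At \(\bar\Xb=\one\otimes Q\) we have \(DF_\gamma(\bar\Xb)=\Wb\,(I_n\otimes M)\). The blockwise map \(I_n\otimes M\) commutes with \(\Wb=W\otimes I_p\), and it preserves both \(\one\otimes\R^{p\times r}\) and \(\one^\perp\otimes\R^{p\times r}\).

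\begin{itemize}
\item \emph{Tangent directions.} For \(u=\one\otimes Z\) with \(Z\in T_Q\St\), we get \(M[Z]=Z\) and \(W\one=\one\), which gives \eqref{eq:tangent-identity-app}.
\item \emph{Normal directions.} For normal \(v\), use Lemma~\ref{lem:normal-split} to write \(v=\one\otimes QA+h\) with \(h\in\one^\perp\otimes\R^{p\times r}\). The first piece is mapped to \((1-2\gamma)\,\one\otimes QA\). The second piece is mapped to \(\Wb(I_n\otimes M)h\). Since \((I_n\otimes M)h\) still has zero network average and norm at most \(\norm h_F\), and \(\Wb\) contracts zero-average stacks by \(\sigma_2(W)\), this image has norm at most \(\sigma_2(W)\norm h_F\).
\item The two images remain orthogonal, one lying in \(\one\otimes\cdot\) and the other in \(\one^\perp\otimes\cdot\). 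Therefore \(\norm{DF_\gamma(\bar\Xb)[v]}_F^2\le(1-2\gamma)^2n\norm{QA}_F^2+\sigma_2(W)^2\norm h_F^2\le\rho_\star(\gamma)^2\norm v_F^2\), which is \eqref{eq:normal-linearization-app}.
\end{itemize}
The one point needing care is that \(M\) is applied identically in every block, so it commutes with mixing; without this the normal splitting would not be preserved.

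\emph{Perturbation bound.} Let \(\mathbf Y=(Y_1,\ldots,Y_n)\), put \(E_i=Y_i-Q\), and note \(\norm{E_i}_F\le\norm{\mathbf Y-\bar\Xb}_F\le R\). Expanding,
\[
 Y_i^\top Y_i-I_r=Q^\top E_i+E_i^\top Q+E_i^\top E_i,
\]
whose norm is at most \(2R+R^2\). Similarly,
\[
 Y_iV^\top Y_i-QV^\top Q=E_iV^\top Q+QV^\top E_i+E_iV^\top E_i,
\]
and the analogous identity holds for \(Y_iY_i^\top V-QQ^\top V\); each has norm at most \((2R+R^2)\norm V_F\). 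These bounds use \(\norm Q_2=1\) and \(\norm{AB}_F\le\norm A_F\norm B_F\).

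Summing the three contributions gives
\[
 \norm{D\phi_\gamma(Y_i)[V]-M[V]}_F\le\gamma(6R+3R^2)\norm V_F.
\]
Squaring and summing over blocks yields the same bound for the stacked difference. Since \(\norm{\Wb}_{\rm op}=1\), we obtain
\[
 \norm{DF_\gamma(\mathbf Y)[v]-DF_\gamma(\bar\Xb)[v]}_F\le\gamma(6R+3R^2).
\]

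\emph{Conclusion.} By the triangle inequality, \eqref{eq:normal-linearization-app}, and \eqref{eq:normal-radius-app},
\[
 \norm{DF_\gamma(\mathbf Y)[v]}_F\le\rho_\star(\gamma)+\gamma(6R+3R^2)\le\rho,
\]
which proves \eqref{eq:normal-derivative-app}.
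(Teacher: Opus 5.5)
Your proof is correct and follows the paper's argument essentially step for step. The shared steps are: the blockwise derivative at $Q$, acting as the identity on $T_Q\St$ and as multiplication by $1-2\gamma$ on $N_Q\St$; the orthogonal consensus-normal/zero-average split of $v$, with $\sigma_2(W)$ contraction on the zero-average part; and the $\gamma(6R+3R^2)$ Lipschitz bound on the blockwise derivative, combined through $\|\Wb\|_2=1$ and the triangle inequality. The only difference is cosmetic: you split the $Y(V^\top Y+Y^\top V)$ difference into two $(2R+R^2)$ pieces instead of the paper's single $(4R+2R^2)$ bound, which gives the same constant.
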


\begin{proof}
Write \(\bar\Xb=\one\otimes Q\).  At the feasible block \(Q\),
\[
 D\phi_\gamma(Q)[H]
 =H-\gamma Q(H^\top Q+Q^\top H).
\]
Every ambient matrix has the orthogonal decomposition
\(H=P_QH+Q\sym(Q^\top H)\).  The derivative is the identity on the first
component and multiplication by \(1-2\gamma\) on the second.  In particular,
its operator norm is one.

If \(u\in T_{\bar\Xb}\Cman\), then
\(u=\one\otimes H\) for some \(H\in T_Q\St\).  The blockwise landing
derivative fixes \(u\), and \(W\one=\one\), so communication also fixes it. This
proves \eqref{eq:tangent-identity-app}.

By Lemma~\ref{lem:normal-split}, write the orthogonal decomposition
\[
 v=\one\otimes QA+z,
 \qquad A=A^\top,
 \qquad \widehat z=0.
\]
Communication fixes the consensus-normal component, on which the landing derivative
acts by \(1-2\gamma\).  The same blockwise derivative maps the zero-average
subspace into itself: its network average is
\(D\phi_\gamma(Q)[\widehat z]=0\).  Afterward \(W-J\) contracts it by
\(\sigma_2(W)=\norm{W-J}_2\).  The two images remain orthogonal, and hence
\begin{align*}
 \norm{DF_\gamma(\bar\Xb)[v]}_F^2
 &\le |1-2\gamma|^2\norm{\one\otimes QA}_F^2
 +\sigma_2(W)^2\norm z_F^2\\
 &\le\rho_\star(\gamma)^2\norm v_F^2.
\end{align*}
By homogeneity this proves \eqref{eq:normal-linearization-app} for every
normal vector.

It remains to control the derivative away from \(\Cman\).  For one block,
write \(Y=Q+E\).  Direct differentiation gives
\[
 D\phi_\gamma(Y)[H]
 =H-\gamma\{H(Y^\top Y-I_r)+Y(H^\top Y+Y^\top H)\}.
\]
The first difference from the corresponding expression at \(Q\) is bounded
by \((2\norm E_F+\norm E_F^2)\norm H_F\).  Expanding the second difference
into
\[
 E(H^\top Q+Q^\top H)
 +Q(H^\top E+E^\top H)
 +E(H^\top E+E^\top H)
\]
bounds it by \((4\norm E_F+2\norm E_F^2)\norm H_F\).  Therefore
\begin{equation}
 \norm{(D\phi_\gamma(Y)-D\phi_\gamma(Q))[H]}_F
 \le\gamma(6R+3R^2)\norm H_F
 \label{eq:landing-derivative-lipschitz-app}
\end{equation}
whenever \(\norm E_F\le R\).  Applying this blockwise, using
\(\norm{W}_2=1\), and then combining it with the contraction at
\(\bar\Xb\) gives
\[
 \norm{DF_\gamma(\mathbf Y)[v]}_F
 \le\{\rho_\star(\gamma)+\gamma(6R+3R^2)\}\norm v_F
 \le\rho,
\]
which is \eqref{eq:normal-derivative-app}.
\end{proof}

\begin{proof}[Proof of Lemma~\ref{lem:rsi} and Theorem~\ref{thm:rsi}]
By the smooth nearest-point projection theorem
\citep[Theorem~5.53]{Boumal2023Manifolds}, shrink \(R\), if necessary, so
that the radius-\(R\) neighborhood is a tubular neighborhood of \(\Cman\).
Let
\(\bar\Xb=\one\otimes\bar X\) be the unique nearest point to \(\Xb\).
The tubular-neighborhood property gives
\(\Xb-\bar\Xb\in N_{\bar\Xb}\Cman\), and every point of the normal
segment
\[
 \bar\Xb+t(\Xb-\bar\Xb),\qquad 0\le t\le1,
\]
is within distance \(R\) of \(\bar\Xb\).  Moreover,
\(F_\gamma(\bar\Xb)=\bar\Xb\).  The fundamental theorem of calculus and
Lemma~\ref{lem:normal-derivative} therefore give the stronger,
fixed-representative estimate
\begin{align}
 \norm{F_\gamma(\Xb)-\bar\Xb}_F
 &=\left\|\int_0^1DF_\gamma(\bar\Xb+t(\Xb-\bar\Xb))
 [\Xb-\bar\Xb]dt\right\|_F\notag\\
 &\le\rho\norm{\Xb-\bar\Xb}_F.
 \label{eq:fixed-representative-app}
\end{align}
Taking the distance from \(F_\gamma(\Xb)\) to the nearest point in
\(\Cman\) can only decrease the left-hand side, so
\(d(F_\gamma(\Xb))\le\rho d(\Xb)\).

Now set \(\gamma=1/3\), for which
\(\rho_\star(1/3)=\max\{\sigma_2(W),1/3\}\).  For example, after also imposing
the tubular-radius restriction, any
\[
 0<R\le\min\left\{\frac18,
 -1+\sqrt{1+\rho-\max\{\sigma_2(W),1/3\}}\right\}
\]
satisfies \eqref{eq:normal-radius-app}.  This proves the contraction theorem.

Finally, let \(F=F_{1/3}\) and
\(\Psi(\Xb)=\Xb-F(\Xb)\). With
\(e=\Xb-\bar\Xb\) and \(r=\Psi(\Xb)\), the fixed-representative estimate
is \(\norm{e-r}_F^2\le\rho^2\norm e_F^2\).  Expanding its left-hand side
and rearranging gives the restricted secant inequality
\begin{equation}
 \ip{\Psi(\Xb)}{\Xb-\bar\Xb}
 \ge\frac12\norm{\Psi(\Xb)}_F^2
 +\frac{1-\rho^2}{2}d(\Xb)^2.
 \label{eq:rsi-app}
\end{equation}
Thus both the contraction and the RSI use the same fixed nearest
representative. Conversely, substituting the RSI back into the
exact identity
\[
 \norm{F(\Xb)-\bar\Xb}_F^2
 =d(\Xb)^2+\norm{\Psi(\Xb)}_F^2
 -2\ip{\Psi(\Xb)}{\Xb-\bar\Xb}
\]
recovers the fixed-representative contraction, so the two estimates are
equivalent on this tube.

Finally, if \(\Xb^{t+1}=F(\Xb^t)\) and \(d(\Xb^0)\le R\), then
\(d(\Xb^{t+1})\le\rho d(\Xb^t)\le R\).  Induction gives
\(d(\Xb^t)\le\rho^t d(\Xb^0)\). At \(\gamma=1/3\), equations
\plaineqref{eq:tangent-identity-app} and
\plaineqref{eq:normal-linearization-app} also give, at every
\(\bar\Xb\in\Cman\),
\begin{equation}
 DF(\bar\Xb)[u]=u\quad(u\in T_{\bar\Xb}\Cman),
 \qquad
 \norm{DF(\bar\Xb)|_{N_{\bar\Xb}\Cman}}_{\rm op}
 \le \rho_\star<1.
 \label{eq:restricted-linearization}
\end{equation}
This identifies the tangent and normal behavior of the linearization.
\end{proof}

\section{Stochastic recursion and one-step descent}
\label{app:convergence}

Let \(\mathcal F_k\) be the sigma-field generated by the initialization and
all mini-batches (and any oracle randomness) strictly before time \(k\).
Equivalently, it is the history immediately before the time-\(k\)
mini-batches are drawn. Thus \(\Xb_k\),
\(\bar X_k\), and all population gradients evaluated at \(\Xb_k\) are
\(\mathcal F_k\)-measurable.  Compactness and
Assumptions~\ref{ass:regularity}--\ref{ass:network} give finite constants
\(L_g,L_f,G_f\) such that, throughout the tube,
\begin{align}
 \norm{P_X\nabla f_i(X)-P_Y\nabla f_i(Y)}_F
 &\le L_g\norm{X-Y}_F,
 \label{eq:local-direction-lipschitz-app}\\
 \norm{\grad f(Q)}_F&\le G_f,
 \qquad Q\in\St,
 \label{eq:global-gradient-bound-app}\\
 f(Q')-f(Q)
 &\le\ip{\grad f(Q)}{Q'-Q}
 +\frac{L_f}{2}\norm{Q'-Q}_F^2,
 \qquad Q,Q'\in\St.
 \label{eq:chord-smoothness-app}
\end{align}
These bounds follow from the stated local regularity on the compact tube.
Since \(P_X\) is uniformly bounded there, we enlarge \(G\) so that
\(G\ge1\) bounds both the batch gradient and \(\|S_{i,k}\|_F\).
This common bound will be used in the pathwise tube-invariance argument.

\subsection{Noise and projected-gradient estimates}

Define
\[
 G_{i,k}:=P_{X_{i,k}}\nabla f_i(X_{i,k}),
 \qquad
 \Xi_{i,k}:=S_{i,k}-G_{i,k},
\]
and let \(\Gb_k\) and \(\boldsymbol\Xi_k\) denote the corresponding stacks.
The raw-oracle assumption, the \(\mathcal F_k\)-measurability of
\(X_{i,k}\), and linearity of \(U\mapsto P_{X_{i,k}}U\) give
\[
 \E[S_{i,k}\mid\mathcal F_k]
 =P_{X_{i,k}}\E\!\left[\frac1b\sum_{\ell=1}^b
 \nabla_XF_i(X_{i,k};\zeta_{i,k}^{(\ell)})
 \,\middle|\,\mathcal F_k\right]
 =P_{X_{i,k}}\nabla f_i(X_{i,k})=G_{i,k}.
\]
Thus \(\Sb_k=\Gb_k+\boldsymbol\Xi_k\) and
\(\E[\Xi_{i,k}\mid\mathcal F_k]=0\). Because conditionally independent raw mini-batches
remain conditionally independent after the deterministic maps
\(U\mapsto P_{X_{i,k}}U\), we have
\begin{align}
 \E[\Xi_{i,k}\mid\mathcal F_k]&=0,
 \label{eq:noise-zero-app}\\
 \E[\norm{\widehat{\boldsymbol\Xi}_k}_F^2\mid\mathcal F_k]
 &\le\frac{\sigma^2}{nb},
 \label{eq:average-noise-app}\\
 \E[\norm{(I-\Jb)\boldsymbol\Xi_k}_F^2\mid\mathcal F_k]
 &\le\frac{n\sigma^2}{b}.
 \label{eq:centered-noise-app}
\end{align}
Indeed, expanding the first squared norm conditionally gives
\begin{align*}
 \E\!\left[\left\|\frac1n\sum_i\Xi_{i,k}\right\|_F^2
 \,\middle|\,\mathcal F_k\right]
 &=\frac1{n^2}\sum_i\E[\norm{\Xi_{i,k}}_F^2\mid\mathcal F_k]\\
 &\quad+\frac1{n^2}\sum_{i\ne j}
 \E[\ip{\Xi_{i,k}}{\Xi_{j,k}}\mid\mathcal F_k].
\end{align*}
For \(i\ne j\), conditional independence and \eqref{eq:noise-zero-app}
give
\[
 \E[\ip{\Xi_{i,k}}{\Xi_{j,k}}\mid\mathcal F_k]
 =\ip{\E[\Xi_{i,k}\mid\mathcal F_k]}
 {\E[\Xi_{j,k}\mid\mathcal F_k]}=0.
\]
This proves \eqref{eq:average-noise-app}.  Since
\(I-\Jb\) is an orthogonal projector,
\[
 \E[\norm{(I-\Jb)\boldsymbol\Xi_k}_F^2\mid\mathcal F_k]
 \le\sum_i\E[\norm{\Xi_{i,k}}_F^2\mid\mathcal F_k],
\]
which proves \eqref{eq:centered-noise-app}.

At the common feasible point \(\bar X_k\), linearity of
\(P_{\bar X_k}\) and the definition of \(f\) give
\(n^{-1}\sum_i\grad f_i(\bar X_k)=\grad f(\bar X_k)\). Moreover,
nonexpansiveness of the feasible tangent projection and the heterogeneity
assumption imply
\begin{equation}
 \sum_i\norm{\grad f_i(\bar X_k)-\grad f(\bar X_k)}_F^2
 \le\sum_i\norm{\nabla f_i(\bar X_k)-\nabla f(\bar X_k)}_F^2
 \le n\xi^2.
 \label{eq:feasible-heterogeneity-app}
\end{equation}
By \eqref{eq:local-direction-lipschitz-app} and Jensen's inequality,
\begin{align}
 \norm{\widehat G_k-\grad f(\bar X_k)}_F^2
 &=\left\|\frac1n\sum_i(G_{i,k}-\grad f_i(\bar X_k))\right\|_F^2\notag\\
 &\le\frac1n\sum_i\norm{G_{i,k}-\grad f_i(\bar X_k)}_F^2
 \le\frac{L_g^2}{n}d(\Xb_k)^2.
 \label{eq:average-gradient-gap-app}
\end{align}
For the centered stack, insert the local gradients evaluated at
\(\bar X_k\). Using \(\norm{I-\Jb}_2\le1\),
\(\norm{A+B}_F^2\le2\norm A_F^2+2\norm B_F^2\), and
\eqref{eq:feasible-heterogeneity-app} gives
\begin{align*}
 \norm{(I-\Jb)\Gb_k}_F^2
 &\le2\sum_i\norm{G_{i,k}-\grad f_i(\bar X_k)}_F^2
 +2\sum_i\norm{\grad f_i(\bar X_k)-\grad f(\bar X_k)}_F^2\\
 &\le2L_g^2d(\Xb_k)^2+2n\xi^2.
\end{align*}
The population stack is measurable and the noise has zero conditional mean,
so the conditional cross term vanishes.  Together with
\eqref{eq:centered-noise-app}, this yields
\begin{equation}
 \E[\norm{(I-\Jb)\Sb_k}_F^2\mid\mathcal F_k]
 \le2L_g^2d(\Xb_k)^2+2n\xi^2+\frac{n\sigma^2}{b}.
 \label{eq:centered-signal-app}
\end{equation}
The same orthogonality for the averaged noise gives
\begin{align}
 \E[\norm{\widehat S_k}_F^2\mid\mathcal F_k]
 &=\norm{\widehat G_k}_F^2
 +\E[\norm{\widehat{\boldsymbol\Xi}_k}_F^2\mid\mathcal F_k]\notag\\
 &\le2\norm{\grad f(\bar X_k)}_F^2+\frac{2L_g^2}{n}d(\Xb_k)^2
 +\frac{\sigma^2}{nb}.
 \label{eq:average-signal-app}
\end{align}
Finally, the almost-sure oracle bound, enlarged at the start of this appendix,
implies the pathwise estimates
\begin{equation}
 \norm{\Sb_k}_F\le\sqrt nG,
 \qquad
 \norm{\widehat S_k}_F\le G.
 \label{eq:pathwise-signal-app}
\end{equation}

\subsection{The disagreement recursion}

The next estimate controls the normal component of the stochastic direction
at \(\bar X_k\). Using \((I-P_{\bar X_k})P_{\bar X_k}=0\), we obtain
\begin{align*}
 (I-P_{\bar X_k})S_{i,k}
 &=(I-P_{\bar X_k})(P_{X_{i,k}}-P_{\bar X_k})
 \left(\frac1b\sum_{\ell=1}^b
 \nabla_XF_i(X_{i,k};\zeta_{i,k}^{(\ell)})\right).
\end{align*}
From
\[
 (P_X-P_{\bar X_k})U
 =-(X-\bar X_k)\sym(X^\top U)
 -\bar X_k\sym((X-\bar X_k)^\top U)
\]
and \(\norm X_2\le1+R\) in the tube, we obtain
\begin{equation}
 \norm{(I-P_{\bar X_k})S_{i,k}}_F
 \le L_P G\,\norm{X_{i,k}-\bar X_k}_F,
 \qquad L_P:=2+R.
 \label{eq:normal-signal-app}
\end{equation}

\begin{lemma}
\label{lem:tangent-perturb}
Fix an iteration \(k\) and let \(\mathbf Y_k=F(\Xb_k)\). If
\(\alpha_k\norm{\widehat S_k}_F\le1/2\), then, for every \(\eta>0\),
\begin{align}
 d(\mathbf Y_k-\alpha_k\Sb_k)^2
 &\le(1+\eta)\norm{\mathbf Y_k-\one\otimes\bar X_k}_F^2\notag\\
 &\quad+C_t(\eta)\alpha_k^2
 \left\{\norm{(I-\Jb)\Sb_k}_F^2
 +\sum_i\norm{(I-P_{\bar X_k})S_{i,k}}_F^2\right\}\notag\\
 &\quad+C_t(\eta)n\alpha_k^4\norm{\widehat S_k}_F^4,
 \label{eq:common-tangent-app}
\end{align}
where
\(C_t(\eta)=2(1+\eta^{-1})
\max\{1,C_p^2\}\).
\end{lemma}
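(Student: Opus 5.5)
The idea is to bound \(d(\mathbf Y_k-\alpha_k\Sb_k)\) by the distance from \(\mathbf Y_k-\alpha_k\Sb_k\) to a well-chosen candidate point of \(\Cman\). We do not use \(\one\otimes\bar X_k\) itself, since that would leave an \(O(\alpha_k)\) term from the common tangent part of \(\Sb_k\). Instead we use
\[
\one\otimes Q',\qquad Q':=P_{\St}(\bar X_k-\alpha_k\widehat S_k).
\]
This point is well defined because \(\alpha_k\norm{\widehat S_k}_F\le1/2\). Since \(\bar X_k\in\St\), its singular values are all one, so Lemma~\ref{lem:polar-app} applies at \(X=\bar X_k\) with \(s_-=s_+=1\), \(Q=\bar X_k\), \(H=I\) and \(\Delta=-\alpha_k\widehat S_k\). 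From the derivative formula, \(\mathcal L_{\bar X_k}\) is the identity on \(T_{\bar X_k}\St\): the Sylvester equation gives \(\Theta=Q^\top Z\), and \(H^{-1}=I\). Hence
\[
Q'-\bar X_k=-\alpha_kP_{\bar X_k}\widehat S_k+\mathcal R,\qquad \norm{\mathcal R}_F\le C_p\alpha_k^2\norm{\widehat S_k}_F^2 .
\]

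Next decompose
\[
\mathbf Y_k-\alpha_k\Sb_k-\one\otimes Q'=(\mathbf Y_k-\one\otimes\bar X_k)+\mathbf B,
\]
where
\[
\mathbf B=-\alpha_k\Sb_k+\alpha_k\one\otimes P_{\bar X_k}\widehat S_k-\one\otimes\mathcal R .
\]
Apply \eqref{eq:young-square-app} with parameter \(\eta\). This gives the \((1+\eta)\) term and a factor \((1+\eta^{-1})\norm{\mathbf B}_F^2\).

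To bound \(\mathbf B\), write \(\Sb_k=(I-\Jb)\Sb_k+\one\otimes\widehat S_k\). Then
\[
-\alpha_k\Sb_k+\alpha_k\one\otimes P_{\bar X_k}\widehat S_k=-\alpha_k(I-\Jb)\Sb_k-\alpha_k\one\otimes(I-P_{\bar X_k})\widehat S_k .
\]
The two pieces are orthogonal, since the second is a consensus stack. Jensen and linearity give
\[
n\norm{(I-P_{\bar X_k})\widehat S_k}_F^2\le\sum_i\norm{(I-P_{\bar X_k})S_{i,k}}_F^2 .
\]
The remainder contributes \(\norm{\one\otimes\mathcal R}_F^2\le nC_p^2\alpha_k^4\norm{\widehat S_k}_F^4\). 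Splitting \(\norm{\mathbf B}_F^2\le2(\cdot)+2\norm{\one\otimes\mathcal R}_F^2\) and absorbing the constants into \(\max\{1,C_p^2\}\) yields \(C_t(\eta)=2(1+\eta^{-1})\max\{1,C_p^2\}\). Finally, \(d\) is at most the distance to any point of \(\Cman\), which gives \eqref{eq:common-tangent-app}.

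One subtlety concerns the normal part of \(\widehat S_k\). Because the \(S_{i,k}\) are tangent at \(X_{i,k}\) rather than at \(\bar X_k\), the expansion as written has the full \(\Delta\) in the remainder. The cleaner route is to note that the normal component of \(\Delta\) does not change the polar factor (the normal-fiber property in Lemma~\ref{lem:polar-app}). With that, only \(P_{\bar X_k}\widehat S_k\) enters the linear term, and the normal part is kept explicitly in \(\mathbf B\), as done above.

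The main obstacle is applying Lemma~\ref{lem:polar-app} legitimately. Its hypothesis needs \(\norm\Delta_F\le s_-/2=1/2\), which is exactly the assumed condition \(\alpha_k\norm{\widehat S_k}_F\le1/2\). Its remainder bound involves \(\norm{P_Q\Delta}_F\norm\Delta_F\le\norm\Delta_F^2\), and this is what produces the \(\alpha_k^4\norm{\widehat S_k}_F^4\) term with constant \(C_p\), with \(C_p\) depending only on \(s_\pm=1\).
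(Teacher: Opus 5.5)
Your proof is correct and follows the paper's argument: the same orthogonal split \((I-\Jb)\Sb_k+\one\otimes(I-P_{\bar X_k})\widehat S_k\), Jensen on the normal parts, Young with parameter \(\eta\), and then \(\norm{B+C}_F^2\le2\norm{B}_F^2+2\norm{C}_F^2\). The one difference is the comparison point: the paper uses \(P_{\St}(\bar X_k-\alpha_kP_{\bar X_k}\widehat S_k)\), whose remainder is at most \(C_p\alpha_k^2\norm{P_{\bar X_k}\widehat S_k}_F^2\), while your \(P_{\St}(\bar X_k-\alpha_k\widehat S_k)\) has remainder at most \(C_p\norm{P_{\bar X_k}\Delta}_F\norm{\Delta}_F\le C_p\alpha_k^2\norm{\widehat S_k}_F^2\), so both give the same final bound. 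Your closing ``subtlety'' remark is loose, since a normal perturbation leaves the polar factor unchanged only when applied alone at \(\bar X_k\); combined with the tangent part it shifts the factor at second order, but the product remainder in Lemma~\ref{lem:polar-app} already absorbs this, so your computation is unaffected.
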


\begin{proof}
Use the feasible comparison point
\(P_{\St}(\bar X_k-\alpha_kP_{\bar X_k}\widehat S_k)\).
Lemma~\ref{lem:polar-app}, applied at \(\bar X_k\), has
\(\mathcal L_{\bar X_k}[P_{\bar X_k}\widehat S_k]
=P_{\bar X_k}\widehat S_k\). Hence
\begin{equation}
 \begin{aligned}
 &P_{\St}(\bar X_k-\alpha_kP_{\bar X_k}\widehat S_k)\\
 &\quad=\bar X_k-\alpha_kP_{\bar X_k}\widehat S_k
 +\mathcal R_{\bar X_k}(-\alpha_kP_{\bar X_k}\widehat S_k),\\
 &\norm{\mathcal R_{\bar X_k}(-\alpha_kP_{\bar X_k}\widehat S_k)}_F
 \le C_p\alpha_k^2\norm{P_{\bar X_k}\widehat S_k}_F^2.
 \end{aligned}
 \label{eq:comparison-retraction-app}
\end{equation}
The condition \(\alpha_k\norm{\widehat S_k}_F\le1/2\) guarantees the radius
required in that lemma.  Direct subtraction gives
\begin{align*}
 &\mathbf Y_k-\alpha_k\Sb_k
 -\one\otimes P_{\St}(\bar X_k-\alpha_kP_{\bar X_k}\widehat S_k)\\
 &\quad=\mathbf Y_k-\one\otimes\bar X_k
 -\alpha_k(\Sb_k-\one\otimes P_{\bar X_k}\widehat S_k)\\
 &\qquad-\one\otimes\mathcal R_{\bar X_k}
 (-\alpha_kP_{\bar X_k}\widehat S_k).
\end{align*}
The middle bracket has the exact orthogonal decomposition
\begin{equation}
 \Sb_k-\one\otimes P_{\bar X_k}\widehat S_k
 =(I-\Jb)\Sb_k+\one\otimes(I-P_{\bar X_k})\widehat S_k.
 \label{eq:tangent-stack-split-app}
\end{equation}
Indeed, the first term has zero network average and the second is a consensus
stack.  Jensen's inequality further gives
\begin{equation}
 n\norm{(I-P_{\bar X_k})\widehat S_k}_F^2
 \le\sum_i\norm{(I-P_{\bar X_k})S_{i,k}}_F^2.
 \label{eq:normal-average-jensen-app}
\end{equation}
Apply \eqref{eq:young-square-app} first to the initial error and the sum of
the last two terms, and then use \(\norm{B+C}_F^2\le2\norm B_F^2+2\norm C_F^2\).
Equations \plaineqref{eq:comparison-retraction-app}--
\plaineqref{eq:normal-average-jensen-app}, together with
\(\norm{P_{\bar X_k}\widehat S_k}_F\le\norm{\widehat S_k}_F\), give
\eqref{eq:common-tangent-app}.  Finally, distance to \(\Cman\) is no larger
than the norm to the particular feasible stack
\(\one\otimes P_{\St}(\bar X_k-\alpha_kP_{\bar X_k}\widehat S_k)\),
which completes the proof.
\end{proof}

\begin{proof}[Proof of Lemma~\ref{lem:disagreement}]
Put \(\mathbf Y_k=F(\Xb_k)\) and use the current nearest
representative \(\bar\Xb_k=\one\otimes\bar X_k\). The fixed-representative contraction
\eqref{eq:fixed-representative-app} gives
\begin{equation}
 \norm{\mathbf Y_k-\bar\Xb_k}_F
 \le\rho d(\Xb_k).
 \label{eq:fixed-representative-use-app}
\end{equation}
Moreover, \eqref{eq:normal-signal-app} gives
\begin{equation}
 \sum_i\norm{(I-P_{\bar X_k})S_{i,k}}_F^2
 \le L_P^2G^2d(\Xb_k)^2.
 \label{eq:normal-residual-use-app}
\end{equation}
Temporarily impose \(\alpha_kG\le1/2\); the fiber restriction
\eqref{eq:fiber-cap-app} below is strictly stronger.  Apply
Lemma~\ref{lem:tangent-perturb} with
\(\eta=(1-\rho^2)/(2\rho^2)\). Conditional expectation, followed by equations
\plaineqref{eq:centered-signal-app} and
\plaineqref{eq:average-signal-app}, and the
pathwise inequality
\(\norm{\widehat S_k}_F^4\le G^2\norm{\widehat S_k}_F^2\), yields
\begin{align}
 \E[d(\Xb_{k+1})^2\mid\mathcal F_k]
 &\le\frac{1+\rho^2}{2}d(\Xb_k)^2
 +C_t\alpha_k^2
 \left\{(2L_g^2+L_P^2G^2)d(\Xb_k)^2
 +2n\xi^2+\frac{n\sigma^2}{b}\right\}\notag\\
 &\quad+C_tnG^2\alpha_k^4
 \left\{2\norm{\grad f(\bar X_k)}_F^2
 +\frac{2L_g^2}{n}d(\Xb_k)^2
 +\frac{\sigma^2}{nb}\right\},
 \label{eq:disagreement-preabsorb-app}
\end{align}
where \(C_t=2(1+\rho^2)(1-\rho^2)^{-1}\max\{1,C_p^2\}\). Let
\begin{equation}
 C_d:=C_t
 \{2L_g^2+L_P^2G^2+2G^2L_g^2\}.
 \label{eq:disagreement-constant-app}
\end{equation}
Because \(\alpha_k\le1\), all multiples of \(d(\Xb_k)^2\) after
\((1+\rho^2)/2\) in \eqref{eq:disagreement-preabsorb-app} are at most
\(C_d\alpha_k^2d(\Xb_k)^2\).  Therefore the explicit condition
\begin{equation}
 \alpha_k\le\alpha_d:=\sqrt{\frac{1-\rho^2}{4C_d}}
 \label{eq:disagreement-cap-app}
\end{equation}
gives
\begin{equation}
 \frac{1+\rho^2}{2}+C_d\alpha_k^2
 \le\frac{3+\rho^2}{4}<1.
 \label{eq:disagreement-margin-app}
\end{equation}
The explicit choice
\begin{equation}
 C_1:=2C_t\max\{1,G^2\}
 \label{eq:disagreement-forcing-constant-app}
\end{equation}
dominates every remaining coefficient, independently of
\(k,n,b,\sigma,\xi\), and gives
\begin{align}
 \E[d(\Xb_{k+1})^2\mid\mathcal F_k]
 &\le\frac{3+\rho^2}{4}d(\Xb_k)^2
 +C_1n\alpha_k^2\left(\xi^2+\frac{\sigma^2}{b}\right)\notag\\
 &\quad+C_1n\alpha_k^4
 \left(\norm{\grad f(\bar X_k)}_F^2+\frac{\sigma^2}{nb}\right),
 \label{eq:disagreement-final-app}
\end{align}
which is the claimed stochastic recursion.

It remains to justify that every estimate above is applied inside the tube.
Nearest-point minimality and equations
\plaineqref{eq:fixed-representative-app} and
\plaineqref{eq:pathwise-signal-app} give the pathwise inequality
\begin{align}
 d(\Xb_{k+1})
 &\le d(F(\Xb_k))+\alpha_k\norm{\Sb_k}_F\notag\\
 &\le\rho d(\Xb_k)+\alpha_k\sqrt nG.
 \label{eq:pathwise-tube-recursion-app}
\end{align}
If \(d(\Xb_0)\le R/2\) and
\begin{equation}
 \alpha_0\le\alpha_t:=\frac{(1-\rho)R}{2\sqrt nG},
 \label{eq:tube-cap-app}
\end{equation}
then the induction hypothesis \(d(\Xb_k)\le R\) implies
\[
 d(\Xb_{k+1})
 \le\rho R+\frac{1-\rho}{2}R
 =\frac{1+\rho}{2}R<R.
\]
The initialization starts the induction, proving tube invariance.
\end{proof}

\subsection{Landing average and polar radii}

For the algorithmic value \(\gamma=1/3\), write
\[
 \widehat F_k=-\frac{\gamma}{n}
 \sum_iX_{i,k}(X_{i,k}^\top X_{i,k}-I_r).
\]
Since \(\bar X_k^\top\bar X_k=I_r\),
\[
 X_{i,k}^\top X_{i,k}-I_r=(X_{i,k}-\bar X_k)^\top X_{i,k}
 +\bar X_k^\top(X_{i,k}-\bar X_k).
\]
Thus, on the radius-\(R\) tube,
\begin{equation}
 \norm{X_{i,k}^\top X_{i,k}-I_r}_F
 \le(2+R)\norm{X_{i,k}-\bar X_k}_F,
 \qquad
 \norm{X_{i,k}}_2\le1+R.
 \label{eq:constraint-residual-app}
\end{equation}
Set
\begin{equation}
 L_a:=\gamma(1+R)(2+R),
 \qquad
 C_a:=\gamma^2(2+R)^2.
 \label{eq:landing-constants-app}
\end{equation}
Using \(\norm{\sum_iA_i}_F^2\le n\sum_i\norm{A_i}_F^2\) gives
\begin{equation}
 \norm{\widehat F_k}_F^2
 \le\frac{L_a^2}{n}d(\Xb_k)^2.
 \label{eq:full-landing-app}
\end{equation}
For the tangent component, symmetry of \(X_{i,k}^\top X_{i,k}-I_r\) implies
\(P_{\bar X_k}[\bar X_k(X_{i,k}^\top X_{i,k}-I_r)]=0\), and hence
\[
 P_{\bar X_k}[X_{i,k}(X_{i,k}^\top X_{i,k}-I_r)]
 =P_{\bar X_k}[(X_{i,k}-\bar X_k)(X_{i,k}^\top X_{i,k}-I_r)].
\]
The feasible tangent projection is nonexpansive, so
\begin{align*}
 \norm{P_{\bar X_k}\widehat F_k}_F
 &\le\frac{\gamma(2+R)}{n}
 \sum_i\norm{X_{i,k}-\bar X_k}_F^2.
\end{align*}
After squaring,
\begin{equation}
 \norm{P_{\bar X_k}\widehat F_k}_F^2
 \le\frac{C_a}{n^2}d(\Xb_k)^4
 \le\frac{C_a}{n}d(\Xb_k)^4.
 \label{eq:tangent-landing-app}
\end{equation}

Jensen's inequality gives
\(\norm{\widehat X_k-\bar X_k}_F\le d(\Xb_k)/\sqrt n\).  Write the
right polar decomposition as
\(\widehat X_k=\bar X_kH_k\), where
\(H_k=(\widehat X_k^\top\widehat X_k)^{1/2}\succ0\).  By
\eqref{eq:polar-singular-values-app},
\begin{equation}
 \sigma_j(\widehat X_k)=\lambda_j(H_k)=\sigma_j(H_k),
 \qquad j=1,\ldots,r.
 \label{eq:average-polar-singular-values-app}
\end{equation}
Moreover,
\(\norm{H_k-I_r}_F=\norm{\widehat X_k-\bar X_k}_F\), because
\(\bar X_k^\top\bar X_k=I_r\).  Hence every eigenvalue of \(H_k\) lies in
\([r_-,r_+]\), where
\begin{equation}
 r_-:=1-\frac{R}{\sqrt n},
 \qquad
 r_+:=1+\frac{R}{\sqrt n}.
 \label{eq:polar-radii-app}
\end{equation}
In particular, \(R\le1/8\) gives \(r_-\ge7/8\).  The averaged update is
\begin{equation}
 \widehat X_{k+1}-\widehat X_k
 =\Delta_k:=\widehat F_k-\alpha_k\widehat S_k.
 \label{eq:average-update-app}
\end{equation}
Equations \plaineqref{eq:pathwise-signal-app} and
\plaineqref{eq:full-landing-app}
imply
\[
 \norm{\Delta_k}_F
 \le\frac{L_aR}{\sqrt n}+\alpha_kG.
\]
The numerator in the following quantity is positive:
\begin{equation}
 \alpha_f:=\frac{r_-/2-L_aR/\sqrt n}{G}>0.
 \label{eq:fiber-cap-app}
\end{equation}
Indeed, \(r_-/2\ge7/16\), whereas
\(L_aR/\sqrt n\le(51/64)(1/8)=51/512<7/16\).  Consequently,
\(\alpha_k\le\alpha_f\) ensures
\begin{equation}
 \norm{\Delta_k}_F\le r_-/2,
 \label{eq:fiber-radius-app}
\end{equation}
so Lemma~\ref{lem:polar-app} applies at every iteration.  Notice also that
\(\alpha_fG<1/2\); hence the same cap guarantees the polar radius
used in Lemma~\ref{lem:tangent-perturb}.

\subsection{One-step descent}

\begin{proof}[Proof of Lemma~\ref{lem:descent}]
Lemma~\ref{lem:polar-app} and \eqref{eq:average-update-app} give the exact
identity
\begin{equation}
 \bar X_{k+1}-\bar X_k
 =\mathcal L_{\widehat X_k}[P_{\bar X_k}\Delta_k]+\mathcal R_{\widehat X_k}(\Delta_k).
 \label{eq:polar-update-app}
\end{equation}
Substitution into \eqref{eq:chord-smoothness-app} produces four terms:
\begin{align}
 \E[f(\bar X_{k+1})-f(\bar X_k)]
 &\le\E\ip{\grad f(\bar X_k)}{\mathcal L_{\widehat X_k}[P_{\bar X_k}\widehat F_k]}
 -\alpha_k\E\ip{\grad f(\bar X_k)}{\mathcal L_{\widehat X_k}[P_{\bar X_k}\widehat S_k]}\notag\\
 &\quad+\E\ip{\grad f(\bar X_k)}{\mathcal R_{\widehat X_k}(\Delta_k)}
 +\frac{L_f}{2}\E\norm{\bar X_{k+1}-\bar X_k}_F^2.
 \label{eq:four-descent-terms-app}
\end{align}
We bound these terms in their displayed order, keeping every absorption
coefficient explicit.

First decompose
\(\widehat S_k=\widehat G_k+\widehat{\boldsymbol\Xi}_k\).
All factors in the inner product containing the current noise, except the
noise itself, are \(\mathcal F_k\)-measurable.  Linearity and
\eqref{eq:noise-zero-app} therefore give
\[
 \E[\ip{\grad f(\bar X_k)}{\mathcal L_{\widehat X_k}[P_{\bar X_k}\widehat{\boldsymbol\Xi}_k]}
 \mid\mathcal F_k]=0.
\]
Since \(P_{\bar X_k}\grad f(\bar X_k)=\grad f(\bar X_k)\),
\[
 P_{\bar X_k}\widehat G_k=\grad f(\bar X_k)+P_{\bar X_k}(\widehat G_k-\grad f(\bar X_k)).
\]
Using equations \plaineqref{eq:polar-lower-app},
\plaineqref{eq:polar-upper-app}, and
\plaineqref{eq:average-gradient-gap-app}, then applying Cauchy--Schwarz,
gives
\begin{align*}
 \E\ip{\grad f(\bar X_k)}{\mathcal L_{\widehat X_k}[P_{\bar X_k}\widehat S_k]}
 &\ge\frac1{r_+}\E\norm{\grad f(\bar X_k)}_F^2
 -\frac{L_g}{r_-}
 \sqrt{\E\norm{\grad f(\bar X_k)}_F^2\frac{\E d(\Xb_k)^2}{n}}.
\end{align*}
The scalar inequality
\(ab\le a^2/(4r_+)+r_+b^2\) now yields
\begin{equation}
 -\alpha_k\E\ip{\grad f(\bar X_k)}{\mathcal L_{\widehat X_k}[P_{\bar X_k}\widehat S_k]}
 \le-\frac{3\alpha_k}{4r_+}\E\norm{\grad f(\bar X_k)}_F^2
 +\frac{r_+L_g^2\alpha_k}{r_-^2n}\E d(\Xb_k)^2.
 \label{eq:linear-descent-app}
\end{equation}

For the landing term, \eqref{eq:polar-upper-app}, Cauchy--Schwarz, and the
weaker form of \eqref{eq:tangent-landing-app} give
\begin{equation}
 \E\ip{\grad f(\bar X_k)}{\mathcal L_{\widehat X_k}[P_{\bar X_k}\widehat F_k]}
 \le\frac{\sqrt{C_a}}{r_-\sqrt n}
 \sqrt{\E d(\Xb_k)^4\,\E\norm{\grad f(\bar X_k)}_F^2}.
 \label{eq:linear-landing-app}
\end{equation}

For the polar remainder, Lemma~\ref{lem:polar-app} gives
\[
 \norm{\mathcal R_{\widehat X_k}(\Delta_k)}_F
 \le C_p\norm{P_{\bar X_k}\Delta_k}_F\norm{\Delta_k}_F.
\]
Applying \(\norm{A+B}_F^2\le2\norm A_F^2+2\norm B_F^2\) to both squared
factors and multiplying the results gives
\begin{align}
 \norm{P_{\bar X_k}\Delta_k}_F^2\norm{\Delta_k}_F^2
 \le4\{&\norm{P_{\bar X_k}\widehat F_k}_F^2\norm{\widehat F_k}_F^2
 +\alpha_k^2\norm{P_{\bar X_k}\widehat F_k}_F^2\norm{\widehat S_k}_F^2\notag\\
 &+\alpha_k^2\norm{P_{\bar X_k}\widehat S_k}_F^2\norm{\widehat F_k}_F^2
 +\alpha_k^4\norm{P_{\bar X_k}\widehat S_k}_F^2\norm{\widehat S_k}_F^2\}.
 \label{eq:four-products-app}
\end{align}
The four products in \eqref{eq:four-products-app} are bounded one by one by
equations \plaineqref{eq:full-landing-app},
\plaineqref{eq:tangent-landing-app},
\plaineqref{eq:pathwise-signal-app}, and
\plaineqref{eq:average-signal-app}.  After
taking expectation, the result is
\begin{align}
 \E[\norm{P_{\bar X_k}\Delta_k}_F^2\norm{\Delta_k}_F^2]
 &\le\frac{4C_a(G^2+L_a^2R^2)}{n}\E d(\Xb_k)^4
 +\frac{4G^2L_a^2\alpha_k^2}{n}\E d(\Xb_k)^2\notag\\
 &\quad+8G^2\alpha_k^4\E\norm{\grad f(\bar X_k)}_F^2
 +\frac{8G^2L_g^2\alpha_k^4}{n}\E d(\Xb_k)^2
 +4G^2\alpha_k^4\frac{\sigma^2}{nb}.
 \label{eq:product-bound-app}
\end{align}
For example, the last product in \eqref{eq:four-products-app} is at most
\(4G^2\alpha_k^4\norm{\widehat S_k}_F^2\); applying
\eqref{eq:average-signal-app} produces exactly the last three terms in
\eqref{eq:product-bound-app}.  The other three products give the first two
terms, after using \(d(\Xb_k)\le R\) and \(n\ge1\).

By Cauchy--Schwarz and
\(\sqrt{x_1+\cdots+x_m}\le\sum_j\sqrt{x_j}\), the first and third terms of
\eqref{eq:product-bound-app} contribute, respectively,
\begin{align}
 &\frac{2C_p\sqrt{C_a(G^2+L_a^2R^2)}}{\sqrt n}
 \sqrt{\E d(\Xb_k)^4\,\E\norm{\grad f(\bar X_k)}_F^2},
 \label{eq:remainder-mixed-app}\\
 &2\sqrt2C_pG\alpha_k^2\E\norm{\grad f(\bar X_k)}_F^2.
 \label{eq:remainder-gradient-app}
\end{align}
For the other three square-root products, Young's inequality gives
\begin{align}
 &2C_pGL_a\alpha_k
 \sqrt{\frac{\E\norm{\grad f(\bar X_k)}_F^2\,\E d(\Xb_k)^2}{n}}
 \notag\\
 &\qquad\le\frac{\alpha_k}{16r_+}\E\norm{\grad f(\bar X_k)}_F^2
 +\frac{16r_+C_p^2G^2L_a^2\alpha_k}{n}\E d(\Xb_k)^2,
 \label{eq:descent-young-template-app}\\
 &2\sqrt2C_pGL_g\alpha_k^2
 \sqrt{\frac{\E\norm{\grad f(\bar X_k)}_F^2\,\E d(\Xb_k)^2}{n}}
 \notag\\
 &\qquad\le\frac{\alpha_k}{16r_+}\E\norm{\grad f(\bar X_k)}_F^2
 +\frac{32r_+C_p^2G^2L_g^2\alpha_k^3}{n}\E d(\Xb_k)^2,
 \notag\\
 &2C_pG\alpha_k^2
 \sqrt{\frac{\sigma^2}{nb}\E\norm{\grad f(\bar X_k)}_F^2}
 \notag\\
 &\qquad\le\frac{\alpha_k}{16r_+}\E\norm{\grad f(\bar X_k)}_F^2
 +16r_+C_p^2G^2\alpha_k^3\frac{\sigma^2}{nb}.
 \notag
\end{align}
Since \(\alpha_k\le1\), the two
\(\alpha_k^3\) remainders are bounded by the required
\(\alpha_k\E d(\Xb_k)^2/n\) and
\(\alpha_k^2\sigma^2/(nb)\) terms.  These three applications spend exactly
\(3\alpha_k/(16r_+)\) of the descent coefficient.

It remains to bound the chord term. Equations \plaineqref{eq:polar-update-app},
\plaineqref{eq:polar-upper-app},
\plaineqref{eq:polar-remainder-app}, and
\plaineqref{eq:fiber-radius-app} give
\[
 \norm{\bar X_{k+1}-\bar X_k}_F
 \le \left(r_-^{-1}+\frac{C_pr_-}{2}\right)\norm{P_{\bar X_k}\Delta_k}_F.
\]
Also,
\(\norm{P_{\bar X_k}\Delta_k}_F^2\le
2\norm{P_{\bar X_k}\widehat F_k}_F^2+2\alpha_k^2\norm{P_{\bar X_k}\widehat S_k}_F^2\).
Consequently, by equations \plaineqref{eq:tangent-landing-app} and
\plaineqref{eq:average-signal-app},
\begin{align}
 \frac{L_f}{2}\E\norm{\bar X_{k+1}-\bar X_k}_F^2
 &\le L_f\left(r_-^{-1}+\frac{C_pr_-}{2}\right)^2\left\{
 \frac{C_a}{n}\E d(\Xb_k)^4
 +2\alpha_k^2\E\norm{\grad f(\bar X_k)}_F^2\right.\notag\\
 &\hspace{35mm}\left.
 +\frac{2L_g^2\alpha_k^2}{n}\E d(\Xb_k)^2
 +\alpha_k^2\frac{\sigma^2}{nb}\right\}.
 \label{eq:chord-bound-app}
\end{align}

We now collect the coefficients.  The main descent in
\eqref{eq:linear-descent-app} is \(-12\alpha_k/(16r_+)\). The three
Young inequalities above add
\(3\alpha_k/(16r_+)\).  The only remaining positive multiples of
\(\E\norm{\grad f(\bar X_k)}_F^2\) are \eqref{eq:remainder-gradient-app} and the second
term of \eqref{eq:chord-bound-app}.  Put
\begin{equation}
 C_g:=2\sqrt2C_pG+2L_f\left(r_-^{-1}+\frac{C_pr_-}{2}\right)^2,
 \qquad
 \alpha_g:=\frac1{16r_+C_g}.
 \label{eq:descent-cap-app}
\end{equation}
Then \(\alpha_k\le\alpha_g\) implies
\(C_g\alpha_k^2\le\alpha_k/(16r_+)\), leaving
\(-8\alpha_k/(16r_+)=-\alpha_k/(2r_+)\).  Thus we may take
\begin{equation}
 C_3:=\frac1{2r_+}.
 \label{eq:descent-coercivity-app}
\end{equation}
All other coefficients were displayed above. We may take
\[
 C_2:=\max\left\{
 \begin{aligned}
 &\frac{r_+L_g^2}{r_-^2}
 +16r_+C_p^2G^2(L_a^2+2L_g^2)
 +2L_f\left(r_-^{-1}+\frac{C_pr_-}{2}\right)^2L_g^2,\\
 &16r_+C_p^2G^2+L_f\left(r_-^{-1}+\frac{C_pr_-}{2}\right)^2,\quad
 L_f\left(r_-^{-1}+\frac{C_pr_-}{2}\right)^2C_a,\\
 &\frac{\sqrt{C_a}}{r_-}
 +2C_p\sqrt{C_a(G^2+L_a^2R^2)}
 \end{aligned}
 \right\}.
\]
Equations \plaineqref{eq:four-descent-terms-app}--
\plaineqref{eq:chord-bound-app} now give
\begin{align}
 \E[f(\bar X_{k+1})-f(\bar X_k)]
 &\le-C_3\alpha_k\E\norm{\grad f(\bar X_k)}_F^2
 +\frac{C_2\alpha_k}{n}\E d(\Xb_k)^2
 +\frac{C_2\alpha_k^2\sigma^2}{nb}\notag\\
 &\quad+\frac{C_2}{n}\E d(\Xb_k)^4
 +\frac{C_2}{\sqrt n}
 \sqrt{\E d(\Xb_k)^4\,\E\norm{\grad f(\bar X_k)}_F^2},
 \label{eq:descent-final-app}
\end{align}
which proves Lemma~\ref{lem:descent}.
\end{proof}

The constants \(C_2\) and \(C_3\) are fixed entirely by this one-step
argument: \(C_3\) is the descent coefficient left after the four explicit
absorptions, while \(C_2\) dominates the variance, disagreement,
fourth-order, and mixed remainders.  The stable scalar energy estimate used
in Appendix~\ref{app:accumulation} only accumulates the already established
disagreement recursion.

\section{Error accumulation and rate proofs}
\label{app:rates}
\label{app:accumulation}
\raggedbottom

All constants in this appendix depend only on the compact tube, smoothness
and oracle bounds, and the fixed network spectral gap. They are independent
of \(K,b,\sigma,\xi\). The numbered constants \(C_0,C_1,C_2,C_3\) in the main
statements denote common enlargements of the fixed constants below.

\subsection{Stable-energy weighted disagreement summation}

We use the following scalar energy estimate from
\citet[Lemma~2]{Xu2015Augmented} to accumulate the stable disagreement
recursion, omitting the unused forcing term at the final index.

\begin{lemma}[Stable scalar energy estimate]
\label{lem:stable-energy-app}
Suppose nonnegative sequences satisfy
\(v_{k+1}\le\eta v_k+w_k\), where \(0<\eta<1\).  Then, for every
\(K\ge1\),
\begin{equation}
 \left(\sum_{k=0}^{K}v_k^2\right)^{1/2}
 \le\frac{\sqrt2}{1-\eta}
 \left(\sum_{k=0}^{K-1}w_k^2\right)^{1/2}
 +\sqrt{\frac{2}{1-\eta^2}}\,v_0.
 \label{eq:stable-energy-app}
\end{equation}
\end{lemma}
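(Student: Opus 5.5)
Unrolling the recursion expresses \(v_k\) as a discrete convolution of the forcing sequence with a geometric kernel, plus the decaying initial term. Repeated application of \(v_{k+1}\le\eta v_k+w_k\), using nonnegativity, gives for every \(k\ge0\)
\[
 v_k\le \eta^k v_0+\sum_{j=0}^{k-1}\eta^{k-1-j}w_j .
\]
Write \(a_k:=\eta^kv_0\) and \(b_k:=\sum_{j<k}\eta^{k-1-j}w_j\), with \(b_0=0\). Then \(v_k^2\le(a_k+b_k)^2\le 2a_k^2+2b_k^2\). Summing over \(k=0,\ldots,K\) and taking square roots, with \(\sqrt{x+y}\le\sqrt x+\sqrt y\), gives
\[
 \Bigl(\sum_{k\le K}v_k^2\Bigr)^{1/2}\le \sqrt2\Bigl(\sum_{k\le K}a_k^2\Bigr)^{1/2}+\sqrt2\Bigl(\sum_{k\le K}b_k^2\Bigr)^{1/2}.
\]
This split accounts for the two factors of \(\sqrt2\) in \eqref{eq:stable-energy-app}.

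The initial term is a geometric series: \(\sum_{k\le K}\eta^{2k}v_0^2\le v_0^2/(1-\eta^2)\). This produces exactly \(\sqrt{2/(1-\eta^2)}\,v_0\).

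The convolution term is handled by Young's convolution inequality (\(\ell^1*\ell^2\to\ell^2\)). Extend \(w_j\) by zero for \(j\ge K\). The sequence \((b_1,\ldots,b_K)\) is a truncation of the convolution of the kernel \((\eta^m)_{m\ge0}\) with \((w_j)_{j<K}\). Its \(\ell^2\) norm is therefore at most \(\|\eta^{\cdot}\|_{\ell^1}\,\|w\|_{\ell^2}=(1-\eta)^{-1}(\sum_{j<K}w_j^2)^{1/2}\). Only \(w_0,\ldots,w_{K-1}\) enter \(b_1,\ldots,b_K\), which explains why the final forcing index is omitted.

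If a self-contained argument is preferred over citing Young's inequality, Cauchy--Schwarz gives the same bound: \(b_k^2\le(\sum_j\eta^{k-1-j})(\sum_j\eta^{k-1-j}w_j^2)\le(1-\eta)^{-1}\sum_{j<k}\eta^{k-1-j}w_j^2\). Summing over \(k\) and swapping the order of summation yields another factor \((1-\eta)^{-1}\), so \(\sum_k b_k^2\le(1-\eta)^{-2}\sum_{j<K}w_j^2\).

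Combining the two pieces gives \eqref{eq:stable-energy-app}. There is no real obstacle; the only care needed is the index bookkeeping, namely that \(k\) runs to \(K\) while \(j\) runs to \(K-1\), and that the square root is split only once.
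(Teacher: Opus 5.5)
Your proof is correct and complete. The unrolled bound, the split \((a_k+b_k)^2\le 2a_k^2+2b_k^2\), the geometric sum \(\sum_{k\le K}\eta^{2k}\le(1-\eta^2)^{-1}\), and the Cauchy--Schwarz plus summation-swap estimate \(\sum_{k=1}^{K}b_k^2\le(1-\eta)^{-2}\sum_{j<K}w_j^2\) all check out and reproduce the stated constants exactly.

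The paper gives no proof of this lemma. It imports the result from \citet[Lemma~2]{Xu2015Augmented} and only remarks that the forcing term at the final index is dropped. Your argument, the standard unroll-and-convolve bound, makes the lemma self-contained and actually justifies that modification: \(b_0,\ldots,b_K\) involve only \(w_0,\ldots,w_{K-1}\), so the range \(j<K\) is verified rather than taken on trust.

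One further observation: since \(0\le v_k\le a_k+b_k\) componentwise, Minkowski's inequality in \(\ell^2\) directly gives \(\bigl(\sum_{k\le K}v_k^2\bigr)^{1/2}\le(1-\eta)^{-1}\bigl(\sum_{j<K}w_j^2\bigr)^{1/2}+(1-\eta^2)^{-1/2}v_0\). The two factors of \(\sqrt2\) in the statement therefore come only from the cruder squaring step. The weaker stated form is harmless downstream, where these constants are absorbed into \(C_E\).
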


Let \(\alpha_k\) be either constant or
\(\alpha_0/\sqrt{k+1}\), so that
\(0<\alpha_{k+1}\le\alpha_k\le\alpha_0\le1\).  Set
\[
 D_k:=\E d(\Xb_k)^2,
 \qquad \mathcal G_k:=\E\norm{\grad f(\bar X_k)}_F^2,
 \qquad A:=\xi^2+\frac{\sigma^2}{b}.
\]
Taking total expectation in \eqref{eq:disagreement-final-app}, multiplying
by \(\alpha_{k+1}\), taking square roots, and using
\(\sqrt{x_1+\cdots+x_m}\le\sum_j\sqrt{x_j}\) gives, for
\(v_k:=\sqrt{\alpha_kD_k}\),
\begin{align}
 v_{k+1}
 &\le\frac{\sqrt{3+\rho^2}}2\,v_k
 +\sqrt{C_1nA}\,\alpha_k^{3/2}
 +\sqrt{C_1n}\,\alpha_k^{5/2}\sqrt{\mathcal G_k}
 +\sqrt{\frac{C_1\sigma^2}{b}}\,\alpha_k^{5/2}.
 \label{eq:weighted-energy-recursion-app}
\end{align}
For example,
\(\sqrt{\alpha_{k+1}\alpha_k^2}\le\alpha_k^{3/2}\) and
\(\sqrt{\alpha_{k+1}\alpha_k^4}\le\alpha_k^{5/2}\).
Only these two index-shift inequalities~are~needed.

Apply Lemma~\ref{lem:stable-energy-app} with
\(\eta=\sqrt{3+\rho^2}/2\), and use the
triangle inequality for the three forcing sequences in
\eqref{eq:weighted-energy-recursion-app}.  Squaring the resulting four-term
bound with \((a_1+\cdots+a_4)^2\le4\sum_j a_j^2\) yields
\begin{align}
 \sum_{k=0}^{K}\alpha_kD_k
 \le C_E\!\left\{\alpha_0D_0
 +C_1nA\sum_{k<K}\alpha_k^3
 +C_1n\sum_{k<K}\alpha_k^5\mathcal G_k
 +\frac{C_1\sigma^2}{b}\sum_{k<K}\alpha_k^5\right\},
 \label{eq:weighted-disagreement-sum-app}
\end{align}
where the explicit fixed choice
\[
 C_E:=4\max\left\{
 \frac{2}{(1-\sqrt{3+\rho^2}/2)^2},\frac{8}{1-\rho^2}\right\}
\]
is independent of \(K,b,\sigma,\xi\) (and is uniform over a network family
whenever \(\rho\) is bounded uniformly below one). Finally,
\begin{align*}
 \alpha_k^5\mathcal G_k&\le\alpha_0^4\alpha_k\mathcal G_k,\\
 \frac{\sigma^2}{b}\alpha_k^5
 &\le n\left(\xi^2+\frac{\sigma^2}{b}\right)\alpha_k^3,
\end{align*}
because \(\alpha_k\le1\) and \(n\ge1\).  The prescribed initialization is
deterministic, so \(D_0=d(\Xb_0)^2\); for a random initialization the same
proof simply replaces this quantity by its expectation.  Dropping the
nonnegative \(k=K\) term on the left and taking
\(C_0\ge C_E\max\{1,C_1\}\) proves
\begin{equation}
 \sum_{k<K}\alpha_k\E d(\Xb_k)^2
 \le C_0\!\left\{
 \alpha_0d(\Xb_0)^2
 +n\left(\xi^2+\frac{\sigma^2}{b}\right)\sum_{k<K}\alpha_k^3
 +n\alpha_0^4\sum_{k<K}\alpha_k
       \E\norm{\grad f(\bar X_k)}_F^2\right\}.
 \label{eq:disagreement-accumulation-app}
\end{equation}
Conditional independence gives the
one-step \(1/(nb)\) variance bound, and the energy argument retains the powers of
\(n\), \(b\), and \(\alpha_k\) required by the master bound.  The final
\(\alpha_k^5\) noise term is deliberately upper-bounded by the displayed
\(nA\alpha_k^3\) network term.

\subsection{Higher-order accumulation}

\begin{lemma}[Higher-order accumulation]
\label{lem:higher-order-accumulation-app}
Suppose tube invariance holds and the stepsize is either constant or
\(\alpha_k=\alpha_0/\sqrt{k+1}\), with \(0<\alpha_0\le1\). There is a
fixed \(C_h>0\) such that, for every \(\varepsilon>0\) and \(K\ge1\),
\begin{align}
 &\frac{C_2}{n}\sum_{k<K}\E d(\Xb_k)^4
 +\frac{C_2}{\sqrt n}\sum_{k<K}
 \sqrt{\E d(\Xb_k)^4\,
       \E\norm{\grad f(\bar X_k)}_F^2}\notag\\
 &\quad\le C_h\left\{\frac{d(\Xb_0)^2}{n}
                    +\frac{d(\Xb_0)}{\sqrt n}\right\}
 +\varepsilon\sum_{k<K}\alpha_k
       \E\norm{\grad f(\bar X_k)}_F^2\notag\\
 &\qquad\quad+C_h\left(\alpha_0+\frac1\varepsilon\right)
       \sum_{k<K}\alpha_k\E d(\Xb_k)^2.
 \label{eq:higher-order-accumulation-app}
\end{align}
\end{lemma}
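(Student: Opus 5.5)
The plan is to turn both higher-order sums into sums of \(\alpha_k\E d(\Xb_k)^2\), up to an initialization transient. The tool is a fourth-moment recursion built from the pathwise tube recursion \eqref{eq:pathwise-tube-recursion-app} together with the conditional second-moment recursion \eqref{eq:disagreement-final-app}. The mixed square-root term is then split by Young's inequality with weight \(\varepsilon\alpha_k\).

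\emph{Step 1 (fourth moment).} Write \(d_k=d(\Xb_k)\). Pathwise, \(d_{k+1}\le\rho d_k+\alpha_k\sqrt nG\). The tube cap \eqref{eq:tube-cap-app} gives \(\alpha_k\sqrt nG\le\alpha_0\sqrt nG\le(1-\rho)R/2\). I will use the factorization \(d_{k+1}^4=d_{k+1}^2\cdot d_{k+1}^2\) and bound one factor pathwise:
\[
d_{k+1}^2\le(1+\eta)\rho^2d_k^2+(1+\eta^{-1})n\alpha_k^2G^2 .
\]
The key observation is that \(n\alpha_k^2G^2\le\alpha_k\cdot\alpha_0nG^2\le\alpha_k\sqrt n\,G(1-\rho)R/2\). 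Hence the forcing part contributes at most \(c\sqrt n\,\alpha_k d_{k+1}^2\). After dividing by \(n\), this is at most \(c\,\alpha_k d_{k+1}^2\), which is of the admissible form \(\alpha_k\E d(\Xb_k)^2\) after the index shift \(\alpha_k\le\alpha_{k-1}\).

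For the contracting part \((1+\eta)\rho^2d_k^2\,d_{k+1}^2\), I will not use the pathwise bound a second time. Instead I condition on \(\mathcal F_k\) and apply \eqref{eq:disagreement-final-app}, which gives \(\theta d_k^4\) with \(\theta<1\) plus forcing terms \(d_k^2\cdot n\alpha_k^2(\cdots)\). In those forcing terms, the factor \(n\alpha_k^2\) is again bounded by \(c\sqrt n\,\alpha_k\). This reduces the fourth moment to a stable scalar recursion
\[
\E d_{k+1}^4\le\theta\,\E d_k^4+c\sqrt n\,\alpha_k(D_k+D_{k+1}).
\]
Summing this recursion (or applying Lemma~\ref{lem:stable-energy-app}) yields
\[
\tfrac1n\textstyle\sum_k\E d_k^4\le C\bigl(d_0^4/n+\sum_k\alpha_kD_k\bigr).
\]
Finally, \(d_0^4\le R^2d_0^2\) produces the \(d(\Xb_0)^2/n\) term. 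The fourth-order gradient forcing \(\alpha_k^4\mathcal G_k\) inside \eqref{eq:disagreement-final-app} is bounded using \eqref{eq:global-gradient-bound-app}, so that \(\mathcal G_k\le G_f^2\), and absorbed into the same tube bound.

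\emph{Step 2 (mixed term).} For each \(k\) I apply
\[
\tfrac{C_2}{\sqrt n}\sqrt{\E d_k^4\,\mathcal G_k}\le\varepsilon\alpha_k\mathcal G_k+\tfrac{C_2^2}{4\varepsilon n\alpha_k}\E d_k^4 .
\]
It then remains to show \(\sum_k\E d_k^4/(n\alpha_k)\lesssim\sum_k\alpha_kD_k\) plus a transient. For this I rerun Step 1 with the weight \(1/\alpha_{k+1}\). The crucial point is that the forcing is \(n\alpha_k^2\). Dividing by \(\alpha_{k+1}\) and using \(\alpha_k/\alpha_{k+1}\le\sqrt2\) for both schedules leaves \(c\sqrt n\,D\), which is still not weighted by \(\alpha\). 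To recover the \(\alpha\) weight, I will instead use \(d_{k+1}^2\) times the forcing \(n\alpha_k^2\) twice, which gives \(n\alpha_k^2\le(\alpha_k\sqrt n)^2\). The contraction term is weighted by \(\theta\alpha_k/\alpha_{k+1}\). I must then check that this factor stays below \(1\), for instance by choosing \(\theta\) small and shifting indices. The initialization contribution of this weighted recursion is \(d_0^4/(n\alpha_0)\). I plan to control it through the pathwise geometric decay of \(d_k\) during the transient and the cap \(\alpha_0\le(1-\rho)R/(2\sqrt nG)\), aiming at the \(d(\Xb_0)/\sqrt n\) transient. The extra \(\alpha_0\) in front of \(\sum\alpha_kD_k\) should come from the forcing factor \(n\alpha_k^2\le\alpha_0\cdot n\alpha_k\).

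\emph{Expected obstacle.} The delicate step is Step 2: keeping the \(1/\alpha_k\) weight from destroying the contraction, and getting exactly the transient \(d(\Xb_0)/\sqrt n\) rather than \(d(\Xb_0)^4/(n\alpha_0)\). My first attempt will be to write \(d_k^4\le d_k^2\cdot(\rho^kd_0+c\sqrt n\,\alpha_0)^2\) pathwise. The part \(c\,n\alpha_0^2d_k^2\) is then bounded by \(c\sqrt n\,\alpha_0\alpha_k\cdot d_k^2/\alpha_k\cdot\alpha_k\) via the cap. The part \(\rho^{2k}d_0^2d_k^2\), summed geometrically, gives a constant times \(d_0\) after taking square roots in the mixed term, before the Young split. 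This square-root-first treatment of the transient is what I expect to produce the linear \(d(\Xb_0)/\sqrt n\) term.
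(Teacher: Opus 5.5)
There is a genuine gap, and it sits in Step~2 for the schedule \(\alpha_k=\alpha_0/\sqrt{k+1}\). The missing ingredient is a pathwise bound on the disagreement in terms of the \emph{current} stepsize. Unrolling \(d_{k+1}\le\rho d_k+\alpha_k\sqrt nG\) gives \(d_k\le\rho^kd_0+\sqrt nG\sum_{\ell<k}\rho^{k-1-\ell}\alpha_\ell\). Since \(\alpha_{k-1-j}/\alpha_k=\sqrt{(k+1)/(k-j)}\le\sqrt{j+2}\), this becomes \(d_k\le\rho^kd_0+\sqrt nG\,S_\rho\,\alpha_k\), with \(S_\rho:=\sum_{j\ge0}\rho^j\sqrt{j+2}<\infty\).

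Your fallback uses \(\rho^kd_0+c\sqrt n\,\alpha_0\) instead. After the square-root-first split and Young's inequality with weight \(\varepsilon\alpha_k\), the non-transient part leaves \(\varepsilon^{-1}C\alpha_0^2D_k/\alpha_k=\varepsilon^{-1}C(k+1)\alpha_kD_k\). No fixed multiple of \(\alpha_kD_k\) controls this. Your step \(c\,n\alpha_0^2d_k^2\lesssim\sqrt n\,\alpha_0\alpha_kd_k^2\) ``via the cap'' would need \(\sqrt n\,\alpha_0\lesssim\alpha_k\), which is false.

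Your other route applies Young to the whole \(\sqrt{\E d_k^4\,\mathcal G_k}\) and then runs a \(1/\alpha_k\)-weighted fourth-moment recursion. Even if its steady part is made to close, it leaves an initialization contribution of order \(\varepsilon^{-1}d_0^4/(n\alpha_0)\). That is not of the form \(C_h\{d_0^2/n+d_0/\sqrt n\}\) with \(C_h\) independent of \(\varepsilon\) and \(\alpha_0\). Separating the transient pathwise inside that route brings you back to needing the current-stepsize bound. For the constant schedule, \(\alpha_0=\alpha_k\) and your square-root-first plan does go through.

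With the current-stepsize bound, no fourth-moment recursion is needed; this is the paper's argument. Multiply its square by \(d_k^2\) and use \(d_k\le R\) on the transient piece to get \(\E d_k^4\le2R^2\rho^{2k}d_0^2+2nG^2S_\rho^2\alpha_k^2D_k\). Summing with \(\alpha_k^2\le\alpha_0\alpha_k\) gives the first term, including its factor \(\alpha_0\). For the mixed term, take square roots first and bound the transient piece using \(\mathcal G_k\le G_f^2\); this produces the \(\varepsilon\)-free \(d_0/\sqrt n\) term. Then apply Young only to \(\sqrt2\,C_2GS_\rho\alpha_k\sqrt{D_k\mathcal G_k}\).

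Two smaller points in your Step~1 also need repair. First, invoking \eqref{eq:disagreement-final-app} brings \(\xi^2+\sigma^2/b\) into the forcing, so your constant would depend on \(\xi,\sigma,b\); \(C_h\) must not. Using the pathwise bound also on the factor \(d_{k+1}^2\) in \(d_k^2d_{k+1}^2\) avoids this and makes the conditioning unnecessary. Second, spending the tube cap to replace \(\sqrt n\,\alpha_0G\) by \((1-\rho)R/2\) discards the factor \(\alpha_0\) that the statement carries in front of \(\sum_k\alpha_kD_k\).
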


\begin{proof}
Iterating the pathwise recursion \eqref{eq:pathwise-tube-recursion-app} gives
\begin{equation}
 d(\Xb_k)
 \le\rho^kd(\Xb_0)
 +\sqrt nG\sum_{\ell=0}^{k-1}\rho^{k-1-\ell}\alpha_\ell.
 \label{eq:pathwise-unrolled-app}
\end{equation}
For a constant schedule, the convolution in
\eqref{eq:pathwise-unrolled-app} is at most
\(\alpha_k/(1-\rho)\). For
\(\alpha_k=\alpha_0/\sqrt{k+1}\), set \(j=k-1-\ell\). For
\(0\le j\le k-1\),
\[
 \frac{\alpha_{k-1-j}}{\alpha_k}
 =\sqrt{\frac{k+1}{k-j}}
 \le\sqrt{j+2},
\]
where the squared inequality follows from
\((j+2)(k-j)-(k+1)=(j+1)(k-j-1)\ge0\). Therefore both schedules obey
\begin{equation}
 d(\Xb_k)
 \le\rho^kd(\Xb_0)
 +\sqrt nG\alpha_k
   \sum_{j=0}^{\infty}\rho^j\sqrt{j+2}.
 \label{eq:pathwise-schedule-app}
\end{equation}
The series in \eqref{eq:pathwise-schedule-app} is finite by the ratio test.
Its lower bound \(1/(1-\rho)\) also \mbox{covers constant stepsizes}.

Squaring \eqref{eq:pathwise-schedule-app}, multiplying by
\(d(\Xb_k)^2\), and then using the pathwise tube bound
\(d(\Xb_k)\le R\) gives
\begin{align}
 \E d(\Xb_k)^4
 &\le 2R^2\rho^{2k}d(\Xb_0)^2\notag\\
 &\quad+2nG^2\alpha_k^2
 \left(\sum_{j=0}^{\infty}\rho^j\sqrt{j+2}\right)^2
 \E d(\Xb_k)^2.
 \label{eq:fourth-moment-reduction-app}
\end{align}
Consequently, geometric summation and
\(\alpha_k^2\le\alpha_0\alpha_k\) imply
\begin{align}
 \frac{C_2}{n}\sum_{k<K}\E d(\Xb_k)^4
 &\le \frac{2C_2R^2}{1-\rho^2}\frac{d(\Xb_0)^2}{n}\notag\\
 &\quad+2C_2G^2
 \left(\sum_{j=0}^{\infty}\rho^j\sqrt{j+2}\right)^2
 \alpha_0\sum_{k<K}\alpha_k\E d(\Xb_k)^2.
 \label{eq:fourth-sum-app}
\end{align}

Next, take the square root of \eqref{eq:fourth-moment-reduction-app}, use
\(\sqrt{x+y}\le\sqrt x+\sqrt y\) and
\(\E\norm{\grad f(\bar X_k)}_F^2\le G_f^2\), and multiply by
\(C_2/\sqrt n\). This yields, term by term,
\begin{align*}
 &\frac{C_2}{\sqrt n}
 \sqrt{\E d(\Xb_k)^4\,
       \E\norm{\grad f(\bar X_k)}_F^2}\\
 &\quad\le
 \frac{\sqrt2C_2RG_f}{\sqrt n}\rho^kd(\Xb_0)
 +\sqrt2C_2G
 \left(\sum_{j=0}^{\infty}\rho^j\sqrt{j+2}\right)
 \alpha_k
 \sqrt{\E d(\Xb_k)^2\,
       \E\norm{\grad f(\bar X_k)}_F^2}\\
 &\quad\le
 \frac{\sqrt2C_2RG_f}{\sqrt n}\rho^kd(\Xb_0)
 +\varepsilon\alpha_k\E\norm{\grad f(\bar X_k)}_F^2\\
 &\qquad+
 \frac{C_2^2G^2}{2\varepsilon}
 \left(\sum_{j=0}^{\infty}\rho^j\sqrt{j+2}\right)^2
 \alpha_k\E d(\Xb_k)^2,
\end{align*}
where the last line is Young's inequality. Summing this display and using
\(\sum_{k<K}\rho^k\le(1-\rho)^{-1}\), then combining it with
\eqref{eq:fourth-sum-app}, proves
\eqref{eq:higher-order-accumulation-app} after choosing \(C_h\) no smaller
than the finitely many displayed fixed coefficients.
\end{proof}

\subsection{One unified stepsize cap and the master bound}

Sum \eqref{eq:descent-final-app} from \(k=0\) to \(K-1\), use
\(f(\bar X_K)\ge\inf_{X\in\St}f(X)\), and apply
Lemma~\ref{lem:higher-order-accumulation-app} with
\(\varepsilon=C_3/4\). Moving this part of the gradient sum to the left
gives directly
\begin{align}
 \frac{3C_3}{4}\sum_{k<K}\alpha_k
       \E\norm{\grad f(\bar X_k)}_F^2
 &\le f(\bar X_0)-\inf_{X\in\St}f(X)
 +C_h\left\{\frac{d(\Xb_0)^2}{n}
             +\frac{d(\Xb_0)}{\sqrt n}\right\}\notag\\
 &\quad+\frac{C_2\sigma^2}{nb}\sum_{k<K}\alpha_k^2
 +\left\{C_2+C_h\left(\alpha_0+\frac4{C_3}\right)\right\}
       \sum_{k<K}\alpha_k\E d(\Xb_k)^2.
 \label{eq:master-preaccumulation-app}
\end{align}
Here \(n^{-1}\le1\) was used for the disagreement term in
\eqref{eq:descent-final-app}. Inserting
\eqref{eq:disagreement-accumulation-app} into
\eqref{eq:master-preaccumulation-app}, and using \(\alpha_0\le1\), shows
that the only term returned to the gradient sum is at most
\[
 C_0n\left\{C_2+C_h\left(1+\frac4{C_3}\right)\right\}
 \alpha_0^4\sum_{k<K}\alpha_k
 \E\norm{\grad f(\bar X_k)}_F^2.
\]
Thus every local restriction and the final feedback absorption are enforced
by the single sufficient cap
\begin{equation}
 \bar\alpha:=\min\left\{
 \begin{aligned}
 &1,\ \frac{(1-\rho)R}{2\sqrt nG},\
       \frac{r_-/2-L_aR/\sqrt n}{G},\\[-1mm]
 &\sqrt{\frac{1-\rho^2}{4C_d}},\ \frac1{16r_+C_g},\
 \left[\frac{C_3}
 {4C_0n\{C_2+C_h(1+4/C_3)\}}\right]^{1/4}
 \end{aligned}
 \right\}.
 \label{eq:unified-cap-app}
\end{equation}
All entries are positive. In order, they enforce normalization, tube
invariance, the polar-fiber radii, the disagreement margin, the linear
descent absorption, and the final error-feedback absorption.

If \(\alpha_0\le\bar\alpha\), the last entry of
\eqref{eq:unified-cap-app} makes the returned coefficient at most
\(C_3/4\). Moving it to the left of
\eqref{eq:master-preaccumulation-app} leaves at least \(C_3/2\). After
enlarging the numbered constants, we obtain
\begin{align}
 \frac{C_3}{2}\sum_{k<K}\alpha_k
       \E\norm{\grad f(\bar X_k)}_F^2
 &\le C_0\left\{f(\bar X_0)-\inf_{X\in\St}f(X)
                   +d(\Xb_0)^2+\frac{d(\Xb_0)}{\sqrt n}\right\}\notag\\
 &\quad+\frac{C_1\sigma^2}{nb}\sum_{k<K}\alpha_k^2
 +C_2n\left(\xi^2+\frac{\sigma^2}{b}\right)
       \sum_{k<K}\alpha_k^3.
 \label{eq:master-sum-app}
\end{align}
Dividing by \((C_3/2)\sum_{k<K}\alpha_k\), and using the definitions
in the main text, proves Theorem~\ref{thm:master}.

\subsection{Corollaries and the explicit network transient}

For \(\alpha_k=\alpha_0/\sqrt{k+1}\), elementary integral comparisons give
\[
 \sum_{k<K}\alpha_k\ge\alpha_0\sqrt K,
 \qquad
 \sum_{k<K}\alpha_k^2\le\alpha_0^2(1+\log K),
 \qquad
 \sum_{k<K}\alpha_k^3\le3\alpha_0^3.
\]
Substitution into \eqref{eq:master} proves
Corollary~\ref{cor:diminishing}; the minimum of nonnegative values is no
larger than their positive weighted average.

For a constant step \(\alpha\le\bar\alpha\), \eqref{eq:master} gives
\begin{equation}
 \frac1K\sum_{k<K}\E\norm{\grad f(\bar X_k)}_F^2
 \le\frac{C_0\mathcal E_0}{\alpha K}
 +\frac{C_1\alpha\sigma^2}{nb}
 +C_2n\alpha^2\left(\xi^2+\frac{\sigma^2}{b}\right).
 \label{eq:constant-step}
\end{equation}
For the horizon-dependent choice \eqref{eq:horizon-step}, the definition of
the minimum and \(\max\{x,y\}\le x+y\) give, for every \(K\ge1\),
\begin{align}
 \frac1{\alpha_KK}
 &\le\left(\frac1{\bar\alpha}+\frac1c\right)\frac1K
 +\frac{\sigma}{c\sqrt{nbK}}
 +\frac{(n\xi^2)^{1/3}}{cK^{2/3}},
 \label{eq:balanced-initial-app}\\
 \frac{\alpha_K\sigma^2}{nb}
 &\le\frac{c\sigma^2}
 {nb\{1+\sigma\sqrt{K/(nb)}\}}
 \le\frac{c\sigma}{\sqrt{nbK}},
 \label{eq:balanced-average-noise-app}\\
 n\alpha_K^2\xi^2
 &\le\frac{c^2n\xi^2}
 {\{1+(n\xi^2K)^{1/3}\}^2}
 \le\frac{c^2(n\xi^2)^{1/3}}{K^{2/3}},
 \label{eq:balanced-heterogeneity-app}\\
 \frac{n\alpha_K^2\sigma^2}{b}
 &\le\frac{c^2n\sigma^2}
 {b\{1+\sigma\sqrt{K/(nb)}\}^2}
 \le\frac{c^2n^2}{K}.
 \label{eq:network-noise-transient-app}
\end{align}
The second inequality uses \(z^2/(1+z)\le z\), the third uses
\(z^3/(1+z)^2\le z\), and the fourth uses
\(z^2/(1+z)^2\le1\), all for \(z\ge0\).  Under the fixed-network or
uniformly gapped-family conditions of Corollary~\ref{cor:speedup}, the
explicit cap \eqref{eq:unified-cap-app} gives
\(\bar\alpha^{-1}=O(\sqrt n)\).  Substitution into
\eqref{eq:constant-step}, together with \(\sqrt n/K\le n^2/K\), therefore
proves
the always-valid bound
\begin{equation}
 \frac1K\sum_{k<K}\E\norm{\grad f(\bar X_k)}_F^2
 =O\!\left(
 \frac{\sigma}{\sqrt{nbK}}
 +\frac{(n\xi^2)^{1/3}}{K^{2/3}}
 +\frac{n^2}{K}\right).
 \label{eq:speed-rate-with-transient-app}
\end{equation}
For fixed problem and network constants, the last term is no larger than the
sampling term when \(\sigma>0\) and \(K\ge n^5b/\sigma^2\), or no larger
than the heterogeneity term when \(\xi>0\) and \(K\ge n^5/\xi^2\).
Whenever either condition holds, the last term in
\eqref{eq:speed-rate-with-transient-app} is absorbed by one of the first two,
which gives the post-transient two-term form discussed after
Corollary~\ref{cor:speedup}.

\section{Additional Experimental Details and Results}
\label{app:experiments}

\subsection{Supplementary experimental settings}
\label{app:experiment-details}
\label{app:robustness}
\label{app:lrmc}

All nodes start from the same polar projection of a Gaussian matrix.
PCA gradients use the empirical covariance, and subspace distances are
computed by orthogonal Procrustes alignment \citep{Chen2021DecentralizedRG}.
The reference in Figure~\ref{fig:robustness} is the population principal
subspace; Figures~\ref{fig:scalability} and~\ref{fig:heterogeneity} use the
full empirical principal subspace. Synthetic covariance eigenvalues are
\(0.8^j\), \(j=0,\ldots,p-1\), in a random orthogonal basis.
For the network comparison, ER graphs threshold shared edge uniforms,
giving nested edge sets; graphs are generated independently of the data.

Figure~\ref{fig:robustness} and LRMC use fresh uniform minibatches,
independent across iterations and nodes, without replacement within each
batch. The heterogeneity experiment uses exact local gradients.

The LRMC scalability data distributions and dimensions are specified in
Section~\ref{subsec:lrmc}. Each stochastic gradient solves the sampled
columnwise least-squares problems. A column contributes
\((X_{\Omega_j}v_j(X)-y_{j,\Omega_j})v_j(X)^\top\) on observed rows and zero
elsewhere, averaged over the sampled columns. The reduced loss is smooth
where the observed submatrices \(X_{\Omega_j}\) have full column rank
\citep{Golub1973VariableProjection}.

Stepsize legends show coefficients before scaling. Figure~\ref{fig:robustness}
uses \(\alpha_k=\alpha_0/\sqrt{k+1}\), with no additional scaling.
Synthetic PCA, MNIST PCA, and LRMC scalability multiply \(\widehat\alpha_n\)
by \(\sqrt{n/300}\), \(\sqrt{n/300}/10000\), and \(\sqrt{n/800}\), respectively.
The LRMC coefficients \((0.2,0.3,0.4)\) give actual stepsizes
\((0.02,0.03\sqrt2,0.08)\). Heterogeneity uses the unscaled constant
\(7\times10^{-4}\). In Figure~\ref{fig:robustness}, DRSGD uses 1
communication round, consensus stepsize 1, and the polar retraction
\citep{Chen2021DecentralizedRG}. Both methods share data, initialization,
minibatch streams, and stepsizes; their execution order alternates.

Figure~\ref{fig:robustness} simulates all nodes in 1 Python process on the
server. Figures~\ref{fig:scalability} and~\ref{fig:lrmc} use MPI neighbor
communication on the same server, with 1 physical core per node and
single-threaded BLAS. Algorithm time includes sampling, gradient
evaluation, communication, and updates, and excludes initialization and
diagnostics. Figure~\ref{fig:robustness} and LRMC scalability use 20 untimed warm-up
updates on a separate state.

Figure~\ref{fig:robustness} records diagnostics every 50 updates. LRMC scalability
records the initial state, the first update, and approximately every 5
epochs thereafter. Both figures pair median cumulative time with median
error at the same recorded iteration, without interpolation. PCA
scalability uses pointwise medians on common epoch or time grids.

\subsection{Additional results on stochastic matrix completion}
\label{app:lrmc-consensus}

We compare \plados and DRSGD \citep{Chen2021DecentralizedRG} on the LRMC
problem in \eqref{eq:lrmc-objective}, with \(p=100\), \(m=1000\),
\(r=5\), and \(n=8\). Data follow the construction in
Section~\ref{subsec:lrmc}. Each entry is observed independently with
probability \(10r(p+m-r)/(pm)=0.5475\), so the expected number of
observations is 10 times the degrees of freedom of a rank-\(r\) matrix.
Each node holds 125 columns and draws fresh batches of 12 columns on a ring.
Both methods perform 3000 updates using
\(\alpha_k=n\alpha_0/\sqrt{k+1}\), with
\(\alpha_0\in\{0.04,0.06,0.08\}\); legends show \(\alpha_0\) before scaling.
DRSGD uses 1 communication round, consensus stepsize 1, and the polar
retraction. The methods share data, initialization, and minibatch streams.
All nodes are simulated in 1 process on the same local machine, with
single-threaded BLAS and the timing exclusions in Appendix~\ref{app:experiment-details}.

Figure~\ref{fig:lrmc-consensus} reports the Riemannian gradient norm and
the consensus error \(\|\Xb_k-\bar\Xb_k\|_F\), where
\(\bar\Xb_k=\one\otimes\bar X_k\). Each point pairs median cumulative
wall-clock time with median error at the same iteration over 3 runs.
Across the 3 stepsizes, \plados rapidly reduces consensus error and
maintains low disagreement as the steps diminish. Compared with DRSGD,
\plados shows a slight advantage, while both methods
reach comparable final Riemannian gradient norms. These results show stable consensus behavior for the tested
stepsizes without retractions.

\begin{figure}[htbp]
 \centering
 \includegraphics[width=0.90\linewidth]{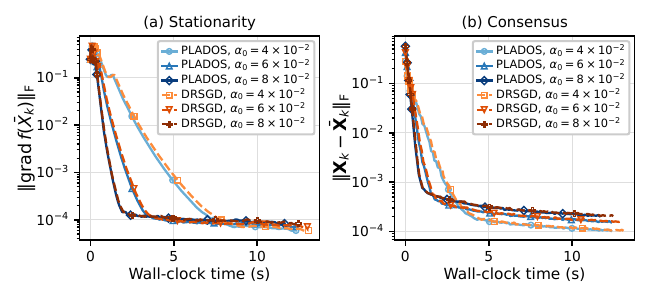}
 \caption{Stochastic matrix completion: \plados and DRSGD at 3 matched
 diminishing stepsizes. Curves show medians over 3 runs.}
 \label{fig:lrmc-consensus}
\end{figure}

\subsection{Additional scalability results}
\label{app:scalability}

Each task uses the same global dataset and initialization across node
counts within a trial. MNIST is divided by 255 without centering or
whitening, and equal shards follow a random permutation. PCA runs complete
300 epochs with batch size 1, giving the same total budget of 18 million
sample evaluations across node counts. LRMC runs complete 800 epochs
with batch size 12, giving 9.6 million sample evaluations and
\(K_n=800\cdot12000/(12n)\) updates per node. The observation mask is
also fixed across node counts.

PCA stepsizes were fixed before testing. For LRMC, the coefficients
\((0.2,0.3,0.4)\) were chosen after an initial comparison with coefficient
\(0.2\) at all node counts, so the actual step approximately doubles when
the node count doubles. The 8-node runs are reused, while the 16- and
32-node runs were performed in a later session on the same server.

For repetition \(s\), let \(t_{n,s}(k)\) denote the recorded cumulative
algorithm time and let \(\mathcal K_{n,s}\) be the saved iteration indices.
For PCA, we define the first recorded time to the common accuracy threshold by
\[
 T_{n,s}:=\min\bigl\{t_{n,s}(k): k\in\mathcal K_{n,s},\;
 d_s(\bar X_k,X^\star)\le0.01\bigr\}.
\]
For LRMC, the criterion is \(\|\grad f(\bar X_k)\|_F\le10^{-3}\).
All runs reach the corresponding threshold. The ratio \(T_{8,s}/T_{n,s}\) is computed
within each trial before taking the median and interquartile range (IQR).
There is no interpolation of hitting times; diagnostics are recorded every
epoch for PCA and approximately every 5 epochs for LRMC.
Figure~\ref{fig:paired-speedup} shows nearly linear practical scaling for
MNIST PCA and LRMC, and substantial acceleration with greater variability
for synthetic PCA. The comparisons use the stated stepsize configurations.

\begin{figure}[htbp]
 \centering
 \includegraphics[width=\linewidth]{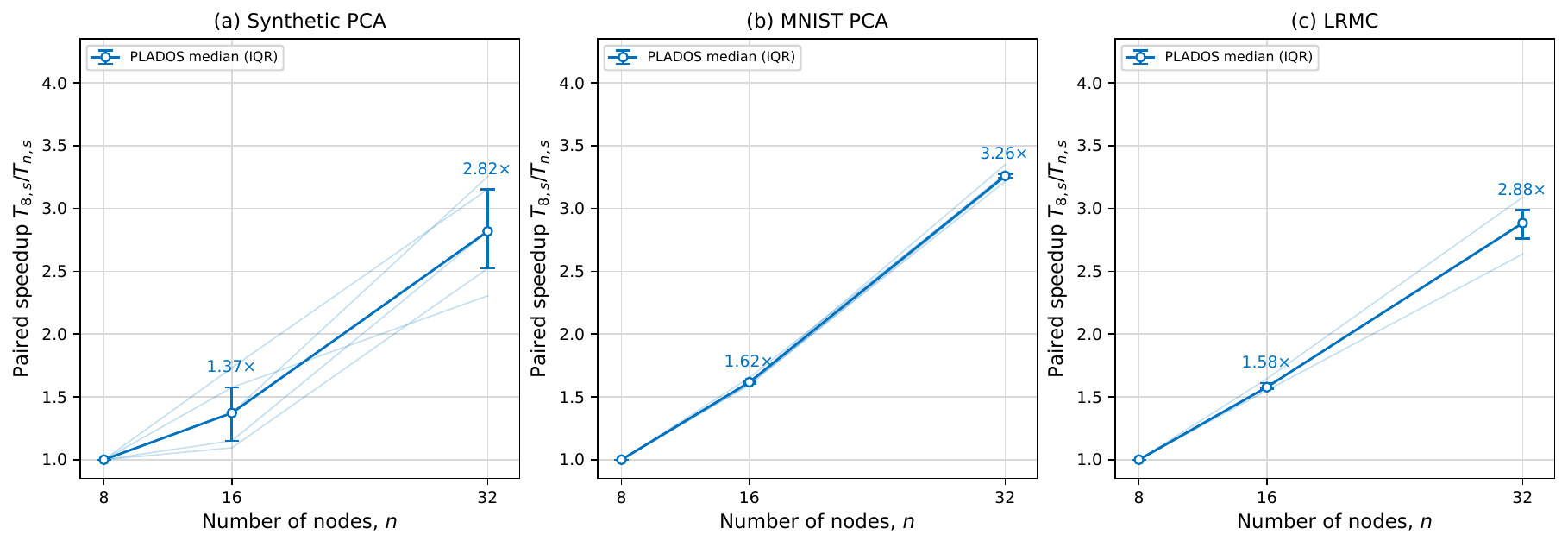}
 \caption{Paired time-to-accuracy speedup relative to 8 nodes, at
 \(d_s\le0.01\) for PCA and \(\|\grad f(\bar X_k)\|_F\le10^{-3}\) for LRMC.
 Faint lines show individual runs; thick lines and numeric
 labels show medians, and error bars show IQR.}
 \label{fig:paired-speedup}
\end{figure}

\subsection{MNIST PCA with heterogeneous partitions}
\label{app:heterogeneity}

We adapt the label-partitioned fraction of \citet{Hsieh2020NonIID} to MNIST
PCA. A fixed balanced pool contains 50000 training images, 5000 per digit.
Images are vectorized in \(\R^p\) with \(p=784\) and divided by 255 without
centering; we set \(r=5\). This pool and its global empirical reference subspace
are fixed across partitions, keeping the global optimization problem unchanged.

Images are randomly ordered within each digit, concatenated in label order,
and split into 8 blocks of 6250. Each block is shuffled once. At level
\(\rho\), node \(i\) retains its first \(\lfloor6250\rho\rfloor\) images;
the remaining images are pooled, randomly permuted, and redistributed
equally. Thus every node always has 6250 images and the global pool is
preserved. We use \(\rho\in\{0,0.2,0.4,0.6,0.8,1\}\): \(\rho=0\) gives random
assignment and \(\rho=1\) gives label-sorted blocks, with labels potentially
shared across nodes. The retained fraction \(\rho\) is a partition
parameter, not the theoretical bound \(\xi\); even a random finite
partition can have nonzero gradient heterogeneity.

Each node caches \(C_i=A_i^\top A_i/6250\) in double precision and uses the
exact empirical gradient \(-C_iX_{i,k}\), so \(\sigma=0\). The graph is an
8-node ring with Metropolis weights, \(\gamma=1/3\), and the same actual
stepsize \(\alpha=7\times10^{-4}\) for every \(\rho\). This value was chosen
in exploratory runs before the 15000-update comparison. Within each trial,
all nodes and partition levels share the same Gaussian polar
initialization. All 8 nodes are simulated in 1 process.

Each epoch consists of 1 full-gradient update. All 18 runs complete 15000 epochs,
with diagnostics recorded at every update. Figure~\ref{fig:heterogeneity}
uses pointwise medians without interpolation and the same 4000--10000
window for all curves and both metrics. Larger data heterogeneity generally
yields larger late-run gradient norms and subspace distances. The full runs
exhibit nonzero plateaus, consistent with the fixed-step residual in
\eqref{eq:constant-step}: for \(\sigma=0\),
\[
 \frac1K\sum_{k<K}\E\norm{\grad f(\bar X_k)}_F^2
 \le \frac{C_0\mathcal E_0}{\alpha K}+C_2n\alpha^2\xi^2.
\]
This bound concerns average squared stationarity. The observed plateaus
illustrate convergence to a neighborhood at the fixed stepsize.

\end{document}